%-----------------------------------------------------------------------
% Beginning of article-template.tex
%-----------------------------------------------------------------------
%
%    This is a template file for proceedings articles prepared with AMS
%    author packages, for use with AMS-LaTeX.
%
%    Templates for various common text, math and figure elements are
%    given following the \end{document} line.
%
%%%%%%%%%%%%%%%%%%%%%%%%%%%%%%%%%%%%%%%%%%%%%%%%%%%%%%%%%%%%%%%%%%%%%%%%

%    Remove any commented or uncommented macros you do not use.

%    Replace amsproc by the name of the author package.
\documentclass[11pt]{amsart}

%    If you need symbols beyond the basic set, uncomment this command.
%\usepackage{amssymb}

%    If your article includes graphics, uncomment this command.
%\usepackage{graphicx}

 % If the article includes commutative diagrams, ...
 \usepackage[cmtip,all]{xy}

%    Include other referenced packages here.

\usepackage{amsmath,amsthm,indentfirst}
\usepackage{amssymb}
\usepackage{amsfonts}
\usepackage{amscd}
\usepackage[latin1]{inputenc}
\usepackage{hyperref}
\usepackage{ifthen, amsfonts, amssymb, graphicx, srcltx, mathrsfs, xfrac,
amsmath}
\usepackage{enumerate}
\usepackage{pinlabel}
\input xy 
\xyoption{all}
\usepackage{xcolor}
 \usepackage{xspace,xcolor}
 \usepackage{graphicx}
 \usepackage{tikz}
 
\usepackage{soul}

\theoremstyle{plain}
\newtheorem*{maintheorem*}{Main Theorem}

\newtheorem*{thmd*}{Theorem 1.5}
\newtheorem*{thme*}{Theorem 1.6}
\newtheorem*{remark*}{Remark}
\newtheorem*{conjecture*}{Conjecture}
\newtheorem*{prop*}{Proposition}
\newtheorem{thm}{Theorem}[section]
\newtheorem{cor}[thm]{Corollary}
\newtheorem{lem}[thm]{Lemma}
\newtheorem{prop}[thm]{Proposition}

\theoremstyle{definition}

\newtheorem*{proofc*}{Proof of Theorem C}

\newtheorem{remark}[thm]{Remark}
\newtheorem{notation}[thm]{Notation}

\DeclareMathOperator{\CAT}{CAT}
\DeclareMathOperator{\Teich}{Teich}
\DeclareMathOperator{\Mod}{Mod}

\DeclareMathOperator{\WP}{WP}
\DeclareMathOperator{\base}{base}
\DeclareMathOperator{\tw}{tw}

\DeclareMathOperator{\inj}{inj}

\DeclareMathOperator{\diam}{diam}

  \newcommand{\asym}{\stackrel{{}_\ast}{\asymp}}
  \newcommand{\gmul}{\stackrel{{}_\ast}{\succ}}
  \newcommand{\lmul}{\stackrel{{}_\ast}{\prec}}
  \newcommand{\asya}{\stackrel{{}_+}{\asymp}}
  
  \newcommand{\ladd}{\stackrel{{}_+}{\prec}}

\newcommand{\ML}{\mathcal{ML}}
\newcommand{\PML}{\mathcal{PML}}

\newcommand{\cC}{\mathcal C}

\newcommand{\cS}{\mathcal S}
\newcommand{\cT}{\mathcal T}

\newcommand{\I}{\text{i}}
 
\newcommand{\ep}{\epsilon} 
\newcommand{\T}{Teichm\"{u}ller} 
\newcommand{\oT}[1]{\overline{\Teich(#1)}}

\numberwithin{equation}{section}

\begin{document}

% \title[short text for running head]{full title}

\title[Limit sets of WP geodesics]{Limit sets of Weil-Petersson geodesics with nonminimal ending laminations}
%    Only \author and \address are required; other information is
%    optional.  Remove any unused author tags.

\date{\today}

%    author one information
%\author[short name for running head]{full name for first page}

\author[Jeff Brock]{Jeffrey Brock}
\address{Department of Mathematics, Brown University, Providence, RI, }
\email{brock@math.brown.edu}

\author[Chris Leininger]{Christopher Leininger}
\address{ Department of Mathematics, University of Illinois, 1409 W Green ST, Urbana, IL}
\email{clein@math.uiuc.edu}

\author[Babak Modami]{Babak Modami}
\address{ Department of Mathematics, Yale University, 10 Hillhouse Ave, New Haven, CT}
\email{babak.modami@yale.edu}

\author[Kasra Rafi]{Kasra Rafi}
\address{Department of Mathematics, University of Toronto, Toronto, ON }
\email{rafi@math.toronto.edu}

\thanks{The first author was partially supported by NSF grant DMS-1207572, the second author by NSF grant DMS-1510034, the third by NSF grant DMS-1065872, and the fourth
author by NSERC grant \# 435885.}

\subjclass[2010]{Primary 32G15, Secondary 37D40} 
%    The 2010 edition of the Mathematics Subject Classification is
%    now available.  If you are citing a classification from the
%    new scheme, use the following input coding instead.
%\subjclass[2010]{Primary }

%\keywords{ }

%%%%%%%%%%%%%%%%%%%%%%%%%%%%%% Abstract %%%%%%%%%%%%%%%%%%%%%%%%%%%%%%%%%

\begin{abstract}
In this paper we construct examples of Weil-Petersson geodesics with nonminimal ending laminations which have $1$--dimensional limit sets in the Thurston compactification of \T\ space. 

\end{abstract}

%%%%%%%%%%%%%%%%%%%%%%%%%%%%%%%%table of contents%%%%%%%%%%%%%%%%%%%%%%%%%%%
\maketitle

%\setcounter{tocdepth}{1}
%\tableofcontents

%%%%%%%%%%%%%%%%%%%%%%%%%
%%%%%%%%%%%%%%%%%%%%%%%%%%%%%% Introduction %%%%%%%%%%%%%%%%%%%%%%%%%%%%%
\section{Introduction}
%%%%%%%%%%%%%%%%%%%%%%%%%
%%%%%%%%%%%%%%%%%%%%%%%%%

A number of authors have studied the limiting behavior of Teichm\"uller geodesics in relation to the Thurston compactification of Teichm\"uller space, \cite{2bdriesteich,Kerck-asymp-teich} \cite{lenzhen,lenmasdivergence},\cite{nonuniqueerg,nuechaikaetl},\cite{nue2,2dlimit}.  This work has highlighted the delicate relationship between the vertical foliation of the quadratic differential defining the geodesic and the limit set in the Thurston boundary.

The {\it ending lamination} of a Weil-Petersson (WP) geodesic ray was introduced by Brock, Masur and Minsky 
in \cite{bmm1} and in some sense serves as a rough analogue of the vertical foliation of the quadratic differential defining a Teichm\"uller geodesic ray.
Ending laminations have been used to study the behavior of WP geodesics \cite{bmm1,bmm2,wpbehavior, asympdiv, wplimit} 
and dynamics of the WP geodesic flow on moduli spaces \cite{bmm2,wprecurnue,Hamen-teich-wp}. 
In this paper, complementing our work in \cite{wplimit}, we provide examples of WP geodesic rays with non minimal, 
and hence nonuniquely ergodic, ending laminations whose limit sets in the Thurston compactification of \T\ space is larger than a single point.
\medskip

\noindent{\bf Theorem~\ref{thm:main}.} {\em There exist Weil-Petersson geodesic rays with nonminimal, nonuniquely ergodic ending laminations whose limit set 
in the Thurston compactification of \T\ space is $1$--dimensional.}
\medskip

See also Theorem~\ref{thm : limit of r} for a more precise statement.  Our construction closely follows that of Lenzhen \cite{lenzhen} who gave the first examples of Teichm\"uller geodesics having $1$--dimensional limit sets in the Thurston compactification.

\pagebreak

%%%%%%%%%%%%%%%%%%%%%%%%%
%%%%%%%%%%%%%%%%%%%%%%%%%
\section{Preliminaries}\label{sec:prelim}
%%%%%%%%%%%%%%%%%%%%%%%%%
%%%%%%%%%%%%%%%%%%%%%%%%%

\begin{notation}
Let $K\geq 1$, $C\geq 0$, and let $X$ be any set.  
For two functions $f,g:X\to[0,\infty)$ we write $f\asymp_{K,C}g$ if $\frac{1}{K}g(x)-C\leq f(x)\leq Kg(x)+C$ for all $x\in X$. 
Similarly, we write $f\asym_K g$ if $\frac{1}{K}g(x)\leq f(x)\leq Kg(x)$ for all $x\in X$, and $f\asya_C g$ if $g(x)-C\leq f(x)\leq g(x)+C$ for all $x\in X$. 
Moreover, $f\lmul_K g$ means that $f(x)\leq Kg(x)$ for all $x\in X$ and $f\ladd_C g$ means that $f(x)\leq g(x)+C$ for all $x\in X$.
We drop $K,C$ from the notation when the constants are understood from the context. 
\end{notation}

\noindent{\bf \T\ space.} Given a finite type surface $S$, we denote its Teichm\"uller space by $\Teich(S)$.  The points in $\Teich(S)$ are isotopy classes of (finite type) Riemann surface structures on $S$.  When the Euler characteristic $\chi(S) < 0$, we also view $X \in \Teich(S)$ as an isotopy class of complete, finite area, hyperbolic metric on $S$.  In this case, given a homotopy class of closed curve $\alpha$ and $X \in \Teich(S)$, we write $\ell_\alpha(X)$ for the length of the $X$--geodesic representative of $\alpha$.  If $\alpha$ is simple, we let $w_\alpha(X)$ denote the {\em width} of $\alpha$ in $X$, defined by
\begin{equation} \label{Eq:collar width}
w_\alpha(X) = 2\sinh^{-1} ( 1/ \sinh(\ell_\alpha(X)/2)).
\end{equation}
The term `width' is justified by the following, see e.g.~\cite[\S 4]{buser}.
\begin{lem}[Collar Lemma] \label{L:collar}  Given any $X \in \Teich(S)$ and distinct homotopy classes of disjoint simple closed curves $\alpha_1,\alpha_2$, let $w_i = w_{\alpha_i}(X)$, for $i=1,2$.  Then $N_{w_1/2}(\alpha_1)$ and $N_{w_2/2}(\alpha_2)$, the $w_i/2$--neighborhoods of the $X$--geodesic representative of the $\alpha_i$, are pairwise disjoint, embedded annuli.
\end{lem}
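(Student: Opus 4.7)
The plan is to pass to the universal cover $\mathbb{H}^2 \to X$ and bound distances between lifts of the geodesic representatives of $\alpha_1$ and $\alpha_2$ using hyperbolic trigonometry. Write $\ell_i = \ell_{\alpha_i}(X)$, and let $\tilde\alpha_i \subset \mathbb{H}^2$ be a chosen lift of the $X$-geodesic representative of $\alpha_i$, stabilized by the deck transformation $T_i \in \pi_1(X)$ with axis $\tilde\alpha_i$ and translation length $\ell_i$. Note that by \eqref{Eq:collar width}, $w_i/2 = \sinh^{-1}(1/\sinh(\ell_i/2))$, which is the standard Buser collar radius. The argument splits into two steps, corresponding to embeddedness of each collar and disjointness of the pair.

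First, I would show that each open $w_i/2$-neighborhood of $\alpha_i$ in $X$ is an embedded annulus. Since $\alpha_i$ is simple, any two distinct lifts of $\alpha_i$ in $\mathbb{H}^2$ are disjoint geodesics (two crossing lifts would project to a self-intersection of $\alpha_i$). Applying hyperbolic trigonometry to the Saccheri quadrilateral formed by the common perpendicular between two distinct lifts together with its translate by $T_i$, one obtains $\sinh(\ell_i/2)\sinh(d/2)\geq 1$, equivalently $d\geq w_i$, where $d$ is the distance between the two lifts. Hence the $w_i/2$-neighborhood of $\tilde\alpha_i$ is disjoint from its translates by elements of $\pi_1(X)\setminus\langle T_i\rangle$ and descends to an embedded annulus.

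Second, I would show that for any lift $\tilde\alpha_1$ of $\alpha_1$ and any lift $\tilde\alpha_2$ of $\alpha_2$ one has $d(\tilde\alpha_1,\tilde\alpha_2)\geq (w_1+w_2)/2$. Since $\alpha_1$ and $\alpha_2$ are disjoint simple curves in $X$, these lifts are disjoint geodesics in $\mathbb{H}^2$. A similar but more intricate trigonometric calculation, applied now to the quadrilateral assembled from the common perpendicular of $\tilde\alpha_1$ and $\tilde\alpha_2$ together with its images under $T_1$ and $T_2$, yields the desired inequality. Combined with the embeddedness of Step~1, this forces the two annular neighborhoods $N_{w_i/2}(\alpha_i)$ to be disjoint in $X$.

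The main obstacle is the trigonometric inequality in Step~2: the single-curve argument produces each $\alpha_i$'s own embedded collar of width $w_i$ but does not by itself force the two half-width collars to avoid one another, and additivity of $w_1/2$ and $w_2/2$ requires a joint analysis involving both generators $T_1$ and $T_2$ in the same quadrilateral. This is precisely the content of the classical Collar Lemma as carried out in Buser \cite{buser} (Chapter 4), which I would invoke rather than reproduce in detail.
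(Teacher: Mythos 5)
The paper gives no proof of this lemma at all---it is the classical Collar Lemma and is simply cited to \cite[\S 4]{buser}, with the half-width $w_\alpha(X)/2=\sinh^{-1}(1/\sinh(\ell_\alpha(X)/2))$ being exactly Buser's collar radius. Your sketch is a correct outline of that standard universal-cover/hyperbolic-trigonometry argument and, like the paper, ultimately defers the key computation to Buser, so it is essentially the same approach.
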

For $w = w_\alpha(X)$, we call $N_{w/2}(\alpha)$, the {\em standard collar}, and note that the distance inside $N_{w/2}(\alpha)$ between the boundary components is $w$.  An important consequence is that for any other homotopy class of curve $\beta$, we have $\ell_\beta(X) \geq i(\alpha,\beta) w_\alpha(X)$, where $i(\alpha,\beta)$ is the geometric intersection number of $\alpha$ and $\beta$ (c.f.~Theorem~\ref{thm:combinatorial length} below).
\medskip

\noindent{\bf Weil-Petersson metric.} When $\chi(S) < 0$, the Weil-Petersson (WP) metric is a negatively curved, incomplete, geodesically convex, Riemannian metric on $\Teich(S)$.
Its completion, $\oT{S}$, is a stratified $\CAT(0)$ space, with a stratum $\cS(\sigma)$ for each (possibly empty isotopy class of) multicurve $\sigma$, consisting of appropriately marked Riemann surfaces pinched precisely along $\sigma$.
The stratum $\cS(\sigma)$ is totally geodesic and isometric to the product of the \T\ spaces of the connected components of $S\backslash \sigma$ with their WP metric.  
The completion of $\cS(\sigma)$ is the union of all strata $\cS(\sigma')$ for which $\sigma \subset \sigma'$; see \cite{maswp}.   
The stratification has the so called {\it non-refraction property}:  the interior of a geodesic segment with end points in two strata $\cS(\sigma_1)$ and $\cS(\sigma_2)$ lies in the stratum $\cS(\sigma_1\cap\sigma_2)$; see \cite{dwwp,wolb}.\\

%\bm{this part is added about visual boundary}
 % \cl{fix this def}
%The {\em visual sphere} of \T\ space equipped with the WP metric at a point $X$ is the set of equivalence classes of infinite
% geodesic rays starting at the point $X$ where two rays are equivalent iff the rays are asymptotic. More precisely, two rays $r,r':[0,\infty)\to\Teich(S)$ are equivalent iff $d_{\WP}(r(t),r'(t))$ is uniformly bounded. Further we may equip the WP visual sphere at $X$ with the topology that a sequence of rays $r_n$ converges to a ray $r$ if for sequences $R_n\to\infty$ and $\ep_n\to 0$, $d_{\WP}(r_n(t),r(t))\leq \ep_n$ for all $t\in [0,R_n)$; see \cite[Chapter II.8]{bhnpc}. Note that since WP metric is not complete the visual spheres at different points are not necessarily homeomorphic \cite{Brock-Wpvisualsphere}.
% \medskip

\noindent{\bf Curve complexes, markings, and projections.}
We refer the reader to \cite{mm1,mm2} for definitions of the objects described in this subsection---our objective here is to fix notation and terminology.
 In this paper we denote the {\em curve complex} of a subsurface $Y$ by $\cC(Y)$. 
 The set of vertices of $\cC(Y)$, denoted by $\cC_0(Y)$, is the set of curves on $Y$ (more precisely, the set of isotopy classes of essential simple closed curves on $Y$). 
 A {\em partial marking} $\mu$ on $S$ consists of a pants decomposition, $\base(\mu)$, and a transversal for some curves in $\base(\mu)$.  A {\em marking} is a partial marking such that every curve in $\base(\mu)$ has a transversal.  
 For a curve or partial marking $\mu$, we denote the {\em subsurface projection of $\mu$} to the subsurface $Y$ by $\pi_Y(\mu)$ (see \cite[\S 2]{mm2}), and for two $\mu,\mu'$ define
\begin{equation}
d_Y(\mu,\mu'):=\diam_{\cC(Y)}\Big(\pi_Y(\mu)\cup\pi_Y(\mu')\Big).
\end{equation}
An important property of $d_Y$ is that it satisfies the triangle inequality when the associated projections are nonempty.
If $Y$ is an annulus with core curve $\alpha$, we also write $\cC(\alpha)$ for $\cC(Y)$, $\pi_\alpha$ for $\pi_Y$, and
$d_\alpha(\mu,\mu')$ for $d_Y(\mu,\mu')$; see again \cite[\S 2]{mm2}. 

There exists a constant $L_S > 0$, called the {\em Bers constant}, depending on $S$, 
such that for any $X \in \Teich(S)$ there is a pants decomposition such that every curve in the pants decomposition
 has hyperbolic length at most $L_S$ with respect to $X$; see e.g.~\cite{buser}.  Such a pants decomposition is called a {\em Bers pants decomposition} for $X$.  
A {\em Bers curve} for $X$ is a curve $\alpha$ for which $\ell_\alpha(X)\leq L_S$.  
A  {\em Bers marking} for $X$ is a marking $\mu$ such that $\base(\mu)$ is a Bers pants decomposition for $X$
and transversal curves have minimal lengths.

Given a point $X\in\Teich(S)$ and a curve $\alpha$, the {\em subsurface projection of $X$} to $\alpha$, $\pi_\alpha(X)$, is the collection of all geodesic arcs in the annular cover corresponding to $\alpha$ which are orthogonal to the geodesic representative of $\alpha$ (all with respect to the pull-back of the $X$--metric on $S$ to the cover).  Distance in $\alpha$ between points of $\Teich(S)$ and curves/markings is defined as the diameter of the union of their projections (as with the case of two curves or markings).  This is often called the {\em relative twisting}, and for $\alpha,\delta \in \cC_0(S)$ and $X \in \Teich(S)$, we write
\[ \tw_\alpha(\delta,X) = d_\alpha(\delta,X) := d_\alpha(\delta,\pi_\alpha(X)).\]
If $\alpha$ has bounded length and $\mu$ is a bounded length marking for $X$, then 
\begin{equation}\label{eq:twist-marking}
 \tw_\alpha(\delta,X) \asya d_\alpha(\delta,\mu),
\end{equation}
where the additive error depends on the bounds on the length of $\alpha$ and the lengths of those curves in $\mu$ (including those defining transversals of $\mu$) which intersect $\alpha$, but not on the length of $\delta$.  To see this, note that the bounds on all the lengths of curves mentioned implies a lower bound on the length of $\alpha$ by Lemma~\ref{L:collar} and a lower bound on the angle of intersection between the geodesic representatives of any curve from $\mu$ and the geodesic representative of $\alpha$, and these easily imply an upper bound $d_\alpha(\mu,X)$.
Coarse Equation (\ref{eq:twist-marking}) then follows from the triangle inequality.

The next theorem is a consequence of \cite[Lemma 3.1]{Shadow} 
(see also \cite[Lemmas 7.2 and 7.3]{lineminimateichgeod}), and provides an estimate 
on length of a curve $\gamma$ with respect to $X \in \Teich(S)$ in terms of contributions 
from certain other curves which $\gamma$ intersects.  To describe it, suppose 
$X \in \Teich(S)$ and $\gamma,\delta$ are two curves on $S$, and define
\begin{equation} \label{Eq:contribution delta gamma X}
\ell_\delta(\gamma,X)=\I(\delta,\gamma)
\Big(w_{\gamma}(X)+\ell_\gamma(X)\tw_\gamma(\delta,X)\Big).
\end{equation}
Also, for a pants decomposition $P$, define 
\[
\I(\delta, P) = \sum_{\gamma\in P} \I(\delta,\gamma).
\]

\begin{thm}\label{thm:combinatorial length}
For any $L>0$ there exists $K>0$ so that the following holds. Let 
$X \in \Teich(S)$ and $P$ is a pants decomposition of $S$ with $\ell_\gamma(X)\leq L$
for all $\gamma\in P$. Then for any curve $\delta \in \cC_0(S)$, $\delta \not \in P$,
we have 
\[
\Big| \ell_\delta(X)-\sum_{\gamma\in P} \ell_\delta(\gamma,X)\Big|
\lmul_K \I(\delta,P)
\]
\end{thm}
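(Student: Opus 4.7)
The plan is to decompose the $X$--geodesic representative of $\delta$ according to its intersection with the standard collars of the curves of $P$, estimate the length contributions of the collar and non-collar pieces separately, and sum.

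First, I would set $N_\gamma = N_{w_\gamma(X)/2}(\gamma)$ for each $\gamma \in P$; by Lemma~\ref{L:collar} these are pairwise disjoint embedded annuli. Since $\delta \notin P$ and the geodesic representative of $\delta$ realizes the minimum intersection with each $\gamma$, the geodesic $\delta$ crosses $N_\gamma$ in exactly $\I(\delta,\gamma)$ essential arcs, each joining the two boundary components of $N_\gamma$. The complement $\delta \setminus \bigcup_{\gamma\in P} N_\gamma$ is a disjoint union of at most $\I(\delta,P)$ arcs, each contained in the $L$--thick part $X \setminus \bigcup_{\gamma}N_\gamma$. Because every $\gamma\in P$ has $\ell_\gamma(X)\leq L$, the Collar Lemma provides a uniform lower bound on the injectivity radius at every point outside the standard collars, so the complementary pairs of pants (cut along the collar boundaries) have diameter bounded by a constant $C(L)$. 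Hence the total length of the non-collar arcs is at most $C(L)\cdot \I(\delta,P)$.

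Next, I would analyze the collar contribution for a fixed $\gamma \in P$. Each essential arc $\alpha$ crossing $N_\gamma$ lifts to a geodesic arc in the annular cover $\widetilde{N}_\gamma$ associated to $\gamma$; using the explicit model of the hyperbolic collar (a quotient of a neighborhood of a geodesic in $\mathbb{H}^2$ by a translation of length $\ell_\gamma(X)$) one computes directly
\[
\length(\alpha) \asya w_\gamma(X) + \ell_\gamma(X)\,n_\alpha,
\]
where $n_\alpha$ is the absolute twisting number of $\alpha$—i.e.\ the number of $\ell_\gamma(X)$--translates needed for the endpoints of a lift of $\alpha$ to be "vertically aligned" across the core. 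The $\I(\delta,\gamma)$ crossing arcs all have twisting numbers which differ by at most $1$, and this common value equals $\tw_\gamma(\delta,X)$ up to a universal additive constant, since by definition the annular projection $\pi_\gamma(\delta)$ is precisely the collection of these lifted arcs and $\pi_\gamma(X)$ is given by the lifts of the perpendiculars to $\gamma$. Summing over the crossings of $N_\gamma$ yields a total length of
\[
\I(\delta,\gamma)\Big(w_\gamma(X) + \ell_\gamma(X)\tw_\gamma(\delta,X)\Big) + O\!\Big(\I(\delta,\gamma)(1+\ell_\gamma(X))\Big) = \ell_\delta(\gamma,X) + O(\I(\delta,\gamma)),
\]
where we have absorbed $\ell_\gamma(X)\leq L$ into the implicit constant.

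Finally, summing the collar contributions over $\gamma\in P$ and adding the thick-part bound gives
\[
\Big|\ell_\delta(X) - \sum_{\gamma\in P}\ell_\delta(\gamma,X)\Big| \lmul_K \I(\delta,P),
\]
for a constant $K$ depending only on $L$. The main obstacle is the identification carried out in the second paragraph: one must show that the integer twist $n_\alpha$ of every arc crossing $N_\gamma$ agrees with the annular distance $\tw_\gamma(\delta,X) = d_\gamma(\delta,\pi_\gamma(X))$ up to a bounded error, independently of $\ell_\gamma(X)$ and $\tw_\gamma(\delta,X)$. This is exactly where one invokes \cite[Lemma~3.1]{Shadow} (or the corresponding statements in \cite[Lemmas 7.2, 7.3]{lineminimateichgeod}); everything else reduces to elementary hyperbolic trigonometry in the collar together with the uniform thick-part geometry coming from $\ell_\gamma(X)\leq L$.
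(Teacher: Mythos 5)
Your proposal is correct and follows essentially the same route as the paper: decompose the geodesic representative of $\delta$ along the standard collars of the curves in $P$, bound each of the $\I(\delta,P)$ thick-part segments by a constant depending only on $L$, and estimate each collar-crossing arc by $w_\gamma(X)+\ell_\gamma(X)\tw_\gamma(\delta,X)$ up to a bounded additive error, which is precisely \cite[Lemma 3.1]{Shadow}. The only cosmetic difference is that you sketch the explicit collar computation before deferring to that lemma for the identification of the twisting number, whereas the paper cites parts (a) and (b) of it directly.
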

\begin{proof}
For every $\gamma \in P$, let $N(\gamma) = N_{w_\gamma(X)/2}(\gamma)$ be the standard collar around $\gamma$, 
 where $w_\gamma(X)$ is the width as in (\ref{Eq:collar width}).
By Lemma~\ref{L:collar}, these collars are embedded and pairwise disjoint. 
Every complementary component $Q$ of this set of standard collars is topologically 
a pair of pants but does not have a geodesic boundary. The decomposition of $X$
into standard collars and complementary components decomposes $\delta$ into 
segments.  Then \cite[Lemma 3.1, part (b)]{Shadow} implies that for any segment $u$ 
that is associated to a standard collar $N(\gamma)$ we have, 
\[
\ell_u(X) \asya_C w_{\gamma}(X)+\ell_\gamma(X)\tw_\gamma(\delta,X)
\]
for some constant $C$ depending on $L$. (The language in 
\cite[Lemma 3.1]{Shadow} is slightly different because it also applies to segments in
possibly infinite geodesics.) That is, for some constant $K_1$, we have 
\[
\Big| \sum_u \ell_u(X) -\sum_{\gamma\in P} \ell_\delta(\gamma,X)\Big|
\lmul_{K_1} \I(\delta,P). 
\]
But the difference between $\ell_\delta(X)$ and $\sum_u \ell_u(X) $
is the sum of the lengths of segments in complementary pieces. 
Now we note that \cite[Lemma 3.1, part (a)]{Shadow} states that the length of each such segment 
is uniformly bounded. Also, the number of such segments is 
$\I(\delta,P)$. Thus, for some $K_2$, 
\[
\Big| \ell_\delta(X) - \sum_u \ell_u(X) \Big| 
\lmul_{K_2} \I(\delta,P). 
\]
Now, setting $K=K_1+K_2$, the theorem follows from above two inequalities and
the triangle inequality. 
  \end{proof}
\medskip

\noindent{\bf The Thurston compactification.}
The {\em Thurston boundary of the \T\ space} is the space of projective classes of measured laminations $\mathcal{PML}(S)$; 
see \cite{FLP}. A sequence of points $\{X_k\} \subset \Teich(S)$ exiting every compact set of $\Teich(S)$ converges to $[\bar{\lambda}]$, the projective class of a measured lamination $\bar{\lambda} \in \ML(S)$, 
if there exists a sequence of positive real numbers $\{u_k\}_k$ so that
\begin{equation} \label{EQ:Thurston limit definition}
\lim_{k\to\infty}u_k\ell_{\delta}(X_k)=\I(\delta,\bar\lambda),
\end{equation}
for every $\delta \in \cC_0(S)$.  
We call $\{u_k\}_k$ a {\em scaling sequence} for $\{X_k\}_k$, and note that $u_k \to 0$.
In fact, a finite set of curves $\delta_1,\ldots,\delta_n$ can be chosen so that for any sequence $\{X_k\}$ exiting every compact subset of $\Teich(S)$, we have
$X_k \to [\bar \lambda]$ if and only if (\ref{EQ:Thurston limit definition}) holds for some scaling sequence $\{u_k\}_k$ and the curves $\delta = \delta_i$, for each $i = 1,\ldots,n$.
To see this, we let $\delta_1,\ldots,\delta_n$ consist of a pants decomposition together with a pair of transverse curves for each pants curve.  Then any measured foliation/lamination is determined by these intersection numbers (indeed, the intersection numbers with the transverse curves suffice to determine the twisting parameters with for the foliation, and hence the foliation; see \cite[Expos\'e 6]{FLP}).  Therefore, if (\ref{EQ:Thurston limit definition}) holds for some $\bar \lambda$, some $\{u_k\}$, and $\delta = \delta_i$, for all $i = 1,\ldots,n$, then all accumulation points of $\{X_k\}$ agree (as they are determined by these intersection numbers), and hence $\{X_k\}$ converges to $[\bar \lambda]$.  In particular, for any curve $\alpha$, we can choose the curves $\delta_1,\ldots,\delta_n$ to all have nonzero intersection number with $\alpha$.
\medskip

\noindent{\bf Ending lamination.} Suppose $r \colon [0,\infty) \to \Teich(S)$ is an infinite WP geodesic ray.  A {\em pinching curve} for $r$  is a curve $\gamma$ with $\lim_{t \to \infty} \ell_\gamma(r(t))=0$.  The {\em (forward) ending lamination of $r$}, denoted by $\nu^+ = \nu^+(r)$, is the union of the pinching curves together with the supports of any accumulation points in $\PML(S)$ of an infinite sequence of distinct Bers curves for hyperbolic metrics along $r([0,\infty)$; see \cite[Definition 2.7]{bmm1} for more details.
 
 \subsection{Bounded length WP geodesic segments}
Because of the non-completeness of the Weil-Petersson metric and the non-local-compactness of its completion, the
usual compactness theorems for geodesic segments of fixed length
based at a point is more subtle than in the complete case. Wolpert
carried out an initial analysis \cite[Proposition 23]{wols} that captured how such
segments can limit at the completion, but further analysis in
\cite[Theorem 4.2]{wpbehavior} captures a stronger non-refraction condition.

Given a curve $\gamma\in \cC(S)$ we denote the positive Dehn twist
about $\gamma$ by $D_{\gamma}$.  
For a multicurve $\sigma$ on a surface $S$ we denote the subgroup of
$\Mod(S)$ generated by positive Dehn twists about the curves in
$\sigma$ by $\tw(\sigma)$.

%\cl{Since we're only using the geodesic limit theorem in the case where the geodesics all start at the same place, can we simplify the statement accordingly? I.e.~the remarking mapping classes can be simplified a little.}
%\bm{I still prefer the non simplified form.}

\begin{thm}\textnormal{(Geodesic limit)}\label{thm : geodesic limit}
Given $T>0$, let  $\zeta_{n}:[0,T]\to \Teich(S)$ be a sequence of
Weil-Petersson  geodesic segments parametrized by arclength with
$\zeta_n(0) = X \in \Teich(S)$. Then
after passing to a subsequence, we may extract 
 a partition of the interval $[0,T]$ by $0=t_0<t_1<\ldots<t_{k+1}=T$,
 multicurves $\sigma_l, \; l=1,\ldots,k+1$, 
with 
$\sigma_l\cap\sigma_{l+1}=\emptyset$ for $l=1,\ldots, k$
 and a piecewise geodesic segment 
\[\hat{\zeta}:[0,T]\to\oT{S}\]
with  $\hat{\zeta}(t_l)\in\cS(\sigma_l)$ for $ l=1,\ldots,k+1,$
and $\hat{\zeta}((t_{l},t_{l+1}))\subset\Teich(S)$ for $ l=0,\ldots,k,$ 
such that the following hold:
\begin{enumerate}
  \item 
$\lim_{n\to\infty}\zeta_n(t)=\hat{\zeta}(t)$
 for all $t\in[t_0,t_1]$,
\item  
there exist elements $\mathcal{T}_{l,n}\in\tw(\sigma_l)$ 
% such that for
% any $\gamma\in\sigma_l$ 
% the power of $D_{\gamma}$ in $\mathcal{T}_{l,n}$ is unbounded as
% $n\to\infty$, and 
for each $l=1,\ldots, k$ and $n\in \mathbb{N}$, so that letting
$\varphi_{l,n}=\mathcal{T}_{l,n}\circ \ldots\circ \mathcal{T}_{1,n}$
we have
\[\lim_{n\to\infty}\varphi_{l,n}(\zeta_{n}(t))=\hat{\zeta}(t)\] 
 for all $t\in [t_l,t_{l+1}]$.
 \end{enumerate}
\end{thm}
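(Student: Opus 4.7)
The plan is to construct the partition $0 = t_0 < \cdots < t_{k+1} = T$, the multicurves $\sigma_l$, the twist elements $\mathcal{T}_{l,n}$, and the piecewise geodesic $\hat\zeta$ inductively, using iterated subsequential compactness in the completion. Each $\zeta_n$ is $1$-Lipschitz from $[0,T]$ into the CAT(0) space $\oT{S}$ starting at the fixed point $X$, so the family is equicontinuous with image in the closed WP-ball of radius $T$ around $X$. This ball is not compact, but Wolpert's initial compactness analysis \cite{wols} yields, after passing to a subsequence, pointwise (hence uniform) convergence to a geodesic segment $\hat\zeta|_{[0,t_1]}$ on some maximal initial interval $[0,t_1]$. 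By CAT(0) convergence and the maximality of $t_1$, the open piece $\hat\zeta((0,t_1))$ lies in $\Teich(S)$, while $\hat\zeta(t_1) \in \cS(\sigma_1)$ for some (possibly empty) multicurve $\sigma_1$. If $t_1 = T$ we are done with $k=0$.

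If $t_1 < T$, I would continue by correcting for the twisting that causes $\zeta_n|_{[t_1,T]}$ to escape every compact subset of $\Teich(S)$. Since the curves in $\sigma_1$ pinch along $\zeta_n(t_1)$, any element of $\tw(\sigma_1)$ acts on $\zeta_n(t_1)$ with WP displacement tending to $0$, so Dehn twist powers $\mathcal{T}_{1,n}\in \tw(\sigma_1)$ can be chosen to place $\mathcal{T}_{1,n}(\zeta_n(t_1+\epsilon))$ in a bounded region of $\Teich(S)$ for small $\epsilon>0$; this is a fundamental domain argument for the $\tw(\sigma_1)$ action on a neighborhood of $\cS(\sigma_1)$. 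A second application of the compactness theorem to the twisted sequence $\mathcal{T}_{1,n} \circ \zeta_n$ on $[t_1,T]$ then produces, after a further subsequence, a limit on a maximal subsequent subinterval $[t_1,t_2]$ with $\hat\zeta(t_2) \in \cS(\sigma_2)$. The non-refraction property \cite{dwwp,wolb} applied to this limit piece, whose interior lies in $\Teich(S)$ while its endpoints lie in $\cS(\sigma_1)$ and $\cS(\sigma_2)$, forces $\sigma_1 \cap \sigma_2 = \emptyset$.

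Iterating, at step $l+1$ I apply a further twist $\mathcal{T}_{l+1,n} \in \tw(\sigma_{l+1})$ and set $\varphi_{l+1,n} = \mathcal{T}_{l+1,n} \circ \varphi_{l,n}$; a diagonal subsequence delivers simultaneous convergence on all pieces, and the limit pieces assemble into a single CAT(0) geodesic $\hat\zeta\colon[0,T]\to\oT{S}$. The process terminates in finitely many steps because the stratification of $\oT{S}$ has finite combinatorial complexity (bounded by the number of curves in a pants decomposition of $S$), the break points $t_l$ correspond to transverse crossings of positive-codimension strata by $\hat\zeta$, and consecutive strata $\sigma_l,\sigma_{l+1}$ are disjoint; accumulation of break points would, via non-refraction, force $\hat\zeta$ to lie inside a single stratum on a neighborhood of the accumulation time, contradicting the construction of each piece as a geodesic with interior in $\Teich(S)$.

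The delicate step is the extension past a break point: the untwisted sequence $\zeta_n|_{[t_l,T]}$ can wind arbitrarily many times around the pinching curves in $\sigma_l$ as $n\to\infty$, and no subsequence will converge in $\oT{S}$ unless that winding is first absorbed into the twists $\mathcal{T}_{l,n}$. Verifying that the subgroup $\tw(\sigma_l)$ is exactly the correct correction---that the resulting twisted segments are precompact on a neighborhood of the next piece and that the disjointness $\sigma_l\cap\sigma_{l+1}=\emptyset$ persists at every stage---is the strengthened non-refraction content of \cite{wpbehavior}, and supplies precisely what is needed to iterate.
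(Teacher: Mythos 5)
First, a point of comparison: the paper does not actually prove this theorem. It is imported by citation --- the statement is \cite[Theorem 4.2]{wpbehavior}, refining Wolpert's compactness analysis \cite[Proposition 23]{wols} --- so there is no in-paper argument to measure your proposal against. Your outline does reproduce the strategy of those sources: extract a limit on a maximal initial interval, record the pinched multicurve $\sigma_1$, renormalize by elements of $\tw(\sigma_1)$ to restore precompactness, iterate with a diagonal argument, and invoke non-refraction \cite{dwwp,wolb} to obtain $\sigma_l\cap\sigma_{l+1}=\emptyset$. That is the right skeleton.

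As a standalone proof, however, there are genuine gaps. The two hardest steps are asserted rather than proved, and in your final paragraph you explicitly defer their verification to \cite{wpbehavior}, which is the result under proof; as written the argument is circular. Concretely: (a) the initial subsequential convergence cannot come from equicontinuity alone, since closed balls in $\oT{S}$ are not compact --- one needs Wolpert's quantitative expansion of the WP metric near the strata to show that, for fixed $t$, the only obstruction to subconvergence of $\zeta_n(t)$ in $\oT{S}$ is unbounded twisting about the degenerating curves; and (b) the claim that suitable $\mathcal{T}_{1,n}\in\tw(\sigma_1)$ place the continuation in a precompact set is not a routine fundamental-domain argument, because a neighborhood of $\cS(\sigma_1)$ modulo $\tw(\sigma_1)$ is still not locally compact (curves disjoint from $\sigma_1$ may also degenerate, and identifying $\tw(\sigma_l)$ as exactly the right renormalizing group is the substance of the theorem). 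Separately, the termination argument is not correct as stated: $\hat\zeta$ does not ``transversally cross'' strata --- it touches $\cS(\sigma_l)$ at $t_l$ and returns to $\Teich(S)$ on both sides --- the same stratum could a priori be met at infinitely many times, and finiteness of the combinatorics of the stratification does not by itself rule out an accumulation of break points. A correct finiteness argument requires a quantitative input (for instance, lower bounds on the lengths of the geodesic pieces of $\hat\zeta$, or control on how many curves can pinch along a segment of length $T$ issuing from the fixed interior point $X$), which the proposal does not supply.
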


\begin{remark}
In this theorem, $\sigma_{k+1}$ may be empty (in which case we have $\hat \zeta(t_{k+1}) \in \Teich(S)$).
A key feature of this theorem is that $\sigma_l \cap \sigma_{l+1} = \emptyset$, meaning that these two multicurves have no common components.  This is responsible for the non-refraction behavior ensuring that $\hat{\zeta}((t_l,t_{l+1}))$ is contained in $\Teich(S)$ as opposed to $\oT{S}$.
\end{remark}

We also need the following, which is \cite[Corollary 4.10]{wpbehavior}.
Denote a Bers marking at a point $X\in\Teich(S)$ by $\mu(X)$.

\begin{thm}\label{thm : tw-sh} 
Given $\ep_0,T$ positive and $\ep\in (0,\ep_0]$, there is an $N\in\mathbb N$ with the following property.  
Suppose that $\zeta \colon [a,b] \to \Teich(S)$ is a WP geodesic segment of length at most $T$ such that $\sup_{t \in [a,b]} \ell_\alpha(\zeta(t)) \geq \ep_0$ and $d_\alpha(\mu(\zeta(a)),\mu(\zeta(b))) > N$.
Then, we have
\[ \inf_{t \in [a,b]} \ell_\alpha(\zeta(t)) \leq \ep.\]
%Let $\zeta:[0,T']\to \Teich(S)$ be a WP geodesic segment parametrized by arc length of length $T'\leq T$.
%Suppose that $\sup_{t\in[0,T]}\ell_\alpha(\zeta(t))\geq \ep_0$. If $d_\alpha(\mu(\zeta(0)),\mu(\zeta(T)))>N$, then 
%$$\inf_{t\in[0,T]}\ell_\alpha(\zeta(t))\leq \ep.$$
\end{thm}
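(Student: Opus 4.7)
The plan is to argue by contradiction, using the Geodesic Limit Theorem (Theorem~\ref{thm : geodesic limit}) together with Wolpert's $\sqrt{\ell}$--Lipschitz estimate. Assuming no such $N$ exists, we extract a sequence of WP geodesic segments $\zeta_n \colon [0, T_n] \to \Teich(S)$ (parametrized by arclength) of length $T_n \leq T$ such that $\sup_t \ell_\alpha(\zeta_n(t)) \geq \ep_0$, $\inf_t \ell_\alpha(\zeta_n(t)) > \ep$, and $d_\alpha(\mu(\zeta_n(0)), \mu(\zeta_n(T_n))) \to \infty$. I first normalize: combining convexity of $\ell_\alpha$ along WP geodesics with the Wolpert estimate $|\sqrt{\ell_\alpha(\zeta_n(s))} - \sqrt{\ell_\alpha(\zeta_n(t))}| \leq C|s-t|$, one can select a time $s_n \in [0, T_n]$ at which $\ell_\alpha(\zeta_n(s_n))$ lies in a compact interval $[\ep, M]$ depending only on $\ep_0$ and $T$. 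Using finiteness of isotopy classes (modulo $\Mod(S)$) of pairs $(Y,\alpha)$ consisting of $Y$ carrying a Bers marking and $\alpha$ a curve of length in $[\ep, M]$ at $Y$, pass to a subsequence and apply $\phi_n \in \Mod(S)$ so that $\alpha$ becomes a fixed curve independent of $n$ and $\zeta_n(s_n)$ lies in a fixed compact subset of $\Teich(S)$. After reparametrizing, take $s_n = 0$ and pass to a further subsequence so $T_n \to T_\infty \leq T$.

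Next, apply Theorem~\ref{thm : geodesic limit} to obtain the limiting piecewise WP geodesic $\hat\zeta \colon [0, T_\infty] \to \oT{S}$ with partition $0 = t_0 < \cdots < t_{k+1} = T_\infty$, pinching multicurves $\sigma_l$, and twist correctors $\varphi_{l,n} = \mathcal{T}_{l,n} \circ \cdots \circ \mathcal{T}_{1,n}$, where $\mathcal{T}_{j,n} \in \tw(\sigma_j)$ and $\varphi_{l,n}(\zeta_n(t)) \to \hat\zeta(t)$ on $[t_l, t_{l+1}]$. The Wolpert Lipschitz estimate together with the bound on $\ell_\alpha(\zeta_n(0))$ yields a uniform upper bound on $\ell_\alpha(\zeta_n(t))$ over all $n$ and $t$. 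I then prove by induction on $l$ that $\alpha \cap \sigma_l = \emptyset$. The inductive hypothesis gives $\varphi_{l-1,n}(\alpha) = \alpha$, so $\varphi_{l,n}(\alpha) = \mathcal{T}_{l,n}(\alpha)$ satisfies $i(\varphi_{l,n}(\alpha), \gamma) = i(\alpha, \gamma)$ for every $\gamma \in \sigma_l$, since $\mathcal{T}_{l,n}$ is built from Dehn twists about curves in $\sigma_l$. If some $\gamma \in \sigma_l$ met $\alpha$, the collar lemma would give
\[
\ell_\alpha(\zeta_n(t_l)) = \ell_{\varphi_{l,n}(\alpha)}\bigl(\varphi_{l,n}(\zeta_n(t_l))\bigr) \geq i(\alpha,\gamma)\, w_\gamma\bigl(\varphi_{l,n}(\zeta_n(t_l))\bigr) \to \infty
\]
as $\ell_\gamma(\varphi_{l,n}(\zeta_n(t_l))) \to 0$, contradicting the uniform upper bound on $\ell_\alpha$.

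With $\sigma_l \cap \alpha = \emptyset$ for every $l$, each $\varphi_{l,n}$ fixes $\alpha$ and acts trivially on $\cC(\alpha)$, so $\pi_\alpha(\mu(\zeta_n(t))) = \pi_\alpha(\mu(\varphi_{l,n}(\zeta_n(t))))$ converges to $\pi_\alpha(\mu(\hat\zeta(t)))$. Since $\ell_\alpha(\hat\zeta(t))$ stays in a compact interval bounded away from $0$ throughout the compact piecewise geodesic $\hat\zeta$, and $\alpha$ is not a component of any $\sigma_l$, the subsurface projection $\pi_\alpha(\mu(\hat\zeta(t)))$ varies with uniformly bounded diameter in $\cC(\alpha)$. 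Hence $d_\alpha(\mu(\zeta_n(0)), \mu(\zeta_n(T_n)))$ is uniformly bounded, contradicting our assumption. The main obstacles are arranging the normalization in the first step so that both $\alpha$ becomes a fixed curve and $\zeta_n(s_n)$ lies in a fixed compact set, and the inductive step showing $\sigma_l$ avoids $\alpha$; the latter relies essentially on Wolpert's Lipschitz control of $\sqrt{\ell_\alpha}$ to prevent $\ell_\alpha$ from blowing up at the limit pinch times $t_l$.
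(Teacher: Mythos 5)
First, note that the paper offers no proof of this statement: it is imported verbatim as \cite[Corollary 4.10]{wpbehavior}, so there is no internal argument to compare against. Your overall strategy --- contradiction, the Geodesic Limit Theorem, twist-correctors $\varphi_{l,n}$ that fix $\alpha$, and boundedness of the $\pi_\alpha$--projections along the limiting broken geodesic --- is the right shape for this kind of statement and is consistent in spirit with how such results are proved in \cite{wpbehavior}. The later steps (the induction showing $i(\alpha,\sigma_l)=0$ via the Collar Lemma, and the triviality of the $\cC(\alpha)$--action of twists about curves disjoint from $\alpha$) are fine, modulo also ruling out $\alpha\in\sigma_l$, which your hypothesis $\inf_t\ell_\alpha>\ep$ handles easily.

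The genuine gap is in the normalization, and it is twofold. (i) The upper bound $M=M(\ep_0,T)$ with $\ell_\alpha(\zeta_n(s_n))\in[\ep,M]$ is unjustified: the hypotheses and the contradiction hypothesis only bound $\ell_\alpha$ from \emph{below}, convexity locates the maximum at an endpoint but does not bound it, and Wolpert's estimate $|\sqrt{\ell_\alpha(X)}-\sqrt{\ell_\alpha(Y)}|\lmul d_{\WP}(X,Y)$ bounds the sup in terms of the inf, not the reverse. So the case $\inf_t\ell_\alpha(\zeta_n(t))\to\infty$ is simply not covered, and it is not a side case: for $\ep=\ep_0$ the theorem is \emph{exactly} the assertion that this case has bounded twisting. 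Everything downstream (the uniform upper bound on $\ell_\alpha(\zeta_n(t_l))$ used in your induction, and the finiteness of $\Mod(S)$--orbits of the pairs $(\zeta_n(s_n),\alpha)$) rests on this missing bound. (ii) Even granting $\ell_\alpha(\zeta_n(s_n))\leq M$, the claim that some $\phi_n\in\Mod(S)$ places $\zeta_n(s_n)$ in a fixed compact subset of $\Teich(S)$ is false: curves disjoint from $\alpha$ may be arbitrarily short along all of $\zeta_n$, so the points $\zeta_n(s_n)$ need not lie in any fixed compact part of moduli space; after remarking they only subconverge in the completion $\oT{S}$, possibly to a stratum point. Theorem~\ref{thm : geodesic limit} as stated here requires a \emph{fixed} basepoint in $\Teich(S)$, so it does not apply; you would need the stronger version from \cite{wpbehavior} allowing initial points converging in $\oT{S}$. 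Both issues are fixable, but as written the proof does not go through.
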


%%%%%%%%%%%%%%%%%%%%%%%%%
%%%%%%%%%%%%%%%%%%%%%%%%%
\section{Geodesics with nonminimal ending laminations} \label{sec : nonminimal}
%%%%%%%%%%%%%%%%%%%%%%%%%
%%%%%%%%%%%%%%%%%%%%%%%%%

In this section we prove the main result of the paper (see also Theorem~\ref{thm : limit of r} for a more precise statement).

\begin{thm}\label{thm:main}
There exist Weil-Petersson geodesic rays with nonminimal, nonuniquely ergodic ending laminations whose limit set 
in the Thurston compactification of \T\ space is $1$--dimensional.
\end{thm}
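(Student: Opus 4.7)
The strategy is to adapt Lenzhen's Teichm\"uller-theoretic construction to the Weil--Petersson setting. I would fix two disjoint essential subsurfaces $Y_1, Y_2 \subset S$, each of sufficient complexity to support a filling (uniquely ergodic) lamination $\nu_i \in \EL(Y_i)$, and take as putative ending lamination the union $\nu = \nu_1 \cup \nu_2 \cup \partial Y_1 \cup \partial Y_2$, which is automatically nonminimal and nonuniquely ergodic. Using pseudo-Anosov homeomorphisms $\phi_i$ supported on $Y_i$ with stable lamination $\nu_i$, I would build a sequence of markings $\mu_k = \phi_{i_k}^{n_k} \circ \cdots \circ \phi_{i_1}^{n_1}(\mu_0)$ that alternates between $Y_1$ and $Y_2$, with large powers $n_k$ to be calibrated later. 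The candidate geodesic ray would then be extracted as a limit of the WP geodesic segments $\zeta_k$ joining a fixed basepoint $X_0$ to points $X_k \in \Teich(S)$ at which $\mu_k$ is realized as a Bers marking.

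The first technical step is to produce the ray itself. I would apply Theorem~\ref{thm : geodesic limit} on each compact time window and diagonalize to extract a piecewise WP-geodesic path $r \colon [0,\infty) \to \oT{S}$. The alternation of the two subsurfaces, combined with the non-refraction condition $\sigma_l \cap \sigma_{l+1} = \emptyset$, should guarantee that $r$ does not get trapped in a single boundary stratum but returns to $\Teich(S)$ between phases; this in turn relies on Theorem~\ref{thm : tw-sh} to convert the large annular and subsurface projections $d_{Y_i}(\mu(X_0),\mu_k)$ produced by the $\phi_i^{n_k}$ into actual pinching of curves in $\overline{Y_i}$ along $r$. Identifying $\nu^+(r)$ with $\nu$ then reduces to tracking the Bers curves along $r$ and applying the definition of $\nu^+$, using the fact that the $\mu_k$ converge (in the coefficient-free sense) to a set whose Hausdorff limit contains $\nu$.

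To analyze the limit set in $\PML(S)$, I would select two sequences of times $t_k^1, t_k^2 \to \infty$ at which $r$ sits deep inside its respective twisting phases, and compute the lengths of a finite set of test curves (a pants decomposition plus transverse curves) via Theorem~\ref{thm:combinatorial length}. At $t_k^1$, the Bers pants decomposition contains $\partial Y_1$ together with short curves interior to $Y_1$, and the dominant contribution to $\ell_\delta(r(t_k^1))$ comes from the summands $\I(\delta,\gamma)\,\ell_\gamma(r(t_k^1))\,\tw_\gamma(\delta,r(t_k^1))$ with $\gamma \subset \overline{Y_1}$. With an appropriate scaling sequence $u_k^1 \to 0$ these should converge to $\I(\delta,\bar\lambda_1)$ for a measured lamination $\bar\lambda_1$ supported on $\overline{Y_1}$; symmetrically, $r(t_k^2)$ converges projectively to $[\bar\lambda_2]$ supported on $\overline{Y_2}$. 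Since $\I(\bar\lambda_1,\bar\lambda_2) = 0$, the classes $[a\bar\lambda_1 + b\bar\lambda_2]$ with $a,b \geq 0$ and $a+b=1$ form a projective segment in $\PML(S)$; varying $t$ continuously through the transitions between phases, every class on this segment should arise as a subsequential limit, producing the desired $1$-dimensional limit set.

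The principal obstacle is the calibration of the twist exponents $n_k$. They must be chosen large enough that at the midpoints of each phase the corresponding $\bar\lambda_i$ dominates the length spectrum, yet controlled enough that during the transition the relative weights of the two extremal laminations vary continuously and sweep out the entire projective segment rather than jumping between its endpoints. This demands simultaneous control of subsurface projections to $Y_1$, $Y_2$, and to the annuli around each curve of $\partial Y_i$, together with the hyperbolic lengths along $r$. A secondary hurdle is verifying that the limit path stays in $\Teich(S)$ rather than refracting into deeper strata at each phase change; I expect this to follow from the non-refraction conclusion of Theorem~\ref{thm : geodesic limit} combined with careful bookkeeping of exactly which curves shorten at each time.
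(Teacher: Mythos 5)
Your high-level strategy is the same as the paper's: two disjoint subsurfaces carrying minimal laminations, a ray shadowing data in both, the combinatorial length formula of Theorem~\ref{thm:combinatorial length}, two interleaved sequences of times producing the endpoints $[\bar\lambda_0]$ and $[\bar\lambda_1]$, and the conclusion that the limit set is the whole $1$--simplex. One structural difference is worth noting: the paper does not alternate twisting phases via powers of pseudo-Anosovs. It takes $S$ of genus $2$ split by a separating curve $\alpha$ into two one-holed tori, builds a continued-fraction geodesic of curves in each Farey graph, forms the \emph{product} ray $\hat r=(\hat r^0,\hat r^1)$ inside the stratum $\cS(\alpha)\cong\Teich(S_0)\times\Teich(S_1)$ with both factors progressing simultaneously (only the times $t_k$ at which the curves $\gamma_k$ are shortest interleave), and then takes $r$ to be a ray in $\Teich(S)$ fellow-traveling $\hat r$. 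This is what makes the timing control (which curves are short at which times, uniformly in the still-unchosen exponents) actually provable; in your alternating-markings construction the analogous control over when each curve is short, and with what twisting, is precisely the part you defer to ``calibration'' without a mechanism. Also, for the last step the paper does not sweep the segment by varying $t$ through transitions: it shows the limit set is contained in the segment, contains both endpoints, and is connected, which is cleaner and avoids analyzing the transition times.

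The genuine gap is the contribution of the boundary curves $\partial Y_i$ (the paper's $\alpha$) to the length expansion. At the relevant times the Bers pants decomposition contains $\partial Y_i$, so Theorem~\ref{thm:combinatorial length} produces a term $\I(\delta,\partial Y_i)\bigl(w_{\partial Y_i}(r(t))+\ell_{\partial Y_i}(r(t))\tw_{\partial Y_i}(\delta,r(t))\bigr)$, and nothing in your sketch prevents the width $w_{\partial Y_i}\sim -2\log\ell_{\partial Y_i}$ or the accumulated twisting about $\partial Y_i$ from growing as fast as the dominant terms, in which case the limit would not lie on the segment $[[\bar\lambda_0],[\bar\lambda_1]]$. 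The subtlety is that you cannot simply ``choose the exponents large enough'' to beat this term, because the bounds on $\ell_{\partial Y_i}$ and on $d_{\partial Y_i}(r(0),r(t))$ could a priori depend on the exponents you are trying to choose. The paper resolves this with Lemmas~\ref{lem : lb for alpha} and~\ref{lem : tw alpha}: a compactness argument via the Geodesic Limit Theorem~\ref{thm : geodesic limit} (using non-refraction to rule out $\alpha$ pinching in finite time) gives a positive lower bound $f_1(t)\leq\ell_\alpha(r(t))$, and Theorem~\ref{thm : tw-sh} gives an upper bound $f_2(t)\geq d_\alpha(r(0),r(t))$, both \emph{uniform over every ray in the family}; only then can one impose the growth condition $\bigl(F_{2,k}-2\log F_{1,k}\bigr)/e_k\to 0$ and kill the boundary term. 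This uniformity argument is a real piece of the proof, not bookkeeping, and your proposal needs it (or a substitute) to close. A minor further inaccuracy: $\partial Y_i$ belongs to the ending lamination only if it is a pinching curve, which is not automatic and is in fact left undetermined in the paper; this does not affect nonminimality or nonunique ergodicity of $\nu^+(r)$, since $\nu_1\cup\nu_2$ already has those properties.
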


First, let us briefly sketch our construction of such geodesic rays.  
The basic idea is similar to Lenzhen's construction for Teichm\"uller geodesics in \cite{lenzhen}.   
Let $S$ be the closed, genus $2$ surface and let $\alpha \subset S$ be a separating simple closed curve cutting $S$ into two one-holed tori that we denote by $S_0$ and $S_1$.  The stratum $\cS(\alpha)$ is isometric to a product of \T\ spaces of once-punctured tori, i.e., $\cS(\alpha) \cong \Teich(S_0) \times \Teich(S_1)$.

We carefully choose sequences of curves $\{\gamma^h_i\}_i\subset \cC(S_h),\; h=0,1$ which form quasi-geodesics and limit to minimal filling laminations $\lambda_h,\; h=0,1$.  Using the fact that $\Teich(S_h)$ with the WP metric is quasi-isometric to $\cC(S_h)$, and that it has negative curvature bounded away from $0$ we construct geodesic rays $\hat r^h$ in $\Teich(S_h)$ which have forward ending laminations $\lambda_h$, $h = 0,1$. 

Next, we consider the geodesic $\hat r=(\hat r^0,\hat r^1)$ in $\oT{S}$, and construct a geodesic ray $r$ which fellow travels $\hat r$.  
We estimate the length of an arbitrary curve along $r$ using estimates from Theorem \ref{thm:combinatorial length}.  
From the conditions we imposed on our sequences of curves, 
we will see that most of the length of the curve comes from its intersection with curves $\gamma^0_i$ and $\gamma^1_i$, and so lengths are eventually well--approximated by intersection numbers with linear combinations of measure $\bar \lambda_0$ and $\bar \lambda_1$ on $\lambda_0$ and $\lambda_1$, respectively.
Consequently, this geodesic ray accumulates on a $1$--simplex with vertices $[\bar\lambda_0]$ and $[\bar\lambda_1]$ in the Thurston boundary.  
Analyzing a pair of particular sequences of times, we see that the endpoints of the simplex are in the limit set, and so by connectivity, the limit set consists of the entire $1$--simplex.

%%%%%%%%%%%%%%%%%%%%%
\subsection{Continued fraction expansions and geodesics in $\Teich(S_{1,1})$}\label{subsec : contfrac}
%%%%%%%%%%%%%%%%%%%%%

%\cl{making the first coefficient $0$ simplifies things a little bit, I think.. then we can set $\gamma_{-1}^h$ to be the $\infty = 1/0$ curve and $\gamma_0^h$ is the $0/1$ curve.}

Let $\lambda_h$ be a minimal, irrational lamination on $S_{h}$.  
This lamination is the straightening of a foliation of the flat square torus, and we assume for convenience that the slope of the leaves of this foliation is greater than $1$.  
The reciprocal of this slope is an irrational number less than $1$ which we denote by $x_h$, and we write its continued fraction expansion as
 \begin{eqnarray}
x_h=[0 ; e^h_0,e^h_1,\ldots],
 \end{eqnarray}
(the first coefficient is zero since  $x_h  < 1$).  We assume in all that follows that $e_i^h \geq 4$ for all $i$ and for $h=0,1$.

\begin{figure}
\centering
\scalebox{0.2}{}
 \begin{tikzpicture}[scale=12]
  
    \draw [line width=0.01cm] (0,0) -- (1,0);
    \draw [line width=0.01cm] (0,0) -- (0,.5);
    \draw [line width=0.01cm] (1,0) -- (1,.5);
    \draw [dotted] (0,0) -- (0,-0.05) node[below]{$\frac{0}{1}$};
    \draw [dotted] (1,0) -- (1,-0.05) node[below]{$\frac{1}{1}$};
       
     \draw [line width=0.01cm] (1/2,0) arc (0:180:1/12);
    \draw [dotted] (1/3,0) -- (1/3,-0.05) node[below]{$\frac{1}{3}$};
    
     \draw [line width=0.01cm] (1/3,0) arc (0:180:1/6);
   %%%%%
       \draw [line width=0.01cm] (1,0) arc (0:180:1/2);

        \draw [line width=0.01cm] (1,0) arc (0:180:1/4);
    \draw [dotted] (.5,0) -- (1/2,-0.05) node[below]{$\frac{1}{2}$};
    
    \draw [line width=0.01 cm] (1,0) arc (0:180:1/6);
    \draw [dotted] (2/3,0) -- (2/3,-0.05) node[below]{$\frac{2}{3}$};
    
      \draw [line width=0.01cm] (1,0) arc (0:180:1/8);
     %%%%%
\draw  [line width=0.01cm] (1/2,0) arc (0:180:1/4);
\draw [line width=0.01cm] (2/3,0) arc (0:180:1/12);

\draw [line width=.01cm] (3/4,0) arc (0:180:1/24);
\draw [dotted] (3/4,0) -- (3/4,-0.05) node[below]{$\frac{3}{4}$};

\draw [line width=.01 cm] (1/4,0) arc (0:180:1/8);
\draw [line width=.01 cm] (1/3,0) arc (0:180:1/24);
\draw [dotted] (1/4,0) -- (1/4,-.05) node[below]{$\frac14$};

\draw [line width=.01 cm] (2/5,0) arc (0:180:1/30);
\draw [line width=.01 cm] (1/2,0) arc (0:180:1/20);
\draw [dotted] (2/5,0) -- (2/5,-.05) node[below]{$\frac25$};

\draw [line width=.01 cm] (3/5,0) arc (0:180:1/20);
\draw [line width=.01 cm] (2/3,0) arc (0:180:1/30);
\draw [dotted] (3/5,0) -- (3/5,-.05) node[below]{$\frac35$};

\end{tikzpicture}
     
  \caption{A piece of the curve graph $\cC(S_h)$, visualized as the Farey graph.}
\label{fig:farey}
  \end{figure}
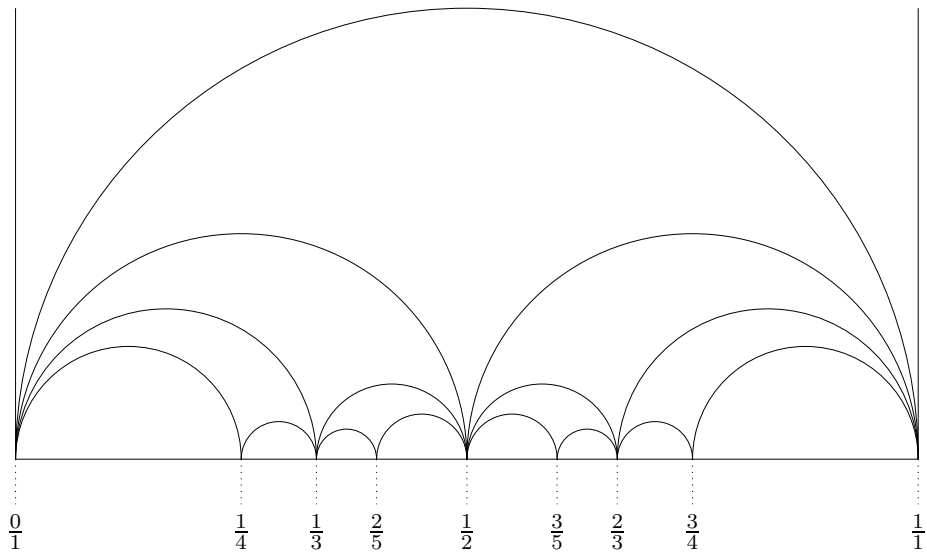

Next, let $\frac{p_i^h}{q_i^h} = [0;e^h_0,e^h_1,\ldots,e^h_{i-1}]$ be the {\em $i^{th}$ convergent} with finite continued fraction expansion as shown, obtained by truncating that of $x_h$.  
Let $\gamma_i^h$ be the simple closed curve on the torus whose slope is the reciprocal, $\frac{q_i^h}{p_i^h}$.  
Note that $\gamma_0^h$ is the curve whose reciprocal slope is $0$ (that is, $\gamma_0^h$ is the vertical curve) and we let $\gamma_{-1}^h$ denote the horizontal curve, by convention.

The {\em Farey graph} is the graph with vertices corresponding to $\mathbb Q \cup \{\infty\}$ and edges between $\frac{p}{q}$ and $\frac{r}{s}$ whenever $|ps-rq|=1$ \cite{modsurf}; see Figure~\ref{fig:farey}.
Identifying a simple closed curve on the (flat, square) torus with the reciprocal of its slope identifies the curve graph $\cC(S_h)$ with Farey graph \cite{geometricccplx}, 
and we use these two graphs interchangeably depending on our purposes. Our assumption that $e_i^h \geq 4$ ensures that the sequence of curves $\{\gamma_i^h\}_i$ 
(or equivalently, the sequence of convergents $\{\frac{p_i^h}{q_i^h}\}_i$) is a geodesic; see e.g.~\cite[Section 3]{geometricccplx}.  Our index convention leads to
\begin{equation}\label{eq : tw gh i-1, i+1}
\gamma^h_{i+1}=D_{\gamma^h_i}^{\pm e^h_i}(\gamma^h_{i-1}),
\end{equation}
with the sign determined by the parity of $i$; see Figure \ref{fig:farey2}.

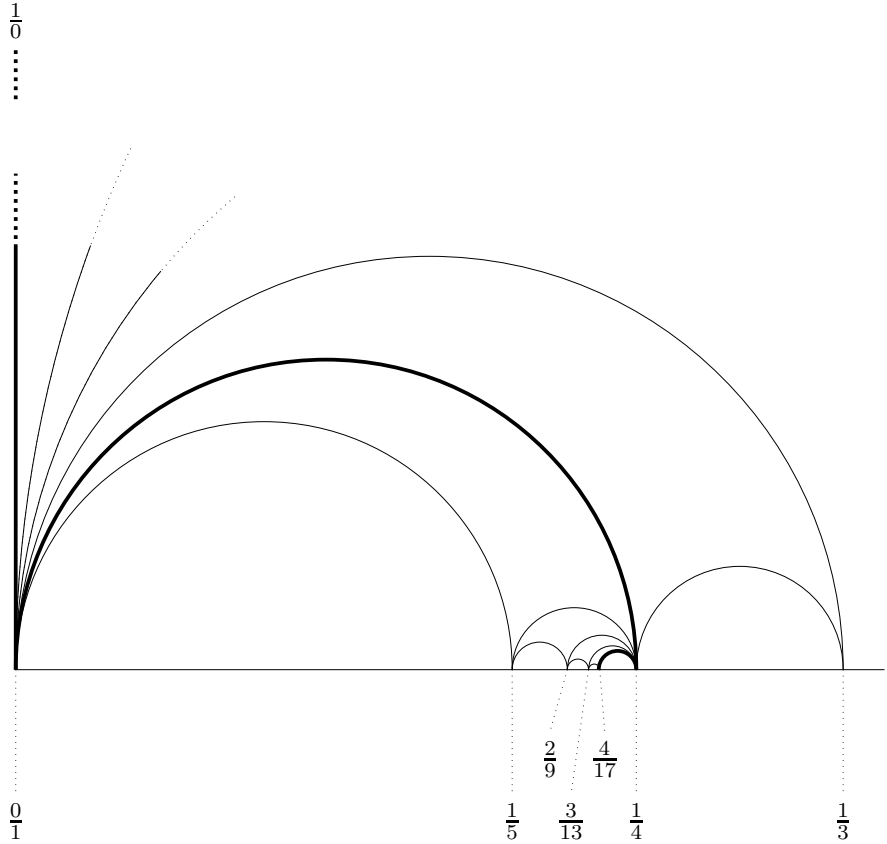
\begin{figure}
\centering
\scalebox{0.2}{}
 \begin{tikzpicture}[scale=33]

\draw [line width=.01 cm] (0,0) -- (.35,0);
\draw [line width=.05 cm] (0,0) -- (0,.17);
\draw [line width=.05 cm, dotted] (0,.17) -- (0,.2);
\draw [line width=.05 cm, dotted] (0,.23) -- (0,.25) node[above] {$\frac10$};
%\draw [line width=.01 cm] (1,0) -- (1,.3);
%\draw [line width=.01 cm] (1,0) arc (0:35:1/2);
\draw [line width=.01 cm] (0,0) arc (180:160:1/2);
\draw [line width=.01 cm, dotted] (0,0) arc (180:155:1/2);
\draw [line width=.01 cm] (0,0) arc (180:140:1/4);
\draw [line width=.01 cm, dotted] (0,0) arc (180:130:1/4);
\draw [line width=.05 cm] (1/4,0) arc (0:180:1/8);
\draw [line width=.01 cm] (1/3,0) arc (0:180:1/6);
\draw [line width=.01 cm] (1/4,0) arc (0:180:1/40);
\draw [line width=.01 cm] (1/4,0) arc (0:180:1/72);
\draw [line width=.01 cm] (1/4,0) arc (0:180:1/104);
\draw [line width=.05 cm] (1/4,0) arc (0:180:1/132);
\draw [line width=.01 cm] (1/5,0) arc (0:180:1/10);
\draw [line width=.01 cm] (2/9,0) arc (0:180:1/90);
\draw [line width=.01 cm] (3/13,0) arc (0:180:1/234);
\draw [line width=.01 cm] (4/17,0) arc (0:180:1/442);
\draw [line width=.01 cm] (1/3,0) arc (0:180:1/24);
%\draw [line width=.04 cm] (17/72,0) arc (0:180:1/2448);

\draw [line width=.01 cm, dotted] (0,0) -- (0,-.05) node[below] {$\frac01$};
\draw [line width=.01 cm, dotted] (1,0) -- (1,-.05) node[below] {$\frac11$};
%\draw [line width=.01 cm, dotted] (1/2,0) -- (1/2,-.05) node[below] {$\frac12$};
\draw [line width=.01 cm, dotted] (1/3,0) -- (1/3,-.05) node[below] {$\frac13$};
\draw [line width=.01 cm, dotted] (1/4,0) -- (1/4,-.05) node[below] {$\frac14$};
\draw [line width=.01 cm, dotted] (1/5,0) -- (1/5,-.05) node[below] {$\frac15$};
\draw [line width=.01 cm, dotted] (2/9,0) -- (2/9-.007,-.025) node[below] {$\frac29$};
\draw [line width=.01 cm, dotted] (3/13,0) -- (3/13-.007,-.05) node[below] {$\frac{3}{13}$};
\draw [line width=.01 cm, dotted] (4/17,0) -- (4/17+.002,-.025) node[below] {$\frac4{17}$};

     \end{tikzpicture}
  \caption{A picture of the initial segment of the geodesic in the curve graph $\cC(S_h)$ (again visualized as the Farey graph) defined by the continued fraction $[0;4,4,4,\ldots]$.  The edges of the geodesic are drawn as thicker lines, and the first few vertices, $\gamma_{-1}^h=\frac{1}0, \gamma_0^h = \frac{0}1, \gamma_1^h = \frac{1}4, \gamma_2^h = \frac{4}{17}, \ldots$, are indicated. 
As one can see from the first few segments of the geodesic, it ``pivots'' on opposite sides (c.f.~\cite{geometricccplx}), reflecting the fact that the sign in front of $e^h_i = 4 > 0$ in the Dehn twisting alternates.   
It follows that the segment $[\gamma^h_{i},\gamma^h_{i+1}]$ is separated from the segment $[\gamma^h_0,\gamma^h_1]$ by
 the segment $[\gamma^h_{i-1},\gamma^h_{i}]$ in $\cC(S_h)$.}
\label{fig:farey2}
  \end{figure}

The curve graph $\cC(S_h)$---or equivalently the Farey graph---naturally embeds into the Weil-Petersson completion of $\Teich(S_h)$ in such a way that the vertex corresponding to the curve $\gamma$ is sent to the point in which $\gamma$ has been pinched, and so that edges between adjacent vertices are sent to WP geodesics.  
Furthermore, the pants graph and the curve graph of a once punctured torus coincide, and according to \cite[Theorem 3.2]{br} this embedding is a quasi-isometry.  The usual identification of $\oT{S_h}$ with a subset of the compactified upper half-plane provides the standard embedding of the Farey graph into $\overline{\mathbb H^2}$, with vertex set $\mathbb Q \cup \{\infty\} \subset \mathbb R \cup \{\infty\} = S^1_\infty$.
We further note that all maps and identificiitons are equivariant with respect to the actions of $\Mod(S_h) \cong \mbox{SL}_2(\mathbb Z)$ on the various graphs/spaces.

For each $i \geq 0$, let $X_i^h \in \oT{S_h}$ denote the point at which $\gamma_i^h$ is pinched and $[X_i^h,X_i^{h+1}]$ the geodesic in $\oT{S}$ between points $X^h_i$ and $X^h_{i+1}$. These geodesic segments are the images of the geodesics $[\gamma_i^h,\gamma_{i+1}^h]$ in $\cC(S_h)$ we described above, and since the concatenation of the latter set of segments is a geodesic in $\cC(S_h)$, the image is a quasi-geodesic in $\oT{S_h}$. 
%Note that the action of $\Mod(S_h)$ on $\cC(S_h)$ and $\overline{\Teich(S)}$ is equivariant with respect to the map form $\cC(S_h)$ to $\oT{S}$ we defined above. 
Then since the action of $\Mod(S_h)$ on $\gamma^h_i$ is transitive, it is clearly
 transitive on the geodesics segments $[X^h_i,X^h_{i+1}]$. Moreover, $\Mod(S_h)$ acts isometrically on $\Teich(S_h)$, 
 so all geodesics $[X^h_i,X^h_{i+1}]$ have the same lengths, 
we denote the length by
\begin{equation} \label{Eq:defining D} D = d_{\WP}(X_i^h,X_{i+1}^h) > 0. \end{equation}
Note that $\gamma_0^0 = \gamma_0^1$ is the curve corresponding to the rational number $0$, and for convenience we let $X_{-1}^h$ 
denote the midpoint of the geodesic segment between (the image of) $\gamma_0^h$ and $\gamma_{-1}^h$ 
which has distance $\frac{D}{2}$ to $X_0^h$ (note that $X_{-1}^h =\sqrt{-1}$ in the upper half plane).

Let $\hat r^h_c$ be the unit speed parameterization of the concatenation of segments $[X^h_i,X^h_{i+1}],\; i\in \mathbb{N}$, for $h = 0,1$.  The set of (not necessarily infinite) geodesic rays starting at $X_{-1}^h$ and passing through a point on the geodesic segment $[X^h_i,X^h_{i+1}]$ forms a nested sequence, indexed by $i$. 
To see this, note that by the change of the sign of the power of $D_{\gamma^h_i}$ in (\ref{eq : tw gh i-1, i+1}),  the geodesic $[\gamma^h_{i},\gamma^h_{i+1}]$ is separated from $\gamma^h_{-1}$ by the geodesic $[\gamma^h_{i-1},\gamma^h_{i}]$ in $\cC(S_h)$; see Figure \ref{fig:farey2}. This implies that the geodesic $[X^h_{i},X^h_{i+1}]$ is separated from $X^h_{-1}$ by the geodesic $[X^h_{i-1},X^h_{i}]$ in $\overline{\Teich(S_h)}$, and hence any geodesic starting from $X^h_{-1}$ that passes through $[X^h_{i},X^h_{i+1}]$ must also pass through $[X^h_{i-1},X^h_{i}]$.

Now note that $\hat r^h_c$ is an infinite quasi-geodesic in $\overline{\Teich(S_h)}$, so the distance between the segments $[X^h_i,X^h_{i+1}]$ and $X^h_{-1}$ go to infinity. Then the negative curvature of the WP metric on $\Teich(S_h)$ \cite[Corollary 7.6]{wol}
implies that the maximum of the smaller angles at $X^h_{-1}$ between any two geodesics in the nested sequence of geodesic segments tends to $0$ as $i\to\infty$. This guarantees
 the existence of a unique ray in the intersection of all these sets. We denote the ray by $\hat r^h$ and note that it fellow travels $\hat r^h_c$.

\begin{lem} \label{lem : close to concatenation} 
There exists a sequence $\{K_i\}_{i=1}^\infty$ so that if $e_i^h > K_i$ for all $i \geq 0$, 
and if $\{t_i^h\}$ is the sequence of times for which $d_{\WP}(\hat r^h(t_i^h),X^h_i)$ is minimized, then for $D$ from (\ref{Eq:defining D}) we have
\begin{enumerate}
\item $d_{\WP}(\hat r^h_c(s),\hat r^h(s)) \leq \tfrac{D}4$ for all $s > 0$,\\
\item $d_{\WP}(\hat{r}^h(t^h_i),X^h_i) \leq \tfrac{D}{2^{i+6}}$ (which tends to $0$ as $i\to\infty$), and\\
\item $\left| t_i^h - \left( \frac{D}2 + iD \right) \right| < \tfrac{D}8$ (in particular, $\{t_i^h\}_i$ is increasing).\\
\end{enumerate}

\end{lem}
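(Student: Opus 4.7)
The plan is to argue by induction on $i$, choosing $K_i$ so large that the twist exponent $e_i^h$ forces the piecewise geodesic $\hat r^h_c$ to be very ``straight'' at each pivot $X_i^h$, and then using the $\CAT(0)$ structure of $\oT{S_h}$ together with the strict negative curvature of the WP metric on $\Teich(S_h)$ from \cite[Corollary 7.6]{wol} to show that the limiting ray $\hat r^h$ tracks $\hat r^h_c$ increasingly closely at each successive pivot. The first thing I would establish is that as $e_i^h\to\infty$, the $\CAT(0)$ comparison angle $\theta_i$ at $X_i^h$ between the incoming segment $[X_{i-1}^h,X_i^h]$ and the outgoing segment $[X_i^h,X_{i+1}^h]$ tends to $0$. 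This is visible in the Farey/upper half-plane picture (where $\gamma_{i-1}^h$ and $\gamma_{i+1}^h$ coalesce on $\partial\mathbb{H}^2$ as the twist exponent grows), and in the WP model it follows from $\Mod(S_h)$-equivariance together with Wolpert's local description of WP near the pinch stratum applied to the formula $\gamma_{i+1}^h = D_{\gamma_i^h}^{\pm e_i^h}(\gamma_{i-1}^h)$. Accordingly I would choose $K_i$ (depending on the previously chosen $K_j$) so that $e_i^h>K_i$ forces $\theta_i<\epsilon_i$ for a geometrically decaying sequence $\{\epsilon_i\}$.

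With the angles $\theta_i$ controlled, the induction establishing (2) runs as follows. Assuming $\hat r^h$ passes within $D/2^{i+5}$ of $X_{i-1}^h$ at a time $t_{i-1}^h$ near $D/2+(i-1)D$, every ray in the nested family $A_j$ with $j\geq i$ passes through both $[X_{i-1}^h,X_i^h]$ and $[X_i^h,X_{i+1}^h]$ by the pivoting property established in the text. A $\CAT(0)$ triangle comparison in $\oT{S_h}$ applied to such a ray, using the smallness of $\theta_i$ and the inductive bound, forces the ray to pass within $D/2^{i+6}$ of $X_i^h$; passing to the limit yields the same bound for $\hat r^h$. The base case $i=0$ runs the same argument starting from the initial half-segment $[X_{-1}^h,X_0^h]$. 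Item (3) then follows from (2) and the fact that $\hat r^h_c$ reaches $X_i^h$ at arclength parameter $D/2+iD$: since the deviations $D/2^{i+6}$ are geometrically summable, the triangle inequality along the arclength-parametrized $\hat r^h$ yields $|t_i^h-(D/2+iD)|<D/8$ comfortably. Finally, item (1) is deduced by applying $\CAT(0)$ comparison on each subinterval $[t_{i-1}^h,t_i^h]$: both $\hat r^h$ and $\hat r^h_c$ are unit-speed geodesics whose endpoints lie within tiny neighborhoods of $X_{i-1}^h$ and $X_i^h$, so their distance is uniformly bounded, with $D/4$ as comfortable slack.

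The main obstacle is the first step: quantifying the decay of the comparison angle $\theta_i$ at the pinch point $X_i^h$ as $e_i^h\to\infty$, and doing so effectively enough to support the geometric rate $D/2^{i+6}$ in (2). Because $X_i^h$ lies on the stratum $\cS(\gamma_i^h)$ rather than in the interior of $\Teich(S_h)$, the WP metric is not smoothly modeled there, and one must use Wolpert's asymptotic expansion near pinch strata together with Brock's quasi-isometry $\oT{S_h}\simeq\cC(S_h)$ to convert the combinatorial control (large twist exponent) into the required metric/angular control. Propagating this control through the induction requires careful bookkeeping, as each $K_i$ must be chosen relative to all previous $K_j$'s so that the accumulated error remains geometrically summable.
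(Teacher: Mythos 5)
Your overall architecture (use the nesting of the segments to force $\hat r^h$ within $\eta_i=D/2^{i+6}$ of each pivot $X_i^h$, then sum the errors for (3) and use convexity of the distance function for (1)) matches the paper's, but the one step that carries all the content of the lemma is left unproven, and the way you formulate it is problematic. You propose to show that a ``comparison angle'' $\theta_i$ at $X_i^h$ decays as $e_i^h\to\infty$, and you yourself flag this as ``the main obstacle.'' The difficulty is worse than bookkeeping: by Wolpert's expansion the WP metric near the stratum has the cuspidal form $4\,dr^2+r^6\,d\tau^2$, so the Alexandrov tangent cone at the pinch point $X_i^h\in\oT{S_h}$ transverse to the stratum is a single ray, and the Alexandrov angle at $X_i^h$ between \emph{any} two geodesics emanating into $\Teich(S_h)$ is zero, independent of $e_i^h$. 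Hence $\hat r^h_c$ is never ``straight'' at a pivot in the angular sense required by the standard CAT(0) chain-of-geodesics lemmas, and the comparison angle of the triangle $(X_{i-1}^h,X_i^h,X_{i+1}^h)$ at $X_i^h$ tends to $\pi$, not $0$. What can be salvaged from your outline is a comparison-\emph{triangle} degeneracy: if one proves $d_{\WP}(X_{i-1}^h,X_{i+1}^h)\to 2D$ as $e_i^h\to\infty$, then a Euclidean comparison for the triangle with vertices $X_i^h$ and the two points where $\hat r^h$ crosses the adjacent segments does force $\hat r^h$ within $O(\sqrt{D\epsilon_i})$ of $X_i^h$. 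But that degeneracy is precisely the quantitative heart of the lemma and needs an input such as Theorem~\ref{thm : tw-sh} (large relative twisting about $\gamma_i^h$ along a bounded-length geodesic forces $\ell_{\gamma_i^h}$ small), which you do not invoke. The paper avoids angles entirely: it takes the subsegment $\delta_i^h$ of $\hat r^h$ running from $[X_{i-2}^h,X_{i-1}^h]$ to $[X_{i+1}^h,X_{i+2}^h]$, whose length is at most $4D$, projects it onto the convex sublevel set $\ell_{\gamma_i^h}^{-1}((0,L(\eta)])$ contained in the $\eta$--ball about $X_i^h$, and notes that the projection must cross at least $e_i^h$ boundary arcs each of length $C(\eta)$; so if $\delta_i^h$ avoided the $\eta$--ball it would have length at least $C(\eta)e_i^h>4D$ once $e_i^h>K_i:=4D/C(\eta_i)$.

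There is also a gap in your deduction of (3): from (2) and the triangle inequality one only gets $\bigl|\,|t_{i+1}^h-t_i^h|-D\,\bigr|<D/2^{i+5}$, which does not determine the sign of the increments, so you cannot telescope to $t_i^h\approx \tfrac{D}{2}+iD$ without first proving $\{t_i^h\}$ is monotone (or a matching lower bound $d_{\WP}(X_{-1}^h,X_i^h)\gtrsim \tfrac{D}{2}+iD$). The paper rules out backtracking by showing it would force $d_{\WP}(X_i^h,X_{i+2}^h)<D/4$, contradicting the Brock--Margalit lower bound $d_{\WP}(X_i^h,X_{i+2}^h)>D$ for non-adjacent Farey vertices. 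Your part (1) via convexity is fine, except that the comparison should be made over the intervals $[\tfrac{D}{2}+(i-1)D,\tfrac{D}{2}+iD]$, on which $\hat r^h_c$ really is a single geodesic segment, rather than over $[t_{i-1}^h,t_i^h]$.
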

\begin{proof} 
First we will show that for all $i \geq 0$, we can choose $K_i > 0$ such that if $e_i^h > K_i$, then we have
\[ d_{\WP}(\hat r^h(t_i^h),X_i^h) < \tfrac{D}{2^{i+6}}.\]
To prove this, first let $\delta_i^h$ denote the segment of $\hat r^h$ with one endpoint on $[X^h_{i-2},X^h_{i-1}]$ and and the other on $[X_{i+1}^h,X_{i+2}^h]$ (recall that $\hat r^h$ pass through these geodesics).
Since the piecewise geodesic segment $\hat r^h_c$ contains a segment of length $4D$ containing $[X^h_{i-2},X^h_{i-1}]$ and $[X_{i+1}^h,X_{i+2}^h]$, the length of $\delta_i^h$ is at most $4D$.
Given any $\eta > 0$, we claim that there exists $C(\eta) > 0$ so that if $\hat r^h$ stays outside of the $\eta$--neighborhood of $X_i^h$, then the length of $\delta_i^h$ is at least $C(\eta)e_i^h$.  If we prove this claim, then taking $\eta_i = \tfrac{D}{2^{i+6}}$, we can set $K_i > \frac{4D}{C(\eta_i)}$ and observe that if $e_i^h > K_i$, then $\delta_i^h$ (and hence $\hat r^h$) must enter the $\eta_i$--neighborhood, as required.

To prove the claim, recall that distance from a point $X \in \Teich(S_h)$ to $X_i^h$ is $(2 \pi \ell_{\gamma_i^h})^{\frac12} + O(\ell_{\gamma_i^h}^2)$ (see \cite[Corollary 4.10]{wolb}).  In particular, there exists $L(\eta) > 0$ so that the sublevel set $\ell_{\gamma_i^h}^{-1}((0,L(\eta)])$ is contained in the the ball of radius $\eta$ about $X_i^h$.  By convexity of length functions, \cite[\S 3.3]{wol}, the set $\ell_{\gamma_i^h}^{-1}((0,L(\eta)])$ is convex and hence the closest point projection of $\delta_i^h$ to it is no longer than $\delta_i^h$.  The length of each arc of the boundary of $\ell_{\gamma_i^h}^{-1}((0,L(\eta)])$ intersected with a triangle of the Farey tessellation is some constant $C(\eta)>0$, and since the projection of $\delta_i^h$
to the sublevel set has to cross at least $e_i^h$ of these arcs, its length is at least $C(\eta)e_i^h$, as required; see Figure~\ref{fig:farey3}.

\begin{figure}
\centering
\scalebox{0.2}{}
 \begin{tikzpicture}[scale=1.4]

\draw[fill=gray!50, draw = none] (-1.5,1.5) -- (7.5,1.5) -- (7.5,2.5) -- (-1.5,2.5) -- (-1.5,1.5);
\draw [line width = .001cm] (-1.5,1.5) -- (7.5,1.5);

\draw (-1.5,0) -- (7.5,0);
\draw (0,0) -- (0,2.5);
\draw (1,0) -- (1,2.5);
\draw (2,0) -- (2,2.5);
\draw (4,0) -- (4,2.5);
\draw (5,0) -- (5,2.5);
\draw (6,0) -- (6,2.5);
\draw (0,0) arc (0:180:1/6);
\draw (1,0) arc (0:180:1/2);
\draw (2,0) arc (0:180:1/2);
\draw (2,0) arc (180:90:1/2);

\node at (3,.75) {$\cdots \cdots$};

\draw (4,0)  arc (0:90:1/2);
\draw (5,0) arc (0:180:1/2);
\draw (6,0) arc (0:180:1/2);
\draw (19/3,0) arc (0:180:1/6);

\draw[dotted] (-1/3,0) -- (-1/3,-.5) node[below]{\tiny $X_{i-2}^h$};
\draw[dotted] (0,0) -- (0,-.75) node[below]{\tiny $X_{i-1}^h$};
\draw[dotted] (6,0) -- (6,-.75) node[below]{\tiny $X_{i+1}^h$};
\draw[dotted] (19/3,0) -- (19/3,-.5) node[below]{\tiny $X_{i+2}^h$};

\draw[<-] (0,-.5) -- (2.75,-.5);
\draw[->] (3.25,-.5) -- (6,-.5);
\node at (3,-.5) {$e_i^h$};

\node at (-.75,2) {$\ell_{\gamma_i^h}^{-1}(L(\eta))$};
\node at (3,2.75) {$X_i^h$};

\end{tikzpicture}
  \caption{Applying an element of $\Mod(S_h)$ we can assume $X_i^h$ is the point at infinity in the upper half-plane model of $\oT{S_h}$, as shown.  Then for $L(\eta) > 0$ small, $\ell_{\gamma_i^h}^{-1}((0,L(\eta)])$ is approximately a horoball, since a horoball is the sublevel set of the extremal length function and since hyperbolic lengths and extremal lengths are nearly proportional for small values (see \cite{maskit}).}
  %  By symmetry, $[X_{i-1}^h,X_i^h]$ and $[X_i^h,X_{i+1}^h]$ are orthogonal to this sublevel set at so project to a point, implying that each triangle intersecting $\ell_{\gamma_i^h}^{-1}(L_\eta)$ projects to the arc of intersection of the triangle with the boundary.}
\label{fig:farey3}
  \end{figure}
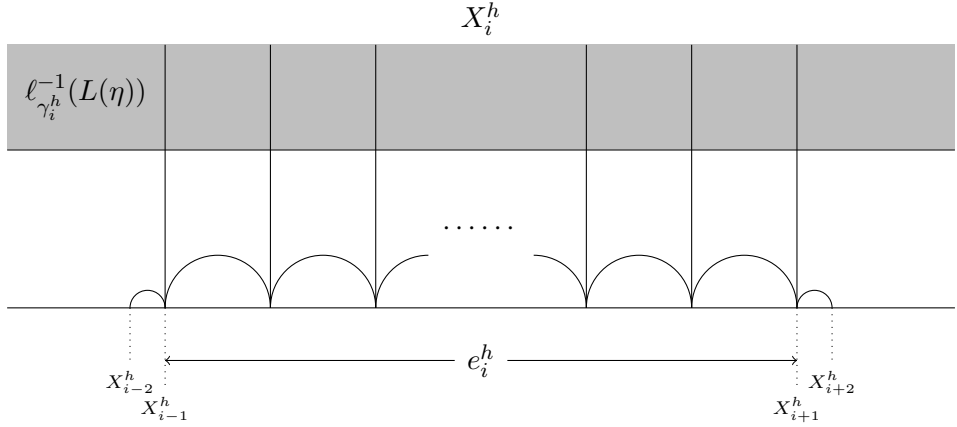

%For any small $\eta >0$, any path between a point on the first segment and a point on the second segment that lies outside the $\eta$--neighborhood of $X_i^h$ can be projected to the boundary of that $\eta$--neighborhood radially. We know that $[\gamma^h_i,\gamma^h_{i+1}]\cup[\gamma^h_{i+1},\gamma^h_{i+2}]$ is a geodesic in $\cC(S_h)$, thus the radial geodesic segment connecting the end point of the path on $[X^h_{i+1},X^h_{i+2}]$ and $X^h_i$ uniformly fellow travels $[X^h_i,X^h_{i+1}]\cup[X^h_{i+1},X^h_{i+2}]$ (independent of $i$) and intersects the $\eta$--neighborhood essentially similar to $[X^h_i,X^h_{i+1}]$. Moreover, by the construction $[X^h_i,X^h_{i+1}]$ spirals $e_i^h$ times around $X^h_i$ with respect to $[X^h_{i-1},X^h_i]$ thus the length of the radial projection of the path is $2\pi \eta e^h_i-\theta_0$ where $\theta_0$ is independent of $i$. 
 %On the other hand, ......Therefore, we can guarantee that $\hat r^h$ passes as close as we like to $X^h_i$ by taking $e_i^h$ sufficiently large. In particular, the above inequality holds choosing $K_i$ large enough. 

We now assume (as we will for the remainder of the proof) that $e_i^h>K_i$ for all $i \geq 0$, and observe that part (2) of the lemma holds.

By the triangle inequality, it follows that for all $i \geq 0$
\begin{equation} \label{E: nearly equal to D}
\begin{array}{rcl} \displaystyle{\Big| |t^h_i - t^h_{i+1}| - D \Big|} & = & \displaystyle{\left| d_{\WP}(\hat r^h(t_i^h),\hat r^h(t_{i+1}^h)) - d_{\WP}(X_i^h,X_{i+1}^h) \right|}\\
& \leq &\displaystyle{ \tfrac{D}{2^{i+6}} + \tfrac{D}{2^{i+7}} < \tfrac{D}{2^{i+5}}}.
\end{array}
\end{equation}
A similar (simpler) argument proves $|t_0^h-\frac{D}2| < \tfrac{D}{2^6}$.

We claim that for all $i\geq 1$, $t_{i+1}^h$ must lie between $t_i^h$ and $t_{i+2}^h$.  If not, and for example $t^h_{i+1} > \max\{t^h_i,t^h_{i+2}\}$, then $t^h_{i+1} - t^h_i > 0$ and $t^h_{i+1}-t^h_{i+2} > 0$, and applying inequality (\ref{E: nearly equal to D}) to $i$ and $i+1$, we see that
\[ |t_i^h-t_{i+2}^h| = |t_{i+1}^h-t_{i+2}^h - D  + (D-(t_{i+1}^h-t_i^h))| \leq \tfrac{D}{2^{i+5}} + \tfrac{D}{2^{i+6}}.\]
Hence by the triangle inequality (as above)
\begin{eqnarray*} d_{\WP}(X_i^h,X_{i+2}^h) & \leq & d_{\WP}(\hat r^h(t_i^h),\hat r^h(t_{i+2}^h)) + \tfrac{D}{2^{i+6}} + \tfrac{D}{2^{i+8}} \\
& \leq & |t_i^h-t_{i+2}^h| + \tfrac{D}{2^{i+6}} + \tfrac{D}{2^{i+8}}  < \tfrac{D}4.
\end{eqnarray*}
On the other hand, \cite[Lemma 3.2]{brockmargalit} implies that since $\gamma_i^h$ and $\gamma_{i+2}^h$ are not adjacent in $\cC(S_h)$, we must have $d_{\WP}(X_i^h,X_{i+2}^h) > D$, which is a contradiction.  A similar argument produces a contradiction if $t_{i+1}^h <\min\{ t_i^h,t_{i+2}^h\}$, hence as we claimed $t^h_{i+1}$ is between $t^h_i$ and $t^h_{i+2}$, and thus
 $\{t_i^h\}_i$ is an increasing sequence.

From (\ref{E: nearly equal to D}) (and the inequality $|t_0^h-\frac{D}2| < \tfrac{D}{2^6}$), we have
\begin{eqnarray*}
\Big| t_i^h - \left( \tfrac{D}2 + iD \right) \Big| & = & \Big| t_0^h + \sum_{j=1}^i t_j^h - t_{j-1}^h - \tfrac{D}2 - iD \Big| \\
& \leq & \Big| t_0^h - \tfrac{D}2 \Big| + \sum_{j=1}^i \Big| t_j^h - t_{j-1}^h - D \Big| \leq \tfrac{D}{2^6} + \sum_{j=1}^i \tfrac{D}{2^{j + 4}} < \tfrac{D}8.
\end{eqnarray*}
This proves part (3).

Finally, we note that part (1) follows from (\ref{E: nearly equal to D}), parts (2) and (3), and convexity of distance between two geodesics in a $\CAT(0)$ space.
 To see this, first note that for all $i \geq 0$
\begin{eqnarray*} 
d_{\WP}(\hat r^h(\tfrac{D}2 + iD),\hat r^h_c(\tfrac{D}2 + iD)) \!\!\!\! & \leq & \!\!\!\! d_{\WP}(\hat r^h(\tfrac{D}2 + iD),\hat r^h(t_i^h)) \! + \! d_{\WP}(\hat r^h(t_i^h),X_i^h)\\
& \leq &  \!\!\!\! |t_i^h - (\tfrac{D}2 + iD)| + \tfrac{D}{2^{i+6}} < \frac{D}8 + \frac{D}{2^{i+6}} < \tfrac{D}4.
\end{eqnarray*}
Thus, for all $i \geq 0$, convexity of the distance between geodesics implies
\[ d_{\WP}(\hat r^h(s),\hat r^h_c(s)) < \tfrac{D}4, \quad \text{for all}\; s \in \left[\tfrac{D}2 + iD,\tfrac{D}2 + (i+1)D \right] \]
(and for $s \in [0,\tfrac{D}2]$).  This proves (1) for all $s \geq 0$, completing the proof.
\end{proof}

\subsection{Sequences of times}

Throughout the following, we will always assume that for each $h = 0,1$, the sequence $\{e^h_i\}_i$ is chosen so that $e^h_i > K_i$ from Lemma~\ref{lem : close to concatenation}, and we write $\hat r^h,\hat r^h_c$ to denote the associated geodesics/quasi-geodesics.
We keep the same parameterization for $\hat r^0$ and $\hat r^0_c$ as above, but adjust the parameterization of $\hat r^1$ and $\hat r^1_c$ by precomposing with the maps $t \mapsto t-\tfrac{D}2$.  
This does not make sense for $t \in [0,\tfrac{D}2)$, so we define $\hat r^1$ and $\hat r^1_c$ to be constant on this interval. 

With this new parameterization, the sequences $\{t_i^1\}$ must be shifted by $\tfrac{D}2$, so that parts (1) and (2) of Lemma~\ref{lem : close to concatenation} remain valid.  
The conclusion in part (3) of the lemma then becomes
\begin{equation} \label{lem':new part (3)}
|t_i^0 - \left( \tfrac{D}2 + iD \right) | < \tfrac{D}8 \quad \mbox{ and } \quad  |t_i^1 - (i+1)D| < \tfrac{D}8.
\end{equation}

Identifying $\cS(\alpha) = \Teich(S_0) \times \Teich(S_1)$, we set
\[ \hat r = (\hat r^0,\hat r^1) \colon [0,\infty) \to \cS(\alpha) \subset \oT{S}.\]

\begin{notation}(Relabeling sequences)\label{not : seq k} 
To simplify some statements and avoid duplication in some of the arguments that follow, we make the following notational convention.  For $h = 0,1$ and $i \geq 0$, set
\begin{eqnarray*}
e_{2i + h} &=& e_i^h\\
\gamma_{2i+h} &=& \gamma_i^h\\
t_{2i+h} &=& t_i^h\\
X_{2i+h} &=& X_i^h.
\end{eqnarray*}
We will use the index $k$ for these sequences, and write $\{e_k\}$, $\{\gamma_k\}$, $\{t_k\}$, and $\{X_k\}$.  
We also let $\bar k \in \{0,1\}$ denote the residue of $k$ modulo $2$, and $i = i(k)$ for the floor of $k/2$.  Thus, when we need it, can write $e_k = e_i^{\bar k}$, etc.  
As an abuse of notation, we say things like ``$\hat r(t_k)$ is close to $X_k$'', though what we really mean is that $\hat r^{\bar k}(t_k)$ is close to $X_k$.  
We also view $\gamma_k$ as a curve on both $S$ and $S \setminus \alpha$, rather than just a curve on $S_{\bar k} \subset S \setminus \alpha \subset S$.
Finally, the following sequence of times will also be useful for us
\[ t_k' = \frac{t_k + t_{k+1}}2. \]
\end{notation}

\begin{prop} \label{prop : monotonicity and separation}
For all $k \geq 0$, $t_{k+1} - t_k > \tfrac{D}4$.  In particular,  $\{t_k\}_k$ is increasing.
Consequently, $t_k'-t_k > \tfrac{D}8$ and $t_{k+1}-t_k' > \tfrac{D}8$.
\end{prop}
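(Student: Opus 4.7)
The plan is to reduce everything to the explicit bounds from the shifted version of Lemma~\ref{lem : close to concatenation}(3), namely equation~(\ref{lem':new part (3)}). First I would unpack Notation~\ref{not : seq k}: for $k = 2i + \bar k$ with $\bar k \in \{0,1\}$, we have $t_k = t_i^{\bar k}$. The bounds in (\ref{lem':new part (3)}) then translate directly into
\begin{align*}
t_i^0 & \in \left( \tfrac{3D}{8} + iD,\; \tfrac{5D}{8} + iD \right),\\
t_i^1 & \in \left( \tfrac{7D}{8} + iD,\; \tfrac{9D}{8} + iD \right).
\end{align*}
These two intervals are disjoint, and one lies to the right of the other with a positive gap, so once the relabeling puts the values in the correct interleaved order, both monotonicity and a quantitative gap will fall out immediately.

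Next I would split into two cases for computing $t_{k+1} - t_k$. When $k = 2i$, we have $t_k = t_i^0$ and $t_{k+1} = t_i^1$, so
\[ t_{k+1} - t_k \;>\; \left(\tfrac{7D}{8} + iD\right) - \left(\tfrac{5D}{8} + iD\right) \;=\; \tfrac{D}{4}.\]
When $k = 2i+1$, we have $t_k = t_i^1$ and $t_{k+1} = t_{i+1}^0$, so
\[ t_{k+1} - t_k \;>\; \left(\tfrac{3D}{8} + (i+1)D\right) - \left(\tfrac{9D}{8} + iD\right) \;=\; \tfrac{D}{4}.\]
In either case, the strict inequality $t_{k+1} - t_k > \tfrac{D}{4}$ holds, which in particular forces $\{t_k\}_k$ to be strictly increasing.

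The final claim is then immediate from the definition $t_k' = (t_k + t_{k+1})/2$: both gaps $t_k' - t_k$ and $t_{k+1} - t_k'$ equal $(t_{k+1} - t_k)/2 > \tfrac{D}{8}$. Since everything reduces to substituting the numerical bounds from (\ref{lem':new part (3)}) into the two elementary case computations above, there is essentially no real obstacle here; the only point to watch is keeping the relabeling convention straight so that the interleaving of the two sequences $\{t_i^0\}_i$ and $\{t_i^1\}_i$ is correctly reflected in $\{t_k\}_k$, which is exactly what the choice of shift by $\tfrac{D}{2}$ in the reparameterization of $\hat r^1$ was designed to guarantee.
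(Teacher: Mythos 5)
Your proof is correct and follows essentially the same route as the paper: both deduce the gap $t_{k+1}-t_k > \tfrac{D}{4}$ directly from the shifted bounds in (\ref{lem':new part (3)}) via the same two parity cases, and then obtain the last claim from $t_k'$ being the midpoint. You simply write out the odd case that the paper leaves as "a similar computation."
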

\begin{proof}  For $k=2i$ (even), (\ref{lem':new part (3)}) implies
\[ t_{k+1}- t_k = t_i^1 - t_i^0 > \left( (i+1)D - \tfrac{D}8\right) - \left(\tfrac{D}2 + iD + \tfrac{D}8 \right) = \tfrac{D}2 - \tfrac{D}4 = \tfrac{D}4.\]
A similar computation verifies the claim for $k$ odd.

The last sentence follows from the first, and the fact that $t_k'$ is the average of $t_k$ and $t_{k+1}$.
\end{proof}
Figure~\ref{F:timing} provides a useful illustration of the relationship between $\{t_k\}$, $\{t_k'\}$, $\{\gamma_k\}$, and $\{X_k\}$.

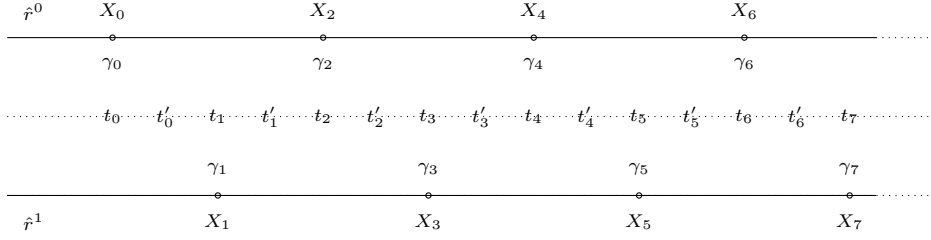
\begin{figure}[h]
\begin{center}
\begin{tikzpicture}[scale = .7]
\draw (0,1.5) --  (16.5,1.5);
\draw[dotted] (16.5,1.5) -- (17.5,1.5);
\draw (0,-1.5) --  (16.5,-1.5);
\draw[dotted] (0,-1.5) -- (17.5,-1.5);
\draw[dotted] (0,0) -- (17.5,0);
\draw (2,1.5) circle (.05cm);
\draw (4,-1.5) circle (.05cm);
\draw (6,1.5) circle (.05cm);
\draw (8,-1.5) circle (.05cm);
\draw (10,1.5) circle (.05cm);
\draw (12,-1.5) circle (.05cm);
\draw (14,1.5) circle (.05cm);
\draw (16,-1.5) circle (.05cm);
%\draw (18,1.5) circle (.05cm);
%\draw (20,-1.5) circle (.05cm);
%\draw (22,1.5) circle (.05cm);
%\draw (24,-1.5) circle (.05cm);
\node at (2,2) {\tiny $X_0$};
\node at (4,-2) {\tiny $X_1$};
\node at (6,2) {\tiny $X_2$};
\node at (8,-2) {\tiny $X_3$};
\node at (10,2) {\tiny $X_4$};
\node at (12,-2) {\tiny $X_5$};
\node at (14,2) {\tiny $X_6$};
\node at (16,-2) {\tiny $X_7$};
%\node at (18,2) {\tiny $X_8$};
%\node at (20,-2) {\tiny $X_9$};
%\node at (22,2) {\tiny $X_{10}$};
%\node at (24,-2) {\tiny $X_{11}$};
\node at (.5,2) {\tiny $\hat r^0$};
\node at (.5,-2) {\tiny $\hat r^1$};
\node at (2,1) {\tiny $\gamma_0$};
\node at (4,-1) {\tiny $\gamma_1$};
\node at (6,1) {\tiny $\gamma_2$};
\node at (8,-1) {\tiny $\gamma_3$};
\node at (10,1) {\tiny $\gamma_4$};
\node at (12,-1) {\tiny $\gamma_5$};
\node at (14,1) {\tiny $\gamma_6$};
\node at (16,-1) {\tiny $\gamma_7$};
%\node at (18,1) {\tiny $\gamma_8$};
%\node at (20,-1) {\tiny $\gamma_9$};
%\node at (22,1) {\tiny $\gamma_{10}$};
%\node at (24,-1) {\tiny $\gamma_{11}$};
\node at (2,0) {\tiny $t_0$};
\node at (3,0) {\tiny $t_0'$};
\node at (4,0) {\tiny $t_1$};
\node at (5,0) {\tiny $t_1'$};
\node at (6,0) {\tiny $t_2$};
\node at (7,0) {\tiny $t_2'$};
\node at (8,0) {\tiny $t_3$};
\node at (9,0) {\tiny $t_3'$};
\node at (10,0) {\tiny $t_4$};
\node at (11,0) {\tiny $t_4'$};
\node at (12,0) {\tiny $t_5$};
\node at (13,0) {\tiny $t_5'$};
\node at (14,0) {\tiny $t_6$};
\node at (15,0) {\tiny $t_6'$};
\node at (16,0) {\tiny $t_7$};
%\node at (17,0) {\tiny $t_{7.5}$};
%\node at (18,0) {\tiny $t_8$};
%\node at (19,0) {\tiny $t_{8.5}$};
%\node at (20,0) {\tiny $t_9$};
%\node at (21,0) {\tiny $t_{9.5}$};
%\node at (22,0) {\tiny $t_{10}$};
%\node at (23,0) {\tiny $t_{10.5}$};
%\node at (24,0) {\tiny $t_{11}$};
\end{tikzpicture}
\caption{The times $t_k$ and $t_k'$ are ``spaced out'' by at least $\tfrac{D}8$.  The former are the times when $\hat r = (\hat r^0,\hat r^1)$ is closest to the points $\{X_k\}$: the curve $\gamma_k$ is very short at time $t_k$.}
\label{F:timing}
\end{center}
\end{figure}

\begin{lem}  \label{lem : bounded length marking times} There exists $C > 0$, so that for all $k \geq 2$, we have
\begin{center} $\ell_{\gamma_k}(\hat r(t_{k-2}'))\leq C$ and $\ell_{\gamma_k}(\hat r(t_{k+1}')) \leq C$.\end{center}
Consequently, $\ell_{\gamma_k}(\hat r(t)) \leq C$ for all $t \in [t_{k-2}',t_{k+1}']$
\end{lem}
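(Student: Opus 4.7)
The plan is to combine the location estimates from Lemma~\ref{lem : close to concatenation} with $\Mod(S_{\bar k})$-equivariance and convexity of length functions along WP geodesics. By the symmetry swapping $S_0$ and $S_1$, I may assume $k=2i$ is even, so $\gamma_k=\gamma_i^0$ lies on $S_0$; since $\hat r=(\hat r^0,\hat r^1)$ takes values in the product stratum $\cS(\alpha)$, one has $\ell_{\gamma_k}(\hat r(t))=\ell_{\gamma_k}(\hat r^0(t))$. A parallel symmetry between $t_{k-2}'$ and $t_{k+1}'$ reduces me to handling $t_{k-2}'$.

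\textbf{Locating $\hat r^0(t_{k-2}')$.} Since $t_{k-2}=t_{i-1}^0$ and $t_{k-1}=t_{i-1}^1$, estimate (\ref{lem':new part (3)}) gives $|t_{k-2}'-(\tfrac{D}{2}+(i-\tfrac34)D)|\leq\tfrac{D}{8}$. Because $\hat r^0_c$ parametrizes $[X_{k-2},X_k]$ over the interval $[\tfrac{D}{2}+(i-1)D,\tfrac{D}{2}+iD]$, the point $\hat r^0_c(t_{k-2}')$ lies within $\tfrac{D}{8}$ of the $\tfrac14$-point $P$ of $[X_{k-2},X_k]$ measured from $X_{k-2}$. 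Combined with Lemma~\ref{lem : close to concatenation}(1) this yields $d_{\WP}(\hat r^0(t_{k-2}'),P)\leq\tfrac{3D}{8}$. Proposition~\ref{prop : monotonicity and separation} gives $t_{k-2}'-t_{k-2}\geq\tfrac{D}{8}$, and Lemma~\ref{lem : close to concatenation}(2) gives $d_{\WP}(\hat r^0(t_{k-2}),X_{k-2})\leq\tfrac{D}{128}$; arclength parametrization then forces $d_{\WP}(\hat r^0(t_{k-2}'),X_{k-2})>\tfrac{D}{16}$.

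\textbf{Reduction to a reference configuration.} As $\Mod(S_0)\cong\mathrm{SL}_2(\mathbb Z)$ acts transitively on oriented Farey edges, there is $\varphi\in\Mod(S_0)$ with $\varphi(X_{k-2})=X_0^0$, $\varphi(X_k)=X_1^0$, and $\varphi(\gamma_k)=\gamma_1^0$. Since $\varphi$ is a WP isometry, $\ell_{\gamma_k}(\hat r^0(t_{k-2}'))=\ell_{\gamma_1^0}(\varphi(\hat r^0(t_{k-2}')))$, and $\varphi(\hat r^0(t_{k-2}'))$ lies in the fixed set
\[ A=\{Z\in\oT{S_{1,1}}:d_{\WP}(Z,P')\leq\tfrac{3D}{8}\text{ and }d_{\WP}(Z,X_0^0)\geq\tfrac{D}{16}\}, \]
where $P'$ is the $\tfrac14$-point of $[X_0^0,X_1^0]$ from $X_0^0$. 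Each pinch stratum of $\oT{S_{1,1}}$ is a single point with cone-like local structure, so $\oT{S_{1,1}}$ is locally compact; being complete, it is proper by Hopf-Rinow, hence $A$ is compact. The function $\ell_{\gamma_1^0}$ takes the value $\infty$ precisely on pinch points $X_j^0$ with $j\neq 1$; for any $j\neq 0,1$ the triangle inequality with the Brock--Margalit bound $d_{\WP}(X_j^0,X_0^0)\geq D$ gives $d_{\WP}(X_j^0,P')\geq D-\tfrac{D}{4}=\tfrac{3D}{4}>\tfrac{3D}{8}$, so the only pole of $\ell_{\gamma_1^0}$ inside $\overline{B_{3D/8}(P')}$ is $X_0^0$, which $A$ avoids. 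Therefore $\ell_{\gamma_1^0}|_A$ is continuous and finite, hence bounded by a universal constant $C$ depending only on $D$.

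\textbf{Consequence via convexity.} For $t\in[t_{k-2}',t_{k+1}']$ the function $t\mapsto\ell_{\gamma_k}(\hat r(t))=\ell_{\gamma_k}(\hat r^0(t))$ is convex by Wolpert's convexity of geodesic length functions along WP geodesics, so it is dominated by the maximum of its endpoint values, giving the stated bound $C$. The hardest point is the compactness of $A$ in Step 2: it relies on local compactness of $\oT{S_{1,1}}$, which is special to the once-punctured torus---for higher-complexity subsurfaces the WP completion is not locally compact---but that is precisely the regime we are in.
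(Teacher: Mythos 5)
Your geometric setup is sound and runs parallel to the paper's: you locate $\hat r^0(t_{k-2}')$ within $\tfrac{3D}{8}$ of the quarter-point of $[X_{k-2},X_k]$ and at distance at least $\tfrac{D}{16}$ from $X_{k-2}$, translate to a fixed reference edge by $\Mod(S_{\bar k})$--equivariance, and finish with convexity of length functions. (The paper instead uses convexity of the distance between the geodesics $[\hat r(t_{k-2}),\hat r(t_k)]$ and $[X_{k-2},X_k]$ to produce a point $Y_k$ on the Farey edge near $\hat r(t_{k-2}')$ and then bounds $Y_k$ away from both endpoints; the two routes are interchangeable.) The problem is the step you yourself flag as the crux: the claim that $\oT{S_{1,1}}$ is locally compact is false, so the Hopf--Rinow argument for properness is not available. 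Even for the once-punctured torus, no metric ball about a pinch point $X_\sigma$ is totally bounded: pick an interior point $X$ with $\ell_\sigma(X)$ small enough that $d_{\WP}(X,X_\sigma)<\rho/2$; since each $D_\sigma^n$ is a WP isometry fixing $X_\sigma$, the entire orbit $\{D_\sigma^n X\}_{n\in\mathbb Z}$ lies in the $\rho$--ball about $X_\sigma$, and by proper discontinuity of the $\Mod(S_{1,1})$--action it is an infinite, uniformly separated set. Equivalently, the twist vector field has WP norm $\asymp \ell_\sigma^{3/2}$, so the level set $\{\ell_\sigma=\ell_0\}$ is a curve of infinite WP length at bounded distance from $X_\sigma$; the strata being zero-dimensional does not remove the unbounded twisting of nearby interior points, which is what kills local compactness in every complexity.

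The compactness of $A$ is nevertheless true, and your own estimates essentially contain the correct reason: you show $A$ is a closed, bounded set whose distance to \emph{every} completion point is at least $\tfrac{D}{16}$ (at least $\tfrac{D}{16}$ from $X_0^0$ and at least $\tfrac{3D}{8}$ from every other pinch point, via the Brock--Margalit bound). By Wolpert's expansion $d_{\WP}(\cdot,\cS(\sigma))=(2\pi\ell_\sigma)^{1/2}+O(\ell_\sigma^2)$, this forces a uniform positive lower bound on the systole over $A$, so $A$ is a closed, WP-bounded subset of a thick part of $\Teich(S_{1,1})$; such sets are compact (Mumford compactness in moduli space together with proper discontinuity of the action). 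This is also what underlies the paper's assertion that its ball $B_k$, which stays a definite distance from all completion points, is compact. Replace your local-compactness justification with this positive-distance-to-the-completion-locus argument and the proof goes through; as written, the key step rests on a false statement.
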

\begin{proof} We prove the bound on $\ell_{\gamma_k}(\hat r(t_{k-2}'))$.  The proof of the other bound is similar.

According to part (2) of Lemma~\ref{lem : close to concatenation}, for $k \geq 2$
\[ d_{\WP}(\hat r^{\bar k}(t_{k-2}),X_{k-2})\leq \tfrac{D}{2^6}\;\text{and}\; d_{\WP}(\hat r^{\bar k}(t_k),X_k) \leq \tfrac{D}{2^6}. \]
By convexity of distance between geodesics, it follows that there is a point $Y_k \in [X_{k-2},X_k]$ such that
\[ d_{\WP}(\hat r^{\bar k}(t_{k-2}'),Y_k) \leq \tfrac{D}{2^6}.\]

On the other hand, by Proposition~\ref{prop : monotonicity and separation} we have
\begin{eqnarray*}
 t_{k-2}+\tfrac{D}8 < t_{k-2}' &=&(t'_{k-2}-t_{k-1})+(t_{k-1}-t'_{k-1})+(t'_{k-1}-t_k)+t_k \\
 &<& t_k- \tfrac{3D}8 .
 \end{eqnarray*}

Therefore, by the triangle inequality, we see that
\begin{eqnarray*}
d_{\WP}(Y_k,X_{k-2}) & \geq & \!\! (t_{k-2}' - t_{k-2}) -  \! d_{\WP}(\hat r^{\bar k}(t_{k-2}'),Y_k)  - \! d_{\WP}(X_{k-2},\hat r^{\bar k}(t_{k-2}))\\
& > & \!\!  \tfrac{D}8  - \tfrac{D}{2^6} - \tfrac{D}{2^6} > \tfrac{D}{16}
\end{eqnarray*}
Similar computations show that
\[ d_{\WP}(Y_k,X_k) \geq \tfrac{D}4.\]
So, $Y_k$ is further than $\tfrac{D}{16}$ from the endpoints of $[X_{k-2},X_k]$, and so less than $\tfrac{7D}{16}$ from the midpoint.  In particular, the closed ball of radius $\tfrac{D}{32}$ in $\oT{S_{\bar k}}$ about $Y_k$ is contained in the closed ball $B_k \subset \oT{S_{\bar k}}$ of radius $\tfrac{15D}{32}$ centered at the midpoint $M_k$ of $[X_{k-2},X_k]$.

We claim that $B_k \subset \Teich(S_{\bar k})$ (that is, $B_k$ contains no completion points), and hence $B_k$ is compact.
To prove the claim, it suffices to show that the closest point to $M_k$ in $\oT{S_{\bar k}} \setminus \Teich(S_{\bar k})$ is one of the endpoints $X_{k-2}$ or $X_k$.
For this, let
\[ X \in \oT{S_{\bar k}} \setminus (\Teich(S_{\bar k}) \cup \{X_{k-2},X_k\})\]
be any completion point.  According to \cite[Lemma~3.2]{brockmargalit}, we have that  $D = d_{\WP}(X_{k-2},X_k) \leq d_{\WP}(X_k,X)$.  
Since triangles in $\oT{S_{\bar k}}$ are nondegenerate (meaning that edges meet only in a vertex), $M_k$ is not contained in the geodesic segment $[X_k,X]$.  Thus, the (strict) triangle inequality implies
\[ 2 d_{\WP}(M_k,X_k) =  D  \leq d_{\WP}(X_k,X) < d_{\WP}(X_k,M_k)  + d_{\WP}(M_k,X).\]
Therefore $d_{\WP}(M_k,X_k) < d_{\WP}(M_k,X)$.
%To see this, consider a triangle with vertices $X_{k-2}, X_k$ and another completion point $X\in\oT{S_{\bar k}}$. 
%The two sides $[X,X_{k-2}]$ and $[X,X_k]$
%are at least as long as the side $[X_{k-2},X_2]$, 
%so an elementary argument using the comparison triangle shows that $d_{\WP}(X,Y)\geq \frac{d_{\WP}(X_k,Y)+d_{\WP}(X_{k-2},Y)}{2}$
%which implies that 
%$$d_{\WP}(Y,X)\geq \min\{d_{\WP}(Y,X_k),d_{\WP}(Y,X_{k-2})\}.$$

We now see that the closed ball of radius $\tfrac{D}{32}$ about $Y_k$ is contained in the $\Mod(S_{\bar k})$--orbit of a single compact set in $\Teich(S_{\bar k})$, namely the closed ball of radius $\tfrac{15D}{32}$ about the midpoint of a single Farey edge.
Therefore, the length of $\gamma_k$ (the curve pinched at $X_k$) is uniformly bounded in the $\tfrac{D}{32}$--neighborhood of $Y_k$, independent of $k$ (and independent of the sequence $\{e_k\}$).  Since $\hat r^{\bar k}(t_{k-2}')$ lies in this neighborhood, $\ell_{\gamma_k}(\hat r^{\bar k}(t_{k-2}'))$ is uniformly bounded, as required.

The proof of the bound on $\ell_{\gamma_k}(\hat r^{\bar k}(t_{k+1}'))$ is entirely analogous, using the geodesic segment $[X_k,X_{k+2}]$ in place of $[X_{k-2},X_k]$.   
The very last statement follows from convexity of length-functions along WP geodesics \cite[\S 3.3]{wol}.
\end{proof}

\begin{cor} \label{cor:special hat times} For all $k \geq 2$ and $j = k-1,k,k+1,k+2$, we have
\[ \ell_{\gamma_j}(\hat r(t_k')) \leq C. \]
\end{cor}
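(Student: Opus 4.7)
The corollary is essentially a bookkeeping consequence of Lemma~\ref{lem : bounded length marking times}, which guarantees that $\ell_{\gamma_m}(\hat r(t))\le C$ for every $t\in[t'_{m-2},t'_{m+1}]$ whenever $m\ge 2$. My plan is to apply this bound with $m=j$ for each of the four indices $j=k-1,k,k+1,k+2$, and observe that in each case $t'_k$ lies in the corresponding interval $[t'_{j-2},t'_{j+1}]$.

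More concretely, for $j=k-1$ the interval is $[t'_{k-3},t'_k]$ and $t'_k$ sits at its right endpoint; for $j=k$ and $j=k+1$ the point $t'_k$ lies inside $[t'_{k-2},t'_{k+1}]$ and $[t'_{k-1},t'_{k+2}]$, respectively; and for $j=k+2$ the point $t'_k$ is the left endpoint of $[t'_k,t'_{k+3}]$. Each inclusion is immediate once one knows the sequence $\{t'_m\}_m$ is strictly increasing, which follows from Proposition~\ref{prop : monotonicity and separation}: the $t_m$ are themselves strictly increasing with consecutive gaps at least $D/4$, and $t'_m=(t_m+t_{m+1})/2$ interlaces them.

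The one delicate point, and the only real obstacle, is the boundary case $k=2$, $j=1$, for which Lemma~\ref{lem : bounded length marking times} as stated does not literally apply (its hypothesis $m\ge 2$ fails when $m=j=1$). My plan here is to rerun the half of the proof of Lemma~\ref{lem : bounded length marking times} that produces the bound $\ell_{\gamma_m}(\hat r(t'_{m+1}))\le C$, but with $m=1$: one uses convexity of WP distance along the geodesic segment $[X_1,X_3]$ in $\oT{S_1}$, together with Proposition~\ref{prop : monotonicity and separation}, to locate $\hat r^1(t'_2)$ in a small ball about a point of $[X_1,X_3]$ that is bounded away from both endpoints. The same compact-ball argument used in the lemma, combined with $\Mod(S_1)$--equivariance, then shows this ball sits in the $\Mod(S_1)$--orbit of a fixed compact set in $\Teich(S_1)$, on which $\ell_{\gamma_1}$ is uniformly bounded. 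No new ingredients beyond those already in the proof of Lemma~\ref{lem : bounded length marking times} are required.
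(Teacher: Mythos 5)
Your proposal is correct and follows essentially the same route as the paper, which simply notes that $t_k'$ lies in $\bigcap_{j=k-1}^{k+2}[t'_{j-2},t'_{j+1}]$ and invokes Lemma~\ref{lem : bounded length marking times} for each $j$. Your observation about the boundary case $k=2$, $j=1$ (where the lemma's hypothesis $j\ge 2$ fails and $t'_{-1}$ is not even defined) is a legitimate point that the paper's one-line proof elides, and your patch --- rerunning the $[X_1,X_3]$ half of the lemma's argument --- is exactly what is needed.
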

\begin{proof} According to Lemma~\ref{lem : bounded length marking times}, the curve $\gamma_j$ has length at most $C$ on the interval $[t_{j-2}',t_{j+1}']$.  The corollary thus follows from the fact that
\[ \{t_k'\} = \bigcap_{j=k-1}^{k+2} [t_{j-2}',t_{j+1}'].\]
\end{proof}

\subsection{Intersection number estimates}

We will require the following estimate for the intersection number of a curve $\delta$ and the curves $\gamma^h_i$ in terms of the numbers $e^h_i, \; i\geq 0$.

\begin{lem}\label{lem : intersection}
Given $\delta \in \cC_0(S)$ with $\I(\delta,\alpha) \neq 0$, there exists $\kappa = \kappa(\delta) \geq 1$ so that for $h=0,1$ and all $i$ sufficiently large we have 
\[\frac{1}{\kappa}e^h_{i-1}\leq \I(\delta,\gamma^h_i)\leq \kappa I_h(i),\]
for $I_h(i)=\sum_{J} \prod_{j\in J} e^h_j$ where $J$ runs over all subsets of $\{0,\ldots,i-1\}$ exactly once.
\end{lem}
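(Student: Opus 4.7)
The plan is to exploit the recursive structure of $\{\gamma_i^h\}$ given by (\ref{eq : tw gh i-1, i+1}) to obtain a linear recursion for the sequence $a_i := \I(\delta,\gamma_i^h)$, and then extract the two bounds separately from this recursion.

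\textbf{Step 1 (Recursion).} Apply the standard Dehn-twist intersection inequality
\[ \bigl| \I(D_\gamma^n\beta,\alpha)-|n|\,\I(\gamma,\alpha)\I(\gamma,\beta)\bigr|\le \I(\alpha,\beta) \]
to $\gamma_{i+1}^h=D_{\gamma_i^h}^{\pm e_i^h}(\gamma_{i-1}^h)$. Since consecutive convergents are adjacent in the Farey graph and therefore satisfy $\I(\gamma_{i-1}^h,\gamma_i^h)=1$, this gives
\[ |a_{i+1}-e_i^h\,a_i|\le a_{i-1}. \]

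\textbf{Step 2 (Upper bound).} From $a_{i+1}\le e_i^h\,a_i+a_{i-1}$ a straightforward induction on $i$ yields $a_i\le C\prod_{j=0}^{i-1}(1+e_j^h)$, where $C$ depends only on the base values $a_0,a_1$. The inductive step uses $e_i^h+\tfrac{1}{1+e_{i-1}^h}\le 1+e_i^h$. Expanding by distributivity, $\prod_{j=0}^{i-1}(1+e_j^h)=\sum_{J\subseteq\{0,\ldots,i-1\}}\prod_{j\in J}e_j^h = I_h(i)$, which establishes the upper bound with $\kappa\ge C$.

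\textbf{Step 3 (Lower bound via $\PML$-convergence).} Set $c_i:=1/q_i^h$, noting that $q_i^h=\I(\gamma_{-1}^h,\gamma_i^h)$ since $\gamma_{-1}^h$ is horizontal and $\gamma_i^h$ has slope $q_i^h/p_i^h$. The geodesic $\{\gamma_i^h\}_i$ in $\cC(S_h)$ converges to the minimal filling lamination $\lambda_h$; because $S_h$ is a once-punctured torus, $\lambda_h$ is uniquely ergodic, so the normalized sequence $c_i\gamma_i^h$ converges in $\ML(S_h)$ to a fixed measured lamination $\bar\lambda_h$ supported on $\lambda_h$. Continuity of intersection numbers then gives
\[ \frac{a_i}{q_i^h}=c_i\,\I(\delta,\gamma_i^h)\longrightarrow \I(\delta,\bar\lambda_h). \]
The hypothesis $\I(\delta,\alpha)\neq 0$ means $\delta\cap S_h$ contains essential arcs, and since $\lambda_h$ fills $S_h$ each such arc meets $\bar\lambda_h$ positively, so $\I(\delta,\bar\lambda_h)>0$. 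Combining with the continued-fraction recursion $q_i^h=e_{i-1}^h q_{i-1}^h+q_{i-2}^h\ge e_{i-1}^h$ yields $a_i\ge \tfrac{1}{\kappa}e_{i-1}^h$ for all $i$ sufficiently large, after enlarging $\kappa$.

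\textbf{Main obstacle.} The delicate step is the $\PML$-convergence used in the lower bound: one must argue that $\{c_i\gamma_i^h\}$ has a genuine limit (not merely a convergent subsequence) and that the limit pairs strictly positively with $\delta$. Unique ergodicity of minimal irrational laminations on $S_{1,1}$ provides the former, while the filling property of $\lambda_h$ together with the essentiality of $\delta\cap S_h$ (guaranteed by $\I(\delta,\alpha)\neq 0$) provides the latter. The upper bound, by contrast, is purely a combinatorial consequence of the recursion in Step~1.
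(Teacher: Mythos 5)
Your proof is correct, and it reaches the two bounds by a genuinely different route from the paper's. The paper decomposes $\delta\cap S_h$ into geometric arcs, identifies arcs and the curves $\gamma^h_i$ with classes in $H_1(S_h,\partial S_h;\mathbb Z)\cong\mathbb Z^2$, and computes $\I(\tau_h,\gamma^h_i)=|a_hq^h_i-b_hp^h_i|$ exactly; both bounds then follow from the single asymptotic $|a_hq^h_i-b_hp^h_i|/q^h_i\to|a_h-x_hb_h|>0$ together with $e^h_{i-1}\leq q^h_i\leq I_h(i)$ (the continuant expansion of $q^h_i$ being a subsum of $I_h(i)$). Your upper bound bypasses the slope formula entirely: the twist inequality applied to $\gamma^h_{i+1}=D_{\gamma^h_i}^{\pm e^h_i}(\gamma^h_{i-1})$, with $\I(\gamma^h_{i-1},\gamma^h_i)=1$, gives the recursion $|a_{i+1}-e^h_ia_i|\leq a_{i-1}$, and the induction $a_i\leq C\prod_{j<i}(1+e^h_j)=C\,I_h(i)$ is purely combinatorial; this is arguably cleaner and makes transparent why $I_h(i)$ is the natural right-hand side. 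Your lower bound is a soft repackaging of the paper's explicit computation: rather than evaluating $\lim a_i/q^h_i$ by hand, you use convergence of $\gamma^h_i/q^h_i$ in $\ML$ and continuity of intersection number; on the torus the normalized coordinates $(p^h_i/q^h_i,1)\to(x_h,1)$ converge outright, so the appeal to unique ergodicity, while valid, is not strictly needed, and positivity of $\I(\delta,\bar\lambda_h)$ (your filling argument) is exactly the paper's observation that $|a_h-x_hb_h|\neq 0$ because $x_h$ is irrational. One small point: your constant $C=\max(a_0,a_1)$ and the limit $\I(\delta,\bar\lambda_h)$ depend on the fixed construction data as well as on $\delta$, but so does the paper's $\kappa$ (through $x_h$), so this is consistent with the statement of the lemma.
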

\begin{proof}

 Suppose that $\delta\cap S_h$, $h=0,1$, consists of $n_h$ geometric arcs with end points on $\alpha=\partial S_h$ 
 (geometric arcs are proper arcs on the surface and homotopic geometric arcs are not identified).
Let $[0;e^h_1,e^h_2,\ldots, e^h_{i-1},\ldots]$ be a continued fraction expansion as in $\S$\ref{subsec : contfrac} 
and recall that the curve $\gamma^h_i$ has slope reciprocal to $\frac{p^h_i}{q^h_i}$ the $i^{th}$ convergent of the continued fraction expansion.
 Let $\tau_h$ be a geometric arc in $\delta\cap S_h$ with the largest intersection number with $\gamma^h_i$ and let $\frac{a_h}{b_h}$ be the reciprocal of the slope of $\tau_h$.
  Then, since $\gamma^h_i\subset S_h$, we have that $\I(\tau_h,\gamma^h_i)= |a_hq^h_i-b_hp^h_i|$: to see this, observe that orienting $\gamma^h_i$ and $\tau_h$, these represent the (relative) homology classes $(a_h,b_h)$ and $(p^h_i,q^h_i)$, respectively, in $H_1(S_h,\partial S_h; \mathbb Z) \cong Z^2$, and $\I(\tau_h,\gamma^h_i)$ is the absolute value of the algebraic intersection number, which is the geometric intersection number on a punctured torus.
%  ; one can easily verify the equality for $\frac{p^h_i}{q^h_i}=\frac{1}{0}$ and apply an element of $SL_2\mathbb{Z}$ to obtain the general case. 

 The standard recursive formula for convergents of continued fraction expansions gives us
 $q^h_{i}=e^h_{i-1}q^h_{i-1}+q^h_{i-2}$ (see e.g. \cite[Theorem 1]{Khinchin-contfrac}; recall our index convention in $\S$\ref{subsec : contfrac}), we also have  
  that $q^h_0=1$ and $q^h_1=e^h_0$.
 Then we can easily verify by induction on $i$ that 
 \begin{equation}\label{eq : qhi}
 q^h_i=\sum_{J\subseteq \{0,\ldots,i-1\}}\prod_{j\in J}e^h_j
 \end{equation}
  where each subset $J$ appears at most once in the sum. 
  Now since $\lim_{i\to\infty}\frac{p^h_i}{q^h_i}=x_h$, where the irrational number  $x_h$ is the reciprocal of the slope of $\lambda_h$, we have 
\[
\lim_{i\to\infty}\frac{|a_hq^h_i-b_hp^h_i|}{q^h_i}=\lim_{i\to\infty}|a_h-\left(\frac{p^h_i}{q^h_i}\right)b_h|= |a_h-x_hb_h|.
\]
Thus, for $i$ sufficiently large 
\[
|a_hq^h_i-b_hp^h_i|\leq 2|a_h-x_hb_h| q^h_i\leq 2|a_h-x_hb_h| I_h(i).
\]
Then since $\tau_h$ is a geometric arc with the largest intersection number with $\gamma^h_i$ and since there are $n_h$ geometric arcs in $\delta\cap S_h$ we have 
\begin{equation}\label{eq : ub}
\I(\delta,\gamma^h_i)\leq 2n_h|a_h-x_hb_h| I_h(i).
\end{equation}
 Furthermore, by (\ref{eq : qhi}), $q^h_i\geq \prod_{j=0}^{i-1}e^h_j\geq e^h_{i-1}$. From this inequality and the above limit we deduce that the inequality
\begin{equation}\label{eq : lb}
|a_hq^h_i-b_hp^h_i|\geq \frac{1}{2}|a_h-x_hb_h| q^h_i\geq \frac{1}{2}|a_h-x_hb_h| e^h_{i-1}
\end{equation}
holds for all $i$ sufficiently large. 

Now from inequalities (\ref{eq : ub}) and (\ref{eq : lb}) we see that 
the inequalities of the lemma hold for $\kappa= \max\{2n_h|a_h-x_hb_h|,\frac{2}{|a_h-x_hb_h|}:h=0,1\}$.
\end{proof}

For any $k\in\mathbb{N}$, appealing to the Notation~\ref{not : seq k}, let $I(k)=I_{\bar{k}}(i)$ where $k=2i+\bar{k}$. The conclusion of Lemma~\ref{lem : intersection} then becomes
\begin{equation} \label{eqn:intersection index change}
\frac1\kappa e_{k-2} \leq i(\delta,\gamma_k) \leq \kappa I(k).
\end{equation}
For the remainder of the paper, we assume that the sequence $\{e_k\}_k$ satisfies the additional growth condition 
\begin{equation}\label{eq : gr1}
\lim_{k \to \infty} \frac{I(k)}{e_k} = 0, \;\text{and}\;  \lim_{k\to\infty}\frac{I(k+1)}{e_k}=0. 
\end{equation}
This is possible since $I(k)$ depends only on $\{e_j\}_{j=0}^{k-2}$.

With this convention, we have the following corollary of Lemma~\ref{lem : intersection}.
\begin{cor} \label{cor : rel small intersect}
For any curve $\delta \in \cC_0(S)$ with $\I(\delta,\alpha) \neq 0$ we have
\[ \lim_{k \to \infty} \frac{\I(\delta,\gamma_k)}{e_k} = 0, \;\text{and}\; \lim_{k \to \infty} \frac{\I(\delta,\gamma_{k+1})}{e_k}=0.\]
\end{cor}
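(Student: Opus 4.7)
The plan is essentially to unwind the definitions and apply the bounds already established. By Lemma~\ref{lem : intersection}, with the index-relabeling convention, we recorded in Equation~(\ref{eqn:intersection index change}) that for any curve $\delta$ with $\I(\delta,\alpha) \neq 0$ there is a constant $\kappa = \kappa(\delta) \geq 1$ so that, for all $k$ large enough,
\[ \I(\delta,\gamma_k) \leq \kappa\, I(k) \quad \text{and} \quad \I(\delta,\gamma_{k+1}) \leq \kappa\, I(k+1).\]
These are the only facts needed as input.

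The next step is simply to divide through by $e_k$, giving
\[ \frac{\I(\delta,\gamma_k)}{e_k} \leq \kappa\,\frac{I(k)}{e_k} \quad \text{and} \quad \frac{\I(\delta,\gamma_{k+1})}{e_k} \leq \kappa\,\frac{I(k+1)}{e_k}.\]
Now invoke the growth hypothesis~(\ref{eq : gr1}) on the sequence $\{e_k\}_k$, which was imposed precisely for this purpose: it says that both $I(k)/e_k$ and $I(k+1)/e_k$ tend to $0$ as $k\to\infty$. Since $\kappa$ depends only on $\delta$ (not on $k$), the right-hand sides tend to $0$, and hence so do the left-hand sides, which is the claim.

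There is no real obstacle to overcome here; the content of the corollary is entirely contained in Lemma~\ref{lem : intersection} together with the combinatorial choice encoded in~(\ref{eq : gr1}). The only thing worth emphasizing in the write-up is the legitimacy of the growth condition~(\ref{eq : gr1}): because $I(k)$ depends only on $e_0,\ldots,e_{k-2}$ (via the recursion $q_i^h = e_{i-1}^h q_{i-1}^h + q_{i-2}^h$ used in the proof of Lemma~\ref{lem : intersection}), we are free, after specifying $e_0,\ldots,e_{k-2}$, to choose $e_{k-1}$ and $e_k$ large enough that $I(k)/e_k$ and $I(k+1)/e_k$ are as small as we wish. Thus the sequence $\{e_k\}$ can be built inductively, compatibly with all previously imposed lower bounds (in particular the $K_i$ from Lemma~\ref{lem : close to concatenation}).
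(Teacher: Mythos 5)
Your proposal is correct and matches the paper, which states this corollary without proof as an immediate consequence of the upper bound in~(\ref{eqn:intersection index change}) and the growth condition~(\ref{eq : gr1}). Your added remark on why~(\ref{eq : gr1}) can be arranged inductively (since $I(k)$ and $I(k+1)$ depend only on $e_0,\ldots,e_{k-1}$) is exactly the justification the paper gives when imposing that condition.
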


%%%%%%%%%%%%%%%%%%%%%%%%%%%
\subsection{Geodesics in $\Teich(S)$ and bounded length curves.}
%%%%%%%%%%%%%%%%%%%%%%%%%%%

We begin by recalling \cite[Corollary 3.5]{wpbehavior} and the inequality inside its proof, which we will use in some of the estimates in this section. 

\begin{lem}\label{lem : dl}
Given $c>0$ let $l,a\in[0,c]$ with $l>a$. Suppose that for a curve $\beta\in\cC_0(S)$ and points $X,X'\in\Teich(S)$ we have
 $\ell_\beta(X)\leq l-a$ and $\ell_\beta(X')\geq l$, then 
\[d_{\WP}(X,X')\geq \frac{a}{\sqrt{\frac{2}{\pi}l+O(l^4)}},\]
 where the constant of the $O$--notation depends only on $c$.
\end{lem}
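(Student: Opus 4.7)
The plan is to derive the estimate from Wolpert's formula for the Weil-Petersson gradient of a length function, combined with convexity of length functions along WP geodesics. Specifically, Wolpert's asymptotic formula gives
\[ \|\grad \ell_\beta\|_{\WP}^2 \;=\; \frac{2\ell_\beta}{\pi} + O(\ell_\beta^4), \]
where the implicit constant is universal. Since $l \leq c$, this says that on any set where $\ell_\beta \leq l$, the gradient has norm at most $\sqrt{\tfrac{2}{\pi}l + O(l^4)}$ with the $O$-constant depending only on $c$.

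Next, let $\gamma \colon [0,T] \to \Teich(S)$ be the unit-speed WP geodesic from $X$ to $X'$, where $T = d_{\WP}(X,X')$. Since $\ell_\beta(\gamma(0)) \leq l - a < l \leq \ell_\beta(\gamma(T))$, continuity yields times $0 \leq t_1 < t_2 \leq T$ with $\ell_\beta(\gamma(t_1)) = l - a$ and $\ell_\beta(\gamma(t_2)) = l$. Convexity of $t \mapsto \ell_\beta(\gamma(t))$ along WP geodesics (Wolpert, cited in \cite{wol}) guarantees that this function attains its maximum on $[t_1,t_2]$ at an endpoint, so $\ell_\beta(\gamma(t)) \leq l$ for every $t \in [t_1,t_2]$.

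Now I would estimate:
\[ a \;=\; \ell_\beta(\gamma(t_2)) - \ell_\beta(\gamma(t_1)) \;=\; \int_{t_1}^{t_2} \frac{d}{dt} \ell_\beta(\gamma(t))\, dt \;\leq\; \int_{t_1}^{t_2} \|\grad \ell_\beta\|_{\WP}\, dt. \]
Applying the gradient bound and $\ell_\beta \leq l$ on $[t_1,t_2]$ gives
\[ a \;\leq\; (t_2 - t_1)\sqrt{\tfrac{2}{\pi}l + O(l^4)} \;\leq\; T\sqrt{\tfrac{2}{\pi}l + O(l^4)}, \]
and rearranging yields the desired inequality.

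There isn't really a hard step here: the proof is essentially an application of the fundamental theorem of calculus together with two well-established facts from Wolpert's work (the gradient asymptotic and convexity of length). The only thing requiring attention is keeping track of the $O(l^4)$ term uniformly for $l \in [0,c]$, which is automatic because Wolpert's expansion is uniform on any bounded range of $\ell_\beta$.
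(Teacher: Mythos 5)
Your argument is correct: integrating Wolpert's gradient expansion $\|\grad \ell_\beta\|_{\WP}^2 = \frac{2}{\pi}\ell_\beta + O(\ell_\beta^4)$ along the unit-speed WP geodesic from $X$ to $X'$, with convexity of $\ell_\beta$ used to keep $\ell_\beta \leq l$ on the subsegment $[t_1,t_2]$, gives exactly the stated bound. The paper itself gives no proof---it imports the statement from \cite[Corollary 3.5]{wpbehavior}---and your derivation is essentially the standard argument behind that cited result, so there is nothing further to reconcile.
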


\begin{lem} \label{lem : ep1}
There is an $\ep_1 > 0$ and a $C' > 0$ so that for all points $Y$ in the $\ep_1$--neighborhood of $\hat r^{\bar k}(t_k')$ and all $j = k-1,k,k+1,k+2$, we have
\[ \ell_{\gamma_j}(Y) < C'. \]
\end{lem}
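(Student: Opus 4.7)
The plan is to upgrade the pointwise bound of Corollary~\ref{cor:special hat times} to a bound on a uniform WP-neighborhood using Lemma~\ref{lem : dl}. Concretely, let $C$ be the constant from Corollary~\ref{cor:special hat times}, so for every $k\geq 2$ and every $j\in\{k-1,k,k+1,k+2\}$ we have $\ell_{\gamma_j}(\hat r(t_k'))\leq C$. Set $C'=C+1$. The key observation is that Lemma~\ref{lem : dl} (with $\beta=\gamma_j$, $l=C+1$, $a=1$, $c=C+1$) gives a quantitative lower bound on the WP-distance needed for the length of $\gamma_j$ to increase by a unit starting from a value bounded by $C$: namely, any point $Y$ with $\ell_{\gamma_j}(Y)\geq C+1$ must satisfy
\[
d_{\WP}\!\bigl(Y,\hat r(t_k')\bigr)\geq \frac{1}{\sqrt{\tfrac{2}{\pi}(C+1)+O((C+1)^4)}}=:\ep_1.
\]
Since $C$ is absolute, so is $\ep_1$. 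Taking the contrapositive, any $Y$ within WP-distance $\ep_1$ of $\hat r(t_k')$ satisfies $\ell_{\gamma_j}(Y)<C'$ simultaneously for each of the four relevant indices $j$, which is precisely the conclusion of the lemma.

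The only mild subtlety, which I would address briefly, is that the reference point $\hat r(t_k')$ lies in the completion stratum $\cS(\alpha)\subset\oT{S}$ rather than in $\Teich(S)$, whereas Lemma~\ref{lem : dl} is phrased for pairs in $\Teich(S)$. This is not a genuine obstacle: length functions extend continuously to $\oT{S}$ and are convex along WP geodesics in the $\CAT(0)$ completion, so one obtains the same distance lower bound by taking limits along any sequence in $\Teich(S)$ approaching $\hat r(t_k')$, or equivalently by applying Lemma~\ref{lem : dl} directly in the product stratum after noting that the length of $\gamma_j$ at a point of $\cS(\alpha)$ equals the hyperbolic length of $\gamma_j$ in the appropriate factor. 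Since both $\ep_1$ and $C'$ depend only on $C$, they are uniform in $k$, which is what the lemma requires.
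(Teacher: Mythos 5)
Your proposal is correct and follows essentially the same route as the paper: take the pointwise bound $\ell_{\gamma_j}(\hat r^{\bar k}(t_k'))\leq C$ from Corollary~\ref{cor:special hat times} and propagate it to a uniform neighborhood via Lemma~\ref{lem : dl}, with the paper simply keeping the gap $a$ general (setting $C'=C+a$) where you fix $a=1$. The extra remark about the reference point lying in a completion stratum is a point the paper glosses over, and your continuity/convexity justification handles it adequately.
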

\begin{proof} Let $C$ be the constant from Corollary~\ref{cor:special hat times} so that $\ell_{\gamma_j}(\hat{r}^{\bar k}(t'_k))\leq C$ for $j=k-1,\ldots, k+2$. 
Let $a>0, C'=a+C$ and $c=C'+a+1$.   Then
\[ 0 < a < C' < c \mbox{ and } \ell_{\gamma_j}(\hat r^{\bar k}(t_k')) \leq C = C'-a,\]
for each $j  =k-1,k,k+1,k+2$.
Define
\[ \ep_1=\frac{a}{\sqrt{\frac{2}{\pi}C'+O(C'^4)}} \]
to be as in Lemma~\ref{lem : dl} where the constant of the $O$--notation only depends on $c$. 
Now, if $\ell_{\gamma_j}(Y)\geq C'$ for a point $Y$ in the $\ep_1$--neighborhood of $\hat{r}^{\bar k}(t_k')$ 
and a curve $\gamma_j$ with $j = k-1,k,k+1, k+2$,
 then applying Lemma \ref{lem : dl} we have $d_{\WP}(Y,\hat{r}^{\bar k}(t_k'))\geq \ep_1$ 
 which contradicts the fact that $Y$ is in the $\ep_1$--neighborhood of $\hat{r}^{\bar k}(t_k')$.  Therefore, $\ell_{\gamma_j}(Y) < C'$, for $j = k-1,k,k+1,k+2$, proving the lemma.
\end{proof}
 
Decreasing $\ep_1$ if necessary, we may further assume that for any point $X \in \Teich(S)$ in the $2\ep_1$--neighborhood of $\cS(\alpha)$ and any curve $\gamma$ essentially intersecting $\alpha$, we have $\ell_\gamma(X)$ is uniformly bounded below.  This follows from Lemma~\ref{L:collar} and the fact that the distance to $\cS(\alpha)$ is $(2 \pi \ell_{\alpha})^{\frac12} + O(\ell_{\alpha}^2)$ (see \cite[Corollary 4.10]{wolb}).  In particular, any point in $\oT{S}$ in the $2\ep_1$--neighborhood may only lie on a stratum corresponding to a (possibly empty) multicurve having zero intersection number with $\alpha$. 
    \medskip
 
 Now let $Z\in \Teich(S)$ be a point in the $\ep_1$--neighborhood of $\hat{r}(0)$. 
 Let then $[Z,\hat{r}(t_k')]$ be the geodesic segment connecting $Z$ to $\hat{r}(t_k')$. 
 By Corollary~\ref{cor:special hat times} the curves $\gamma_k,\gamma_{k+1}$ have bounded lengths at $\hat{r}(t_k')$
 and the sequence of curves $\{\gamma^{\bar k}_i\}_i$, $\bar k=0,1$, 
 is a quasi-geodesic in $\cC(S_{\bar k})$ that converges to a point in the Gromov boundary of $\cC(S_{\bar k})$. 
 Moreover, $S\backslash\alpha$ is the union of $S_0$ and $S_1$. 
 Then as in \cite[Lemma 8.1]{wpbehavior} we can show that after possibly passing to a subsequence $[Z,\hat{r}(t_k')]$
  converges uniformly on compact subsets to an infinite ray 
 \[ r :[0,\infty)\to\Teich(S).\] 
Also, note that the construction of $r$ and the $\CAT(0)$ property of the WP metric imply the rays $r$ and $\hat{r}$, $\ep_1$--fellow travel.

The following are straightforward consequences of the results of this section.
\begin{cor} \label{cor : special times r} For all $k \geq 2$ and $j = k-1,k,k+1,k+2$, we have
\[ \ell_{\gamma_j}(r(t_k')) \leq C',\]
where $C' > 0$ is the constant from Lemma~\ref{lem : ep1}.
\end{cor}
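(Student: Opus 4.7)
The plan is to deduce the corollary directly from Lemma~\ref{lem : ep1} together with the $\ep_1$-fellow-traveling property of $r$ and $\hat{r}$ that was established in the paragraph preceding the corollary statement. Recall that $r$ is defined as a (subsequential) limit of the WP geodesic segments $[Z, \hat{r}(t_k')]$, where $Z$ lies in the $\ep_1$-neighborhood of $\hat{r}(0)$; this construction together with the $\CAT(0)$ property of $\oT{S}$ was used in the text to conclude that $d_{\WP}(r(t), \hat{r}(t)) \leq \ep_1$ for all $t \geq 0$.

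With the fellow-traveling bound in hand, the execution is essentially a one-line application of Lemma~\ref{lem : ep1}. I would fix $k \geq 2$ and observe that $r(t_k')$ lies in the $\ep_1$-neighborhood of $\hat{r}(t_k')$ in $\oT{S}$. Lemma~\ref{lem : ep1} then asserts that any such point $Y$ satisfies $\ell_{\gamma_j}(Y) < C'$ for each $j \in \{k-1, k, k+1, k+2\}$. Taking $Y = r(t_k')$ yields $\ell_{\gamma_j}(r(t_k')) \leq C'$ for all four values of $j$, which is exactly the statement to be proved.

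There is no substantial obstacle here — this is a corollary in the most literal sense, merely transporting the length bound from $\hat{r}$ (coming from Corollary~\ref{cor:special hat times}) across the $\ep_1$-gap to $r$. The only minor point worth checking is that the length function $\ell_{\gamma_j}$ is well defined and continuous on a neighborhood of $\hat{r}(t_k')$ in $\oT{S}$. Since $\hat{r}(t_k')$ sits in the stratum $\cS(\alpha)$ where only $\alpha$ is pinched, and since each $\gamma_j$ with $j \in \{k-1, k, k+1, k+2\}$ lies in $S_0$ or $S_1$ and is therefore disjoint from $\alpha$, the function $\ell_{\gamma_j}$ extends continuously across $\cS(\alpha)$. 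Evaluation at the Teichm\"uller-space point $r(t_k')$ therefore produces a finite real number, and the bound from Lemma~\ref{lem : ep1} applies verbatim.
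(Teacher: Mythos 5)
Your proposal is correct and matches the paper's proof: both simply combine the $\ep_1$--fellow-traveling of $r$ and $\hat r$ with Lemma~\ref{lem : ep1} applied to the point $Y = r(t_k')$. The extra remark about continuity of $\ell_{\gamma_j}$ across $\cS(\alpha)$ is harmless but not needed, since $r(t_k')$ already lies in $\Teich(S)$.
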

\begin{proof} As noted above, the two geodesics rays $r$ and $\hat{r}$, $\ep_1$--fellow travel, and hence $d_{\WP}(\hat r(t_k'),r(t_k')) < \ep_1$.  
Thus, by Lemma \ref{lem : ep1}, for $j = k-1,k,k+1,k+2$, we have
\[ \ell_{\gamma_j}(r(t_k')) \leq C',\]
as desired.
\end{proof}

This, in turn, implies the following:
\begin{cor} \label{cor : general times r} For all $k \geq 2$, $t \in [t_k',t_{k+1}']$, and $j = k,k+1,k+2$,
\[ \ell_{\gamma_j}(r(t)) \leq C' \]
where $C' > 0$ is the constant from Lemma~\ref{lem : ep1}.
\end{cor}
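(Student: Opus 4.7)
The plan is to combine Corollary \ref{cor : special times r}, which gives length bounds at the two endpoints $t_k'$ and $t_{k+1}'$, with the convexity of length functions along WP geodesics. More specifically, for each of the three indices $j = k, k+1, k+2$, the index $j$ lies in both of the sets $\{k-1,k,k+1,k+2\}$ and $\{k,k+1,k+2,k+3\}$, so Corollary \ref{cor : special times r} (applied once with the parameter $k$ and once with the parameter $k+1$) furnishes the bounds
\[
\ell_{\gamma_j}(r(t_k'))\leq C' \quad\text{and}\quad \ell_{\gamma_j}(r(t_{k+1}'))\leq C'.
\]

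The ray $r$ is a WP geodesic in $\Teich(S)$, and by \cite[\S 3.3]{wol} the hyperbolic length function $\ell_{\gamma_j}$ is strictly convex along any WP geodesic. Therefore the composition $t\mapsto \ell_{\gamma_j}(r(t))$ is a convex function on the interval $[t_k',t_{k+1}']$, and its maximum on this interval is attained at one of the endpoints. Since both endpoint values are bounded above by $C'$, the bound $\ell_{\gamma_j}(r(t))\leq C'$ holds throughout $[t_k',t_{k+1}']$, as required.

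There is essentially no obstacle: the two ingredients (the endpoint bound from the previous corollary and convexity of length along WP geodesics) fit together in a single line. The only thing to be careful about is to verify that the three values $j = k, k+1, k+2$ really are contained in both admissible ranges of Corollary \ref{cor : special times r}, which is immediate from the descriptions $\{k-1,\dots,k+2\}$ at time $t_k'$ and $\{k,\dots,k+3\}$ at time $t_{k+1}'$.
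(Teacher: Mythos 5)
Your proof is correct and takes essentially the same approach as the paper: endpoint length bounds supplied by Corollary~\ref{cor : special times r} combined with convexity of length functions along WP geodesics. The paper applies convexity on the longer intervals $[t_{j-2}',t_{j+1}']$ and then restricts to $[t_k',t_{k+1}']$, whereas you bound directly at the two endpoints $t_k'$ and $t_{k+1}'$; this is only a cosmetic difference.
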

\begin{proof}  By Corollary~\ref{cor : special times r}, we have $\ell_{\gamma_k}(r(t_{k-2}')) \leq C'$ and $\ell_{\gamma_k}(r(t_{k+1}'))\leq C'$.  
By convexity of length-functions \cite[\S 3.3]{wol}, for all $t \in [t_{k-2}',t_{k+1}']$ we have
\[ \ell_{\gamma_k}(r(t)) \leq C'.\]
Since $[t_k',t_{k+1}'] \subset [t_{k-2}',t_{k+1}'] \cap [t_{k-1}',t_{k+2}'] \cap [t_k',t_{k+3}']$, the result follows.
\end{proof}

 \begin{prop}\label{prop : properties of r}
 The length of $\alpha$ is bounded by $\ell_\alpha(Z)$ along $r$.  Furthermore, the ending lamination of $r$ is the lamination $\lambda_0 \cup \lambda_1$ or $\lambda_0\cup \alpha\cup\lambda_1$.
  \end{prop}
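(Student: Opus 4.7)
I would prove the two claims in turn: the length bound on $\alpha$ follows from convexity, while the identification of the ending lamination requires eliminating pinching curves other than $\alpha$ and then analyzing accumulations of Bers curves in $\PML$.

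For the length bound, since $\hat r$ lies entirely in the stratum $\cS(\alpha)$, the continuous extension of $\ell_\alpha$ to $\oT{S}$ vanishes along $\hat r$. Along each geodesic segment $[Z,\hat r(t_k')] \subset \oT{S}$, $\ell_\alpha$ is convex by \cite[\S 3.3]{wol} with endpoint values $\ell_\alpha(Z)$ and $0$, so it is bounded above by $\ell_\alpha(Z)$ on the segment. Since $r$ is the uniform-on-compacta limit of these segments in $\Teich(S)$ and $\ell_\alpha$ is continuous, the bound passes to $r$: $\ell_\alpha(r(t)) \leq \ell_\alpha(Z)$ for all $t \geq 0$.

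For the pinching-curve analysis, suppose $\delta \neq \alpha$. If $\I(\delta,\alpha) > 0$, the length bound just proved and (\ref{Eq:collar width}) give a uniform lower bound on $w_\alpha(r(t))$, so the Collar Lemma forces $\ell_\delta(r(t)) \geq \I(\delta,\alpha)\,w_\alpha(r(t))$ to stay uniformly positive. If instead $\delta \subset S_{\bar h}$ is disjoint from $\alpha$, then since $\{\gamma_k\}_{\bar k = \bar h}$ is a quasi-geodesic in $\cC(S_h)$ converging to the minimal filling lamination $\lambda_{\bar h}$, $\I(\delta,\gamma_k) \geq 1$ for all large $k$ of parity $\bar h$; Corollary~\ref{cor : general times r} and (\ref{Eq:collar width}) give a uniform lower bound on $w_{\gamma_k}(r(t))$ on $[t_k',t_{k+1}']$, and the Collar Lemma then makes $\ell_\delta(r(t)) \geq \I(\delta,\gamma_k)\,w_{\gamma_k}(r(t))$ uniformly positive there. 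Hence the only candidate pinching curve is $\alpha$. Next, for any infinite sequence of distinct Bers curves $\beta_n$ of $r(s_n)$ with $[\beta_n] \to [\mu]$ in $\PML(S)$, pick $k_n$ with $s_n \in [t_{k_n}',t_{k_n+1}']$; the Collar Lemma applied to $\alpha$, $\gamma_{k_n}$, and $\gamma_{k_n+1}$ bounds each of $\I(\beta_n,\alpha),\,\I(\beta_n,\gamma_{k_n}),\,\I(\beta_n,\gamma_{k_n+1})$ uniformly, and dividing by the projective scaling $c_n \to \infty$ gives $\I(\mu,\alpha) = \I(\mu,\lambda_0) = \I(\mu,\lambda_1) = 0$. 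Unique ergodicity and filling of $\lambda_h$ on the once-punctured torus $S_h$ then forces the support of $\mu$ to lie in $\lambda_0 \cup \lambda_1 \cup \alpha$. Conversely, to realize $\lambda_h$ as an accumulation point, I would fix $k_n \to \infty$ with $\bar k_n = h$, extract a Bers pants decomposition of $r(t_{k_n}')$, and select a short curve $\beta_n$ on the $S_h$-side (which exists because $\gamma_{k_n}$ itself has bounded length there); bounded intersection $\I(\beta_n,\gamma_{k_n})$ in the once-punctured torus $S_h$ translates to bounded $\cC(S_h)$-distance, so $[\beta_n] \to [\lambda_h]$ in $\PML(S_h)$. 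Combining this with the pinching analysis yields the ending lamination as claimed.

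The main obstacle will be the Bers-curve analysis at the end: one must handle the possibility that the Bers pants decomposition of $S$ at $r(t_{k_n}')$ is not of the form $\{\alpha,\beta_0,\beta_1\}$ (if $\ell_\alpha(Z) > L_S$), yet still extract a short curve on each $S_h$, and verify that the projective scaling factors $c_n$ diverge so that the bounded intersection numbers pass to vanishing intersections in the $\PML$-limit. The key ingredients are the uniform width lower bound coming from (\ref{Eq:collar width}) together with Corollary~\ref{cor : general times r}, and unique ergodicity of the irrational minimal laminations $\lambda_h$ on the once-punctured tori $S_h$, which upgrades ``disjoint from $\lambda_h$'' to ``supported on $\lambda_h$''.
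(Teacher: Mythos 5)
Your proof of the first claim (the bound $\ell_\alpha(r(t)) \leq \ell_\alpha(Z)$ via convexity of $\ell_\alpha$ on the segments $[Z,\hat r(t_k')]$ and passing to the limit) is exactly the paper's argument. For the second claim you take a genuinely different, and considerably longer, route. The paper only proves the \emph{inclusion} $\lambda_0 \cup \lambda_1 \subseteq \nu^+(r)$ --- using Corollary~\ref{cor : special times r} to see that the $\gamma_k$ are bounded-length curves at the times $t_k'$ whose projective classes converge to $[\bar\lambda_0]$ and $[\bar\lambda_1]$ --- and then finishes with a purely topological observation: since $\lambda_h$ is minimal and fills $S_h$ for $h=0,1$, the only measurable lamination properly containing $\lambda_0 \cup \lambda_1$ is $\lambda_0 \cup \alpha \cup \lambda_1$, so $\nu^+(r)$ is forced to be one of the two laminations in the statement (the latter exactly when $\alpha$ is a pinching curve). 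That one sentence makes your two hardest steps unnecessary: you do not need to rule out pinching curves other than $\alpha$, nor to run the intersection-number/$\PML$-limit analysis on arbitrary sequences of Bers curves. Your extra work is correct as far as it goes --- the collar-width lower bounds from Corollary~\ref{cor : general times r} do preclude any curve meeting $\gamma_k$, $\gamma_{k+1}$, or $\alpha$ from pinching, and unique ergodicity of irrational laminations on the once-punctured torus does confine the supports of Bers-curve limits --- but it buys you nothing beyond what the containment $\lambda_0\cup\lambda_1\subseteq\nu^+(r)$ already gives for free. The one place where your worry is legitimate (bounded-length curves versus genuine Bers curves of length at most $L_S$) affects the paper's argument equally; both rely on the robustness of the definition of the ending lamination in \cite{bmm1} under changing the length threshold, so it is not a gap specific to your approach.
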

 \begin{proof}
 First note that by convexity of $\ell_\alpha$, \cite[\S3.3]{wol}, and the fact that $\ell_\alpha(\hat{r}(t_k')) = 0$, it follows that $\ell_\alpha$ is bounded by $\ell_\alpha(Z)$ on $[Z,\hat r(t_k')]$.  
 Since $r$ is a limit of a subsequence of the geodesics $[Z,\hat r(t_k')]$, the first claim of the proposition holds.

By Corollary~\ref{cor : special times r} the curves $\gamma_k,\gamma_{k+1}$ have bounded length at $r(t'_k)$ for all $k$, 
hence by the definition of ending lamination, $\lambda_0$ and $\lambda_1$ are contained in the ending lamination of $r$. 
% Moreover, note that $r$ is the limit of geodesic segments with $\alpha$ pinched at their end points. 
% The fact that the length of $\alpha$ is bounded along $r$
% then follows from the convergence of lengths of ending measures of WP geodesics \cite[Lemma 2.10]{bmm1}.
Note that the only measurable lamination properly containing $\lambda_0 \cup \lambda_1$ is $\lambda_0\cup \alpha\cup\lambda_1$, and so $\nu^+(r)$ must be one of these two laminations (and it is the latter one if and only if $\ell_\alpha(r(t)) \to 0$ as $t \to \infty$, i.e.~if $\alpha$ is a pinching curve).
% This also tells us that $\alpha$ is contained in the ending lamination of $r$.
% Finally, by the definition of ending lamination in $\S$\ref{sec:prelim} we can conclude that the ending lamination of $r$ is $\lambda_0\cup\alpha\cup\lambda_1$.
\end{proof}

We now turn to estimates for twists about bounded length curves at $r(t_k')$.

\begin{lem} \label{lem : est tw gamma} For any $\delta \in \cC_0(S)$ with $i(\delta,\alpha) \neq 0$, there exists $c = c(\delta) > 0$ such that for all $t \in [t_k',t_{k+1}']$, we have
\[ \tw_{\gamma_k}(\delta,r(t)) \asya_c e_k, \]
and
\[ \tw_{\gamma_{k+1}}(\delta,r(t_k')) \asya_c 1\]
for all but finitely many $k$ (namely, whenever $i(\gamma_k,\delta) \neq 0$ and $i(\gamma_{k+1},\delta) \neq 0$, respectively).
\end{lem}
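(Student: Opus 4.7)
The plan is to compute each twist via a bounded-length marking in which $\gamma_{k+2}$ (resp.\ $\gamma_{k-1}$) serves as the transversal to $\gamma_k$ (resp.\ $\gamma_{k+1}$), and then exploit the Farey recursion $\gamma_{k+2}=D_{\gamma_k}^{\pm e_k}(\gamma_{k-2})$. First I would set up the marking: for $t\in[t_k',t_{k+1}']$, Corollary~\ref{cor : general times r} together with Proposition~\ref{prop : properties of r} shows that $\alpha$, $\gamma_k$, $\gamma_{k+1}$, and $\gamma_{k+2}$ all have uniformly bounded length at $r(t)$. As $\gamma_k$ and $\gamma_{k+2}$ are Farey-adjacent on $S_{\bar k}$, we obtain a bounded-length marking $\mu$ of $r(t)$ with base $\{\alpha,\gamma_k,\gamma_{k+1}\}$ having $\gamma_{k+2}$ as transversal to $\gamma_k$. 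Applying the coarse equation~(\ref{eq:twist-marking}) yields $\tw_{\gamma_k}(\delta,r(t))\asya d_{\gamma_k}(\delta,\mu)$, and since all curves of $\mu$ are uniformly bounded in length their annular projections to $\gamma_k$ have uniformly bounded diameter, so $d_{\gamma_k}(\delta,\mu)\asya d_{\gamma_k}(\delta,\gamma_{k+2})$. An analogous argument at $t=t_k'$ using Corollary~\ref{cor : special times r} (which additionally gives $\gamma_{k-1}$ bounded-length and Farey-adjacent to $\gamma_{k+1}$ on $S_{\overline{k+1}}$) yields $\tw_{\gamma_{k+1}}(\delta,r(t_k'))\asya d_{\gamma_{k+1}}(\delta,\gamma_{k-1})$.

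Next I would use the Dehn-twist identity. Equation~(\ref{eq : tw gh i-1, i+1}), translated into the unified indexing, is $\gamma_{k+2}=D_{\gamma_k}^{\pm e_k}(\gamma_{k-2})$, and since $D_{\gamma_k}$ acts as a unit translation on $\cC(\gamma_k)\cong\mathbb{Z}$ we get $d_{\gamma_k}(\gamma_{k-2},\gamma_{k+2})=e_k+O(1)$; the triangle inequality then reduces the first estimate to showing $d_{\gamma_k}(\delta,\gamma_{k-2})=O_\delta(1)$, and similarly the second reduces to showing $d_{\gamma_{k+1}}(\delta,\gamma_{k-1})=O_\delta(1)$. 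For these bounds I would fix a bounded-length marking $\mu_Z$ of the starting point $Z$ of $r$ and invoke the Masur--Minsky distance formula (finiteness of large projections): only finitely many subsurfaces $Y$ can have $d_Y$ exceeding any threshold on a fixed pair of markings, so for all but finitely many $k$ both $d_{\gamma_k}(\delta,\mu_Z)$ and $d_{\gamma_k}(\mu_Z,\gamma_0^{\bar k})$ (and their $\gamma_{k+1}$-analogues) are $O_\delta(1)$, which combine by the triangle inequality to $d_{\gamma_k}(\delta,\gamma_0^{\bar k})=O_\delta(1)$. Bounded Geodesic Image applied to the Farey geodesic $\gamma_0^{\bar k},\gamma_1^{\bar k},\ldots,\gamma_{i-2}^{\bar k}$ in $\cC(S_{\bar k})$, which stays at Farey-distance $\geq 2$ from $\gamma_k=\gamma_i^{\bar k}$, yields $d_{\gamma_k}(\gamma_0^{\bar k},\gamma_{i-2}^{\bar k})=O(1)$; the final Farey step to $\gamma_{i-1}^{\bar k}=\gamma_{k-2}$ contributes only $O(1)$ to the annular projection, so $d_{\gamma_k}(\gamma_0^{\bar k},\gamma_{k-2})=O(1)$, and analogously on $S_{\overline{k+1}}$. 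A last triangle inequality closes the argument.

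The main obstacle is the third step: bounding $d_{\gamma_k}(\delta,\gamma_{k-2})$ by a $\delta$-dependent constant uniformly in $k$ requires both pieces of the standard subsurface-projection machinery (Bounded Geodesic Image plus the finiteness of large projections from the distance formula), and a careful accounting for the fact that $\gamma_{k-2}$ itself sits in the $1$-neighborhood of $\gamma_k$ in $\cC(S_{\bar k})$ so that BGI must be applied to the geodesic only up to $\gamma_{i-2}^{\bar k}$ and extended by one Farey step. The first two steps are then essentially formal manipulations of (\ref{eq:twist-marking}) and the Farey recursion.
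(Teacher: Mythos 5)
Your proposal is correct and follows essentially the same route as the paper: convert the twist to $d_{\gamma_k}(\delta,\gamma_{k+2})$ via a bounded-length marking and coarse equation~(\ref{eq:twist-marking}), use $\gamma_{k+2}=D_{\gamma_k}^{\pm e_k}(\gamma_{k-2})$ to get $d_{\gamma_k}(\gamma_{k-2},\gamma_{k+2})\asya e_k$, control the projection of the earlier part of the Farey geodesic by Bounded Geodesic Image, and absorb $d_{\gamma_k}(\delta,\gamma_0^{\bar k})$ into a $\delta$-dependent constant. The only difference is that you spell out, via finiteness of large projections through a fixed marking $\mu_Z$, why that last quantity is bounded uniformly in $k$ -- a point the paper's proof leaves implicit.
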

\begin{proof}
By Corollary~\ref{cor : general times r} we may choose a bounded length marking $\mu$ at $r(t)$ so that $\gamma_k$ is in the base and $\gamma_{k+2}$ projects to the transversal to $\gamma_k$.  
Recall that $\bar k \in \{0,1\}$ is the residue of $k$ modulo $2$.  Avoiding finitely many $k$, $i(\delta,\gamma_k) \neq 0$, and we may apply the triangle inequality.  Doing so we have
\begin{equation} \label{eqn : bounded depending on delta}
\Big|d_{\gamma_k}(\delta,\gamma_{k+2}) - d_{\gamma_k}(\gamma_{\bar k},\gamma_{k+2})\Big| \leq d_{\gamma_k}(\delta,\gamma_{\bar k}).
\end{equation}
Since $\mu|_{S_{\bar k}}$ is a uniformly bounded length marking at $r(t)$, we have uniform errors (independent of $k$) in the following coarse equations. 
First, by (\ref{eq:twist-marking}) we have
\begin{equation}\label{eq:twgk}
 \tw_{\gamma_k}(\delta,r(t)) \asya d_{\gamma_k}(\delta,\mu) = d_{\gamma_k}(\delta,\gamma_{k+2}).
\end{equation}
Since $\gamma_{k+2}=D^{\pm e_k}_{\gamma_k}(\gamma_{k-2})$, it follows from \cite[Equation~(2.6)]{mm2}) that 
\[d_{\gamma_k}(\gamma_{k-2},\gamma_{k+2})\asya e_k.\]
  Furthermore, because $\{\gamma_{\bar k + 2i}\}_i$ are the vertices of a geodesic in $\cC(S_{\bar k})$, by \cite[Theorem~3.1]{mm2}, we have
\begin{equation} \label{eq:dgk}
d_{\gamma_k}(\gamma_{\bar k},\gamma_{k+2}) \asya d_{\gamma_k}(\gamma_{k-2},\gamma_{k+2}) = e_k.
\end{equation}
Moreover, since we are allowing our error $c = c(\delta)$ to depend on $\delta$, we can combine the coarse equations (\ref{eq:twgk})  and (\ref{eq:dgk})
with inequality (\ref{eqn : bounded depending on delta}) and deduce
\[ \tw_{\gamma_k}(\delta,r(t)) \asya e_k.\]
This proves the first coarse equation of the lemma.

To prove the second coarse equation, we note that by Corollary~\ref{cor : special times r}, we may choose our bounded length marking $\mu$ at $r(t_k')$ so that $\gamma_{k+1}$ is a base curve and $\gamma_{k-1}$ projects to a transversal for $\gamma_{k+1}$.
Thus, similar to Equation~(\ref{eq:twgk}), we see that (\ref{eq:twist-marking}) implies $\tw_{\gamma_{k+1}}(\delta,r(t_k')) \asya d_{\gamma_{k+1}}(\delta,\gamma_{k-1})$.  Furthermore, similar to (\ref{eqn : bounded depending on delta}), for $k$ sufficiently large we have
\[ \Big|d_{\gamma_{k+1}}(\delta,\gamma_{k-1}) - d_{\gamma_{k+1}}(\gamma_{\overline{k+1}},\gamma_{k-1})\Big| \leq d_{\gamma_{k+1}}(\delta,\gamma_{\overline{k+1}}),\]
Since $\gamma_{\overline{k+1}}$ and $\gamma_{k-1}$ preceed $\gamma_{k+1}$ in the $\cC(S_{\overline{k+1}})$--geodesic, appealing to \cite[Theorem~3.1]{mm2} again we have
\[ d_{\gamma_{k+1}}(\gamma_{\overline{k+1}},\gamma_{k-1}) \asya 1. \]
Combining these facts just as in the previous paragraph and increasing $c = c(\delta)$ if necessary, we have
\[ \tw_{\gamma_{k+1}}(\delta,r(t_k')) \asya_c 1,\]
which completes the proof of the lemma.
\end{proof}

\subsection{Estimates for the separating curve.}

We will eventually impose additional growth conditions on our sequence $\{e_k\}$ to control the length and twisting about the separating curve $\alpha$.  The next two lemmas are used to determine those conditions.

\begin{lem}\label{lem : lb for alpha}
 There exists a function $f_1:[0,\infty)\to \mathbb R^+$, so that for any geodesic ray $r$ constructed as above (from sequences $\{e_i^h\}_i$, 
 $h =0,1$, beginning at $Z$) we have $\ell_\alpha(r(T))\geq f_1(T)$ for all $T\in[0,\infty)$.  Moreover, there exists such a function $f_1$ which is continuous.
\end{lem}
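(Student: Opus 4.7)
The plan is to establish pointwise positivity of $F(T):=\inf_{r}\ell_\alpha(r(T))$ (infimum over admissible rays $r$ starting at $Z$) for each $T>0$ by contradiction using the Geodesic Limit Theorem, and then to upgrade to a continuous positive minorant $f_1$. Suppose to the contrary that $\inf_r\ell_\alpha(r(T_0))=0$ for some $T_0>0$, witnessed by a sequence of admissible rays $r_n$ (from different sequences $\{e_i^h\}_n$) with $\ell_\alpha(r_n(T_0))\to 0$. I would apply Theorem~\ref{thm : geodesic limit} to the segments $r_n|_{[0,T_0]}$ (each starting at $Z$) and, after passing to a subsequence, extract a piecewise geodesic $\hat\zeta\from[0,T_0]\to\oT{S}$ with $\hat\zeta(0)=Z$, a partition $0=s_0<\cdots<s_{k+1}=T_0$, multicurves $\sigma_l$ satisfying $\sigma_l\cap\sigma_{l+1}=\emptyset$, and mapping classes $\varphi_{l,n}=\mathcal{T}_{l,n}\circ\cdots\circ\mathcal{T}_{1,n}$ with $\mathcal{T}_{l,n}\in\tw(\sigma_l)$ and $\varphi_{l,n}(r_n(t))\to\hat\zeta(t)$ on $[s_l,s_{l+1}]$.

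The key intermediate step is the inductive claim that each $\sigma_l$ is disjoint from $\alpha$, so each $\varphi_{l,n}$ fixes $\alpha$ (and hence fixes $\cS(\alpha)$ setwise). This rests on two observations: first, by construction each $r_n$ $\ep_1$-fellow travels $\hat r_n\subset\cS(\alpha)$, so $r_n$ remains in the $\ep_1$-neighborhood of $\cS(\alpha)$; and second, by the remark following the definition of $\ep_1$ in the text, any point of $\oT{S}$ in the $2\ep_1$-neighborhood of $\cS(\alpha)$ lies on a stratum whose defining multicurve is disjoint from $\alpha$. Inductively, once $\varphi_{l-1,n}$ fixes $\cS(\alpha)$, the translate $\varphi_{l-1,n}(r_n)$ also stays in the $\ep_1$-neighborhood of $\cS(\alpha)$, so the limit $\hat\zeta(s_l)$ lies in its closure and $\sigma_l$ must be disjoint from $\alpha$; then $\mathcal{T}_{l,n}\in\tw(\sigma_l)$ is a product of Dehn twists about curves disjoint from $\alpha$, hence fixes $\alpha$, completing the step. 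Passing to the limit,
\[
\ell_\alpha(\hat\zeta(T_0))=\lim_{n\to\infty}\ell_\alpha\bigl(\varphi_{k,n}(r_n(T_0))\bigr)=\lim_{n\to\infty}\ell_\alpha(r_n(T_0))=0,
\]
which forces $\alpha\in\sigma_{k+1}$, so $\hat\zeta$ is a length-$T_0$ piecewise geodesic from $Z$ to a stratum containing $\alpha$ as a pinched curve.

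The main obstacle will be deriving the final contradiction from the existence of such $\hat\zeta$. For $T_0<d_{\WP}(Z,\cS(\alpha))$ it is immediate from the triangle inequality, since $\hat\zeta$ would be a length-$T_0$ path from $Z$ to $\overline{\cS(\alpha)}$. For larger $T_0$, I expect to exploit additional structure coming from the construction of admissible rays: the initial tangent vector $\dot r_n(0)$ is a limit of directions from $Z$ to points $\hat r_n(t_k')$ escaping to infinity along $\hat r_n\subset\cS(\alpha)$, so $\dot{\hat\zeta}(0)$ points ``along'' $\cS(\alpha)$ rather than ``into'' it, and this together with the constraint that all $\sigma_l$ are disjoint from $\alpha$ should prevent $\hat\zeta$ from arriving at $\overline{\cS(\alpha)}$ in length exactly $T_0$.

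Finally, once pointwise positivity $F(T)>0$ is known, lower semicontinuity of $F$ in $T$ (verified by the same kind of Geodesic Limit Theorem argument applied to sequences $r_n(T_n)$ with $T_n\to T$) yields, on any compact interval, a positive lower bound for $F$. A continuous positive $f_1\leq F$ can then be obtained by a routine construction, for instance by smoothing the monotone nonincreasing rearrangement $T\mapsto\min\{F(T'):T'\in[0,T+1]\}$.
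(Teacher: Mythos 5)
Your setup is the same as the paper's: argue by contradiction with a sequence of admissible rays $r_n$ from $Z$ with $\ell_\alpha(r_n(T_0))\to 0$, apply Theorem~\ref{thm : geodesic limit}, and use the $\ep_1$--neighborhood observation to conclude that every $\sigma_l$ has zero intersection number with $\alpha$, so each $\varphi_{l,n}$ fixes $\alpha$ and preserves $\ell_\alpha$. That much is fine (and your inductive bookkeeping for why the translates $\varphi_{l,n}(r_n)$ stay near $\cS(\alpha)$ is, if anything, more careful than the paper's). But there is a genuine gap at the step you yourself flag as ``the main obstacle.'' Because you apply the Geodesic Limit Theorem only on $[0,T_0]$, all you obtain is a length-$T_0$ piecewise geodesic from $Z$ ending on a stratum containing $\alpha$ --- and this is not a contradiction. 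Nothing in the construction forbids it: the rays $r$ may well have $\ell_\alpha(r(t))\to 0$ as $t\to\infty$ (Proposition~\ref{prop : properties of r} explicitly allows $\alpha$ to be a pinching curve), so $f_1$ is only claimed to be positive at each fixed $T$, not bounded below along a ray. Your proposed rescue --- that the initial tangent vector points ``along'' rather than ``into'' $\cS(\alpha)$ --- is too vague to carry the argument and is not how the contradiction is actually obtained.

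The paper's proof gets the contradiction from two ideas you are missing. First, since $\ell_\alpha(r_n(t))\le\ell_\alpha(Z)$ for all $t$ (Proposition~\ref{prop : properties of r}) and $\ell_\alpha$ is convex along WP geodesics, the convergence $\ell_\alpha(r_n(T))\to 0$ propagates forward: $\ell_\alpha(r_n(t))\to 0$ for \emph{every} $t\ge T$. Second, one applies Theorem~\ref{thm : geodesic limit} on a strictly longer interval $[0,T']$ with $T'>T$ and invokes the non-refraction property built into that theorem: $\hat\zeta(t)\in\Teich(S)$ for all $t\in[T,T']$ away from the finitely many partition points, so $\ell_\alpha(\hat\zeta(t))>0$ at such a $t$. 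Since $\varphi_{l,n}$ fixes $\alpha$, part (2) of the theorem gives
\[
\lim_{n\to\infty}\ell_\alpha(r_n(t))=\lim_{n\to\infty}\ell_\alpha\bigl(\varphi_{l,n}(r_n(t))\bigr)=\ell_\alpha(\hat\zeta(t))>0,
\]
contradicting the propagated convergence to $0$. In short, the contradiction lives in the \emph{interior} of the partition intervals beyond time $T$, not at the endpoint $T_0$; without the convexity step and the extension to $[0,T']$ your argument does not close. (Your final continuity upgrade is fine, though the paper's is simpler: $\ell_\alpha(r(\cdot))$ is nonincreasing, so a continuous minorant is easy to produce.)
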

\begin{proof}
The proof is by contradiction.  If there is no such function $f_1$ (not necessarily continuous), then there would be a sequence of geodesics $\{r_n\}$ starting at $Z$, coming from sequences $\{e_i^h(n)\}_i$, as above, and some $T >0$ so that $\ell_\alpha(r_n(T)) \to 0$ as $n \to \infty$.  Now the idea of the proof is as follows.  Appealing to convexity of $\ell_\alpha$ on $r_n$, we can deduce that $\ell_\alpha(r_n(t)) \to 0$ as $n \to \infty$ for all $t \geq T$.  In particular, choosing any $T' > T$, we can apply Theorem~\ref{thm : geodesic limit} to  $r_n|_{[0,T']}$.  We will see that the curve $\alpha$ is (eventually) present in all the multicurves from the theorem, producing a contradiction to the non-refraction behavior ensured by the Theorem~\ref{thm : geodesic limit}.  We now proceed to the details.

%and on the segment immediappealing to the key feature of Theorem~\ref{thm : geodesic limit} as in 

%The proof of the theorem is by contradiction. Suppose that such a function does not exit.
%Then there is a sequence of geodesic rays $r_n$ starting at $Z$, coming from sequences $\{e_i^h(n)\}_i$, as above, and a $T>0$, 
%so that $\lim_{n\to\infty}\ell_{\alpha}(r_n(T))=0$. 
%In the following first using the convexity of the $\alpha$ length-function along WP geodesics we see that for $t$ sufficiently large the length of $\alpha$ at $r_n(t)$ goes to $0$.
%On the other hand, extracting the limit of a long enough subsegment of $r_n$ as in Theorem \ref{thm : geodesic limit} and appealing to the fact that
%the starting points of the rays $r_n$ is a fixed point in $\Teich(S)$ we can obtain a lower bound for the length of $\alpha$ at $r_n(t)$ independent of $n$ at a generic large $t$
%which is the contradiction proving the existence of $f_1$. 
%For contradiction, we suppose there is a sequence of geodesic rays $r_n$ starting at $Z$, coming from sequences $\{e_i^h(n)\}_i$, as above, and a $T>0$, so that $\lim_{n\to\infty}\ell_{\alpha}(r_n(T))=0$. 

Recall that we have chosen $Z \in \Teich(S)$ and $\ep_1>0$ from Lemma~\ref{lem : ep1} (and the paragraph following its proof) so that the distance to any stratum $\cS(\sigma)$ is at least $\ep_1$ whenever $\sigma$ has nonzero intersection number with $\alpha$.
By Proposition~\ref{prop : properties of r}, $\ell_\alpha(r_n(t)) \leq \ell_\alpha(Z)$ for all $n$ and $t \geq 0$, so since $\lim_{n\to\infty}\ell_{\alpha}(r_n(T))=0$,  convexity of $\ell_\alpha$ implies $\ell_{\alpha}(r_n(t))\leq \ell_{\alpha}(r_n(T))$ for all $n$ sufficiently large and all $t \geq T$.
In particular, $\lim_{n\to\infty}\ell_{\alpha}(r_n(t))=0$ for all $t \geq T$, while $\ell_\alpha(r_n(0)) = \ell_\alpha(Z) > 0$.
 
Now fix any $T'>T$ and apply Theorem~\ref{thm : geodesic limit} to the sequence of geodesic segments $r_n|_{[0,T']}$.  Let the partition $0=t_0<t_1<\ldots<t_{k+1}=T'$, the piecewise geodesic path $\hat\zeta \colon [0,T'] \to \oT{S}$, the multicurves $\{\sigma_l\}_{l=1}^{k+1}$, the multitwists $\{\cT_{l,n}\}_{l=1}^k$, and the mapping classes $\{\varphi_{l,n}\}_{l=1}^k$ obtained by composing the multitwists be from the theorem.
% Since all the geodesic segments $r_n|_{[0,T]}$ start at the point $Z\in \Teich(S)$, $\psi_n\equiv I$, 
% moreover $\hat\tau=\emptyset$ and $\sigma_0=\emptyset$.  

For each $1\leq l\leq k$ and $n\geq 1$, $\cT_{l,n}$ is the composition of powers of Dehn twists about curves in $\sigma_l$, 
but since $r_n$ has distance at least $\ep_1$ from all completion strata except strata of multicurves having zero intersection number with 
$\alpha$, $\sigma_l$ consists of possibly the curve $\alpha$ and a number of curves disjoint from $\alpha$. 
Therefore, $\varphi_{l,n}(\alpha)=\alpha$, and $\ell_{\alpha}(\varphi_{l,n}(r_n(t)))=\ell_{\alpha}(r_n(t))$ for all $t\in [t_l,t_{l+1}]$ and all $l$.
According to Theorem \ref{thm : geodesic limit}, we have $\hat\zeta(t)\in\Teich(S)$ for all $t\in [T,T']$ except possibly the points $\{t_l\}_{l=0}^{k+1}\cap [T,T']$.
Therefore, $\ell_\alpha(\hat{\zeta}(t))>0$ for all these values of $t$.  Applying part (2) of the theorem to any such value of $t$, we have
\[\lim_{n\to\infty}\ell_{\alpha}(r_n(t))=\lim_{n\to\infty}\ell_{\alpha}(\varphi_{l,n}(r_n(t)))=\ell_\alpha(\hat{\zeta}(t))>0.\] 
This contradicts the fact that $\lim_{n\to\infty}\ell_\alpha(r_n(t))=0$ for $t \in [T,T']$.  Therefore, $\ell_{\alpha}(r(T))$ is bounded below by a positive number, depending on $T$, but independent of the ray $r$.  Thus, we have a function $f_1$, not necessarily continuous, so that $\ell_\alpha(r(T)) \geq f_1(T)$ for all $T \geq 0$.  Since $\ell_\alpha(r(t))$ is decreasing it is easy to construct a continuous function $f_1$ which also has this property.
  \end{proof}

 \begin{lem}\label{lem : tw alpha}
 There exists a function $f_2:[0,\infty)\to \mathbb R^+$ such that for any geodesic ray $r$ as above $d_{\alpha}(r(0),r(t))\leq f_2(t)$ for all $t\in[0,\infty)$.  Moreover, there exists such a function $f_2$ which is continuous.
 \end{lem}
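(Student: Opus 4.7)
The plan is to use Theorem~\ref{thm : tw-sh} together with the positive lower bound on $\ell_\alpha$ from Lemma~\ref{lem : lb for alpha}. Fix $t>0$. Since $f_1$ is continuous and positive on $[0,t]$, set $M = M(t) := \min_{s\in[0,t]} f_1(s) > 0$. Combined with Proposition~\ref{prop : properties of r} this gives
\[ M \leq \ell_\alpha(r(s)) \leq \ell_\alpha(Z) \quad \text{for all } s \in [0,t] \]
and for every ray $r$ of the type constructed above.

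Next, I would apply Theorem~\ref{thm : tw-sh} with $\ep_0 = \ell_\alpha(Z)$ (attained at $s=0$, so $\sup\ell_\alpha\geq\ep_0$), $\ep = M/2$, and $T = t$ to obtain an integer $N(t)$. Its contrapositive, applied to $r|_{[0,t]}$, yields
\[ d_\alpha\bigl(\mu(r(0)),\,\mu(r(t))\bigr) \leq N(t), \]
since $\inf \ell_\alpha \geq M > M/2 = \ep$. To pass from the Bers markings to the Teichm\"uller points themselves, I would invoke Coarse Equation~(\ref{eq:twist-marking}) at $s=0$ and $s=t$: the length bounds $M \leq \ell_\alpha \leq \ell_\alpha(Z)$ and Lemma~\ref{L:collar} yield uniform control on the lengths of whichever Bers-marking curves cross $\alpha$, so that $d_\alpha(\mu(r(s)),r(s)) \leq C(t)$ for $s \in \{0,t\}$. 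The triangle inequality for $d_\alpha$ (valid whenever projections are nonempty) then gives $d_\alpha(r(0), r(t)) \leq N(t) + 2C(t)$.

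To turn this pointwise bound into a single continuous function, I would set $F(t) := \sup_r\sup_{s\in[0,t]} d_\alpha(r(0), r(s))$, which is nondecreasing and finite for every $t$ by the preceding paragraph. Any continuous nondecreasing majorant of $F$ (for instance, the piecewise linear interpolation through the points $(n, F(n)+1)$ at integers) serves as the desired $f_2$. The delicate step that I expect to be the main obstacle is the passage from $d_\alpha(\mu(r(0)),\mu(r(t)))$ to $d_\alpha(r(0),r(t))$: one must verify that the additive constant $C(t)$ from~(\ref{eq:twist-marking}) can be chosen uniformly over the whole family of rays, which requires controlling the lengths of the curves in the Bers markings that cross $\alpha$. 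This should follow from the $M(t)$ lower bound on $\ell_\alpha$, together with the option of arranging $\alpha$ itself to lie in the Bers pants decomposition (possible once $\ell_\alpha \leq \ell_\alpha(Z) \leq L_S$ is small enough) and standard collar estimates.
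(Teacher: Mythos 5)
Your proof is correct and is essentially the contrapositive reformulation of the paper's own argument: both rest on Theorem~\ref{thm : tw-sh} combined with the lower bound $\ell_\alpha(r(s))\geq f_1(s)$ from Lemma~\ref{lem : lb for alpha}, the paper phrasing this as a contradiction with a sequence of rays $r_n$ and a single time $T$, while you extract the bound $N(t)$ directly (and you are, if anything, more careful about the passage from Bers markings to the points $r(0)$, $r(t)$, which the paper elides). The only slip is inessential: the piecewise-linear interpolation through the points $(n,F(n)+1)$ need not majorize the nondecreasing function $F$ on $[n,n+1]$ if $F$ jumps; interpolating through $(n,F(n+1))$ fixes this.
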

 \begin{proof}
Suppose that such a function does not exist (not necessarily continuous). Then there is a sequence of geodesic rays $r_n$ constructed as above and a $T>0$, 
so that $\lim_{n\to\infty}d_{\alpha}(r_n(0),r_n(T))=\infty$. 
Then, since $r_n(0)=Z$ for all $n\geq 1$ we have that $\sup_t\ell_\alpha(r_n(t))\geq\inj(Z)>0$.
  Then, by Theorem \ref{thm : tw-sh} we have that $\inf_{t\in[0,T]}\ell_{\alpha} (r_n(t))\to 0$ as $n\to\infty$. 
  But this contradicts the fact that $\inf_{t\in[0,T]}\ell_{\alpha}(r_n(t))\geq \inf_{t\in[0,T]} f_1(t)>0$ for all $n\geq 1$ by Lemma \ref{lem : tw alpha}. 
  Existence of $f_2$ now follows from this contradiction.  Restricting the argument to a subinterval $[0,T'] \subset [0,T]$ we see that we can replace $f_2$ by an increasing function, and then by a continuous function, retaining the required property.
 \end{proof}

With these two lemmas in place, we now impose our final growth conditions on $\{e_k\}_k$.
Let $f_1,f_2$ be the functions from Lemmas \ref{lem : lb for alpha} and \ref{lem : tw alpha}, and for $k\in\mathbb{N}$ let 
 \begin{eqnarray}
 F_{1,k}&=&\min\{f_1(s)\mid s\in[t_k,t_{k+2}] \}\\
  F_{2,k}&=&\max\{f_2(s) \mid s\in[t_k,t_{k+2}] \}.
  \end{eqnarray}
 
As our last growth requirement for $\{e_k\}_k$, we assume $e_k$ grows fast enough that
\begin{equation}\label{eq : gr2}\lim_{k\to\infty}\frac{F_{2,k}-2\log F_{1,k}}{e_k}= 0.\end{equation}

%%%%%%%%%%%%%
\subsection{Limit sets}
%%%%%%%%%%%%%

For the remainder of the paper, we let $\{e_i^h\}_{i=0}^\infty$ be a sequence such that $e_i^h \geq K_i$, for $h = 0,1$ and all $i\in\mathbb{N}$, where $K_i$ is from Lemma~\ref{lem : close to concatenation}.  Let $\{e_k\}_k$, $\{\gamma_k\}_k$, $\{t_k\}_k$, $\{t'_k\}_k$, $\{X_k\}_k$ be as in Notation~\ref{not : seq k}, and assume that $\{e_k\}_k$ satisfies (\ref{eq : gr1}) and (\ref{eq : gr2}).  The following immediately implies Theorem~\ref{thm:main}.

%In the following we continue to let $r$ be a geodesic ray starting at $Z$ constructed from sequences $\{e_i^h\}_i$ and let $\lambda_h$ be the limiting laminations, for $h=0,1$ as before.
%We will impose further constraints on the sequences $\{e_i^h\}$ as we proceed and eventually determine the limit set of $r$ in the Thurston compactification of \T\ space. Specifically, for measures $\bar \lambda_0$ and $\bar \lambda_1$ on the laminations, we will prove the following.

\begin{thm}\label{thm : limit of r}
The limit set of $r$ in the Thurston compactification of $\Teich(S)$ is the $1$--simplex $[[\bar\lambda_0],[\bar\lambda_1]]$ of projective classes of measures supported on $\lambda_0 \cup \lambda_1$.
\end{thm}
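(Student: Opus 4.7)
The plan is to combine Theorem~\ref{thm:combinatorial length} with the twist and bounded-length estimates from the preceding subsections to compute the asymptotics of $\ell_\delta(r(s))$ for test curves $\delta$, and then to identify the accumulation points of $r$ in $\PML(S)$. The first step is to show that $r(t_{2i}')\to[\bar\lambda_0]$ and $r(t_{2i+1}')\to[\bar\lambda_1]$ as $i\to\infty$. At $s=t_k'$, apply Theorem~\ref{thm:combinatorial length} with the pants decomposition $P_k=\{\alpha,\gamma_k,\gamma_{k+1}\}$. By Corollary~\ref{cor : special times r}, Proposition~\ref{prop : properties of r}, and the remark following Lemma~\ref{lem : ep1}, every curve in $P_k$ has length bounded above at $r(t_k')$ independently of $k$; the lower bound for $\alpha$ follows from the $\ep_1$--neighborhood argument, while convexity of $\ell_{\gamma_j}$ along the WP geodesic $r$ combined with Proposition~\ref{prop : monotonicity and separation} (which shows $t_k'$ is at least $\tfrac{D}{8}$ away from the pinching times $t_k$ and $t_{k+1}$) gives the lower bound for $\gamma_k$ and $\gamma_{k+1}$. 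All three widths are therefore bounded. Lemma~\ref{lem : est tw gamma} then gives $\tw_{\gamma_k}(\delta,r(t_k'))\asya e_k$ and $\tw_{\gamma_{k+1}}(\delta,r(t_k'))\asya 1$, whence $\ell_\delta(\gamma_k,r(t_k'))\asym \I(\delta,\gamma_k)\,e_k$ and $\ell_\delta(\gamma_{k+1},r(t_k'))=O(\I(\delta,\gamma_{k+1}))$. The $\alpha$--contribution is $\lesssim \I(\delta,\alpha)(F_{2,k}-2\log F_{1,k})$, and combining Lemma~\ref{lem : intersection} with growth conditions~(\ref{eq : gr1}) and~(\ref{eq : gr2}) makes both it and the $\gamma_{k+1}$ term negligible relative to $\I(\delta,\gamma_k)e_k$. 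Scaling by $u_k$ of order $1/(q_k e_k)$, where $q_k$ is the denominator of the $k$th continued-fraction convergent of $x_{\bar k}$, and using $\I(\delta,\gamma_k)/q_k\to \I(\delta,\bar\lambda_{\bar k})$ (from Lemma~\ref{lem : intersection} when $\I(\delta,\alpha)\neq 0$, and from direct Farey convergence otherwise), I obtain $u_k\ell_\delta(r(t_k'))\to c\cdot\I(\delta,\bar\lambda_{\bar k})$ for some $c>0$ and every $\delta$, hence $r(t_k')\to[\bar\lambda_{\bar k}]$.

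Next, for an arbitrary accumulation point $[\bar\mu]$, take $s_n\to\infty$ with $r(s_n)\to[\bar\mu]$ and pass to a subsequence with $s_n\in[t_{k_n}',t_{k_n+1}']$ and $k_n$ of fixed parity. The same length estimate is to yield
\[
\ell_\delta(r(s_n))=A_n\,\I(\delta,\gamma_{k_n})+B_n\,\I(\delta,\gamma_{k_n+1})+(\text{negligible}),
\]
for positive quantities $A_n,B_n$ built from widths, lengths, and twists of $\gamma_{k_n}$ and $\gamma_{k_n+1}$ at $r(s_n)$, with the $\alpha$--part and Theorem~\ref{thm:combinatorial length} error both negligible by~(\ref{eq : gr1}) and~(\ref{eq : gr2}). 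Passing to a further subsequence so that $A_nq_{k_n}/(A_nq_{k_n}+B_nq_{k_n+1})\to a$ and $B_nq_{k_n+1}/(A_nq_{k_n}+B_nq_{k_n+1})\to b$, and setting $u_n=(A_nq_{k_n}+B_nq_{k_n+1})^{-1}$, one gets $u_n\ell_\delta(r(s_n))\to\I(\delta,a\bar\lambda_0+b\bar\lambda_1)$ on the finite determining set of curves from the remark following~(\ref{EQ:Thurston limit definition}), forcing $[\bar\mu]=[a\bar\lambda_0+b\bar\lambda_1]\in[[\bar\lambda_0],[\bar\lambda_1]]$.

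The limit set $\Lambda$ of $r$ in the Thurston compactification is closed and connected as a nested intersection of closures of tails of $r$ in a compact Hausdorff space; by the previous two steps $\Lambda$ contains both $[\bar\lambda_0]$ and $[\bar\lambda_1]$ and is contained in the arc $[[\bar\lambda_0],[\bar\lambda_1]]$, and any connected subset of a closed arc containing both endpoints is the whole arc, so $\Lambda=[[\bar\lambda_0],[\bar\lambda_1]]$. The main obstacle is controlling the $\gamma_{k_n+1}$ contribution for general $s_n\in[t_{k_n}',t_{k_n+1}']$: Lemma~\ref{lem : est tw gamma} pins $\tw_{\gamma_{k+1}}(\delta,r(t))$ down cleanly only at $t=t_k'$ (where it is $O(1)$) and at $t=t_{k+1}'$ (via the $k+1$ case, where it is $\asym e_{k+1}$), so in between the twist, and therefore $B_n$, can take a range of values; handling this will likely require either a triangle-inequality bound on $d_{\gamma_{k+1}}(r(t_k'),r(s_n))$ in the annular complex combined with length control on $\gamma_{k+1}$ at nearby controlled times, or a convexity argument using convexity of length functions along WP geodesics to sandwich $\ell_\delta(r(s_n))$ between its values at $t_{k_n}'$ and $t_{k_n+1}'$.
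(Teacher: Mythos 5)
Your overall architecture matches the paper's: expand $\ell_\delta(r(s))$ via Theorem~\ref{thm:combinatorial length} over the pants decomposition $\{\alpha,\gamma_k,\gamma_{k+1}\}$, kill the $\alpha$--term and the error terms using (\ref{eq : gr1}) and (\ref{eq : gr2}), obtain the endpoints $[\bar\lambda_0],[\bar\lambda_1]$ from the times $t_k'$, and finish by connectivity. But the difficulty you flag at the end is not a technicality to be patched later---it is the crux of the theorem---and neither of your two proposed remedies can work. First, $d_{\gamma_{k+1}}(r(t_k'),r(s))$ is genuinely unbounded on $[t_k',t_{k+1}']$: the relative twisting about $\gamma_{k+1}$ sweeps from $O(1)$ at $t_k'$ up to roughly $e_{k+1}$ at $t_{k+1}'$ (Lemma~\ref{lem : est tw gamma} applied to the index $k+1$), and $e_{k+1}$ is enormous by construction, so no triangle-inequality bound in the annular complex is available. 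Second, convexity sandwiching $\ell_\delta(r(s_n))$ between its values at $t_{k_n}'$ and $t_{k_n+1}'$ cannot identify the limiting projective class: the theorem asserts that the limits sweep out the entire arc, so any argument that pins $[\bar\mu]$ down from data at the two endpoint times alone is doomed. There is also a hidden instance of the same problem earlier in your display: as written, $B_n$ contains $\tw_{\gamma_{k_n+1}}(\delta,r(s_n))$ and therefore depends on $\delta$, so ``passing to a subsequence where $B_nq_{k_n+1}/(A_nq_{k_n}+B_nq_{k_n+1})\to b$'' does not produce a single $b$ valid for all test curves simultaneously.

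The paper's resolution is different and is the missing idea: one never determines the $\gamma_{k+1}$--coefficient. Setting $y(s_n)=w_{\gamma_{k_n+1}}(r(s_n))+\ell_{\gamma_{k_n+1}}(r(s_n))\tw_{\gamma_{k_n+1}}(\gamma_{\overline{k_n+1}},r(s_n))$---a quantity independent of $\delta$---the triangle inequality for annular projections gives $\ell_{\gamma_{k_n+1}}(\delta,s_n)/\I(\delta,\gamma_{k_n+1})\asya y(s_n)$ with additive error depending only on $\delta$ (Lemma~\ref{lem : estimating coeffs}). A dichotomy then closes the argument: if $y(s_n)$ stays bounded along a subsequence, the entire $\gamma_{k_n+1}$--contribution is $o(e_{k_n})$ and hence negligible next to the $\gamma_{k_n}$--term; if $y(s_n)\to\infty$, the additive error is washed out and $y(s_n)\I(\delta,\gamma_{k_n+1})/\ell_{\gamma_{k_n+1}}(\delta,s_n)\to 1$. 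Either way $u_n\ell_\delta(r(s_n))$ is asymptotic to $\I\big(\delta,u_n(x(s_n)\gamma_{k_n}+y(s_n)\gamma_{k_n+1})\big)$ with coefficients independent of $\delta$, and since $[\gamma_{k_n}]$ and $[\gamma_{k_n+1}]$ converge projectively to $[\bar\lambda_{\overline{k_n}}]$ and $[\bar\lambda_{\overline{k_n+1}}]$, every subsequential limit is a nonnegative combination of $\bar\lambda_0$ and $\bar\lambda_1$. Your endpoint step and the connectivity step are essentially fine, though note that the lower bound on $\ell_{\gamma_k}(r(t_k'))$ should come from the Collar Lemma applied to the bounded-length curve $\gamma_{k+2}$ (which meets $\gamma_k$ once), not from convexity, which only yields upper bounds away from the minimum.
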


For curves $\delta,\gamma\in \cC_0(S)$ and any time $s\in[0,\infty)$, as in (\ref{Eq:contribution delta gamma X}), let
\begin{equation}\label{eq : ld(g,t)}
\ell_\delta(\gamma,s)= \ell_\delta(\gamma,r(s)) = \I(\delta,\gamma)\Big(w_{\gamma}(r(s))+\ell_\gamma(r(s))\tw_\gamma(\delta,r(s))\Big).
\end{equation}

Now suppose that $\{s_k\}_k$ is a sequence such that $s_k \in [t_k',t_{k+1}']$.  Pass to a subsequence $\{s_k\}_{k \in \mathcal K}$ so that $r(s_k) \to [\bar \nu]$ in the Thurston compactification (to avoid cluttering the notation with additional subscripts, we have chosen to index a subsequence using a subset $\mathcal K \subset \mathbb N$).  Let $\{u_k\}_{k\in\mathcal{K}}$ be a scaling sequence, so that 
\[ \lim_{k \to \infty} u_k \ell_{\delta}(r(s_k)) = i(\delta,\bar \nu),\]
for all curves $\delta$.

By Corollary~\ref{cor : general times r} and Proposition~\ref{prop : properties of r}, the curves $\gamma_k,\gamma_{k+1},\alpha$ form a uniformly bounded length pants decomposition on $r(s_k)$.  Consequently, by Theorem \ref{thm:combinatorial length} we
 obtain the following expansion for the length of the curve $\delta\in\mathcal C_0(S)$ at $r(s_k)$,
\begin{eqnarray}\label{eq : length of d}
\ell_{\delta}(r(s_k))&=& \ell_{\delta}(\gamma_k,s_k)+\ell_{\delta}(\gamma_{k+1},s_k) +\ell_{\delta}(\alpha,s_k)\nonumber\\
 & & +O \left( \I(\delta,\gamma_k) \right) + O \left( \I(\delta,\gamma_{k+1}) \right) + O \left( \I(\delta,\alpha) \right)
\end{eqnarray}
where the constant of the $O$ notation depends only on the uniform upper bounds for the lengths of $\gamma_k$, $\gamma_{k+1}$, and $\alpha$.

The next proposition shows that only two of the terms in (\ref{eq : length of d}) are actually relevant.
\begin{prop} \label{prop : relevant terms} With notation as above, and $\delta \in \cC_0(S)$ with $i(\delta,\alpha) \neq 0$, we have
\[ \I(\delta,\bar \nu) = \lim_{k \to \infty} u_k \ell_\delta(r(s_k)) = \lim_{k \to \infty} u_k(\ell_{\delta}(\gamma_k,s_k) + \ell_{\delta}(\gamma_{k+1},s_k)).\]
\end{prop}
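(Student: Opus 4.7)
The first equality is immediate from the definition of $\{u_k\}_{k\in\mathcal K}$ as a scaling sequence for $r(s_k)\to[\bar\nu]$. For the second, using \eqref{eq : length of d} it suffices to show
\[ \lim_{k \in \mathcal K} u_k \Big( \ell_\delta(\alpha, s_k) + \I(\delta, \gamma_k) + \I(\delta, \gamma_{k+1}) + \I(\delta, \alpha) \Big) = 0. \]
The plan is to prove $u_k=O(1/e_k)$, bound each term in the parenthesis in terms of $F_{1,k}, F_{2,k}, I(k), I(k+1)$, and combine using the growth conditions \eqref{eq : gr1} and \eqref{eq : gr2}.

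To control $\ell_\delta(\alpha,s_k)$ I unpack \eqref{eq : ld(g,t)}: Lemma~\ref{lem : lb for alpha} gives $\ell_\alpha(r(s_k))\ge F_{1,k}$, so by \eqref{Eq:collar width}, $w_\alpha(r(s_k)) = O(-\log F_{1,k})$; the triangle inequality for annular projections combined with Lemma~\ref{lem : tw alpha} gives $\tw_\alpha(\delta,r(s_k)) \le \tw_\alpha(\delta,Z) + F_{2,k}$, while Proposition~\ref{prop : properties of r} gives $\ell_\alpha(r(s_k))\le \ell_\alpha(Z)$, so $\ell_\alpha\cdot\tw_\alpha = O(F_{2,k})$. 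Altogether $\ell_\delta(\alpha,s_k) = O(F_{2,k} - 2\log F_{1,k} + 1)$. The intersection error terms are controlled by \eqref{eqn:intersection index change}: $\I(\delta,\gamma_k)\le\kappa I(k)$ and $\I(\delta,\gamma_{k+1})\le\kappa I(k+1)$, while $\I(\delta,\alpha)$ is a constant.

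To prove $u_k=O(1/e_k)$, since $u_k\ell_\delta(r(s_k))\to \I(\delta,\bar\nu)$ the sequence $u_k\ell_\delta(r(s_k))$ is bounded, so it suffices to show $\ell_\delta(r(s_k))\gtrsim e_k$. By \eqref{eq : length of d}, $\ell_\delta(r(s_k)) \ge \ell_\delta(\gamma_k,s_k) - O(I(k)+I(k+1)+1)$. Lemma~\ref{lem : est tw gamma} gives $\tw_{\gamma_k}(\delta,r(s_k))\ge e_k/2$ for $k$ large, and \eqref{eqn:intersection index change} together with $e_{k-2}\ge 4$ gives $\I(\delta,\gamma_k)\ge 4/\kappa$. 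Assuming the uniform lower bound $\ell_{\gamma_k}(r(s_k))\ge c_0>0$ addressed below, \eqref{eq : ld(g,t)} yields $\ell_\delta(\gamma_k,s_k)\ge (2c_0/\kappa)\,e_k$; the subtracted error is $o(e_k)$ by \eqref{eq : gr1}, so $\ell_\delta(r(s_k))\gtrsim e_k$ and hence $u_k=O(1/e_k)$. Combining with the bounds above,
\[u_k\Big(\ell_\delta(\alpha,s_k) + \I(\delta,\gamma_k) + \I(\delta,\gamma_{k+1}) + \I(\delta,\alpha)\Big) = O\!\left(\tfrac{F_{2,k}-2\log F_{1,k} + I(k) + I(k+1) + 1}{e_k}\right)\to 0\]
by \eqref{eq : gr1} and \eqref{eq : gr2}, proving the second equality.

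I expect the main obstacle to be the uniform lower bound $\ell_{\gamma_k}(r(s_k))\ge c_0$ for $s_k\in[t_k',t_{k+1}']$. Proposition~\ref{prop : monotonicity and separation} gives $|s_k-t_k|\ge D/8$, while part (2) of Lemma~\ref{lem : close to concatenation} gives $d_{\WP}(\hat r^{\bar k}(t_k),X_k)\le D/64$, and so $d_{\WP}(\hat r^{\bar k}(s_k), X_k)\ge 7D/64$ along the unit-speed geodesic $\hat r^{\bar k}$. Because $S_{\bar k}$ is a once-punctured torus, the WP completion stratum of $\oT{S_{\bar k}}$ where $\gamma_k$ is pinched consists of the single point $X_k$; the asymptotic $d_{\WP}(\,\cdot\,,X_k)\asymp \sqrt{2\pi\,\ell_{\gamma_k}}$ used in the proof of Lemma~\ref{lem : close to concatenation} then yields $\ell_{\gamma_k}(\hat r^{\bar k}(s_k))\ge c_0'>0$ uniformly in $k$ by $\Mod(S_{\bar k})$-equivariance. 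Continuity of length functions on $\oT{S}$ combined with the $\ep_1$-fellow-traveling of $r$ and $\hat r$ then transfers this to $\ell_{\gamma_k}(r(s_k))\ge c_0>0$, possibly after shrinking $\ep_1$.
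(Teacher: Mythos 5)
Your overall route is the same as the paper's: use (\ref{eq : length of d}), show $u_k \lmul 1/e_k$ by proving $\ell_\delta(\gamma_k,s_k)\gmul e_k$, and then kill the $\alpha$--term and the three $O(\I(\cdot,\cdot))$ error terms using Lemmas~\ref{lem : lb for alpha} and~\ref{lem : tw alpha}, the Collar Lemma, and the growth conditions (\ref{eq : gr1}) and (\ref{eq : gr2}). All of that part of your argument is correct and matches the paper's proof essentially line for line (the paper packages the estimate $w_{\gamma_k}+\ell_{\gamma_k}\tw_{\gamma_k}(\delta,\cdot)\asya \ell_{\gamma_k}e_k$ as Lemma~\ref{lem : k contribution 1}).

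The one genuine soft spot is exactly where you flagged it: the uniform lower bound $\ell_{\gamma_k}(r(s_k))\geq c_0$. Your argument correctly produces a uniform lower bound $\ell_{\gamma_k}(\hat r^{\bar k}(s_k))\geq c_0'$ from $d_{\WP}(\hat r^{\bar k}(s_k),X_k)\geq 7D/64$, but the transfer to $r(s_k)$ does not follow from $\ep_1$--fellow-traveling alone: a point at distance $\ep_1$ from a point where $\ell_{\gamma_k}\geq c_0'$ can have arbitrarily short $\gamma_k$ (indeed can lie on $\cS(\gamma_k)$) unless $\ep_1$ is smaller than roughly $\sqrt{\pi c_0'/2}$, and $\ep_1$ is fixed by Lemma~\ref{lem : ep1} and the construction of $r$ before $c_0'$ enters the picture, so you cannot simply ``shrink $\ep_1$'' at this stage. (One could repair this by rechoosing $Z$ closer to $\hat r(0)$, since $c_0'$ depends only on $D$, but that requires reorganizing the construction.) The paper's argument avoids this entirely: by Corollary~\ref{cor : general times r}, $\ell_{\gamma_{k+2}}(r(s_k))\leq C'$, and since $\I(\gamma_k,\gamma_{k+2})=1$ the Collar Lemma gives $w_{\gamma_k}(r(s_k))\leq \ell_{\gamma_{k+2}}(r(s_k))\leq C'$, which forces $\ell_{\gamma_k}(r(s_k))$ to be bounded below by a constant depending only on $C'$. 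Substituting that one step makes your proof complete.
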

For this, we will need the following lemma.
\begin{lem} \label{lem : k contribution 1} With notation as above,
\[ w_{\gamma_k}(r(s_k)) + \ell_{\gamma_k}(r(s_k)) \tw_{\gamma_k}(\delta,r(s_k)) \asya \ell_{\gamma_k}(r(s_k)) e_k,\]
where the constant in the coarse equation depends on $\delta$, but not on $k$.
\end{lem}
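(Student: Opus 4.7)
The plan is to establish uniform (in $k$) two-sided bounds on both $\ell_{\gamma_k}(r(s_k))$ and $w_{\gamma_k}(r(s_k))$, then combine these with the twist estimate from Lemma~\ref{lem : est tw gamma} and read off the additive coarse equation by a direct computation.

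First, the upper bound $\ell_{\gamma_k}(r(s_k)) \leq C'$ is given directly by Corollary~\ref{cor : general times r} (with $j = k$ and $t = s_k \in [t_k',t_{k+1}']$). Applying the same corollary with $j = k+2$ yields $\ell_{\gamma_{k+2}}(r(s_k)) \leq C'$ as well. Since $\gamma_k$ and $\gamma_{k+2}$ are two consecutive vertices of the $\cC(S_{\bar k})$--geodesic $\{\gamma_{\bar k + 2i}\}_i$, they both lie in the one-holed torus $S_{\bar k}$ and have $\I(\gamma_k,\gamma_{k+2}) \geq 1$ (in fact $\geq 2$ by elementary Farey geometry). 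The consequence of the Collar Lemma recorded after Lemma~\ref{L:collar} then gives
\[ \ell_{\gamma_{k+2}}(r(s_k)) \geq \I(\gamma_k,\gamma_{k+2})\, w_{\gamma_k}(r(s_k)), \]
so $w_{\gamma_k}(r(s_k)) \leq C'$ uniformly. By the defining formula~(\ref{Eq:collar width}), this upper bound on the width translates into a uniform positive lower bound on $\ell_{\gamma_k}(r(s_k))$.

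With these bounds in hand, I would invoke Lemma~\ref{lem : est tw gamma} (valid for all but finitely many $k$, which can be absorbed by enlarging the additive constant) to write $\tw_{\gamma_k}(\delta,r(s_k)) = e_k + \eta_k$ with $|\eta_k| \leq c(\delta)$. A direct algebraic rearrangement then yields
\[ w_{\gamma_k}(r(s_k)) + \ell_{\gamma_k}(r(s_k))\,\tw_{\gamma_k}(\delta,r(s_k)) - \ell_{\gamma_k}(r(s_k))\,e_k = w_{\gamma_k}(r(s_k)) + \ell_{\gamma_k}(r(s_k))\,\eta_k, \]
and the two summands on the right are bounded by $C'$ and $C'\,c(\delta)$, respectively. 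This gives the desired additive coarse equation with constant depending only on $\delta$.

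The main obstacle is the lower bound on $\ell_{\gamma_k}(r(s_k))$: without it, $w_{\gamma_k}(r(s_k))$ could grow like $-2\log\ell_{\gamma_k}(r(s_k))$ and overwhelm the target $\ell_{\gamma_k}(r(s_k))\,e_k$, breaking the additive estimate. Its resolution is the simultaneous application of Corollary~\ref{cor : general times r} to $\gamma_k$ and $\gamma_{k+2}$, exploiting the fact that these two neighboring Farey vertices lie in the same subsurface and must therefore intersect essentially.
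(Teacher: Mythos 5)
Your proof is correct and follows essentially the same route as the paper: bound $\ell_{\gamma_k}(r(s_k))$ and $\ell_{\gamma_{k+2}}(r(s_k))$ above via Corollary~\ref{cor : general times r}, use the Collar Lemma and the fact that $\gamma_k$, $\gamma_{k+2}$ intersect to get $\ell_{\gamma_k}(r(s_k)) \asym 1$ and $w_{\gamma_k}(r(s_k)) \asym 1$, then apply Lemma~\ref{lem : est tw gamma} for the twist. One tiny slip: consecutive convergents are Farey neighbors, so $\I(\gamma_k,\gamma_{k+2}) = 1$, not $\geq 2$ as your parenthetical claims — but since your argument only uses $\I(\gamma_k,\gamma_{k+2}) \geq 1$, this is harmless.
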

\begin{proof} By Corollary~\ref{cor : general times r},
\begin{center}$ \ell_{\gamma_k}(r(s_k))\leq C'$ and $\ell_{\gamma_{k+2}}(r(s_k)) \leq C'$. \end{center}
Then since $\I(\gamma_k,\gamma_{k+2}) = 1$, $\ell_{\gamma_k}(r(s_k))$ is also uniformly bounded below,  for otherwise, by Lemma~\ref{L:collar}, $\ell_{\gamma_{k+2}}(r(s_k))$ would be  unbounded. So we have  $\ell_{\gamma_k}(r(s_k)) \asym 1$ and hence again by Lemma~\ref{L:collar} we have $ w_{\gamma_k}(r(s_k)) \asym 1$.
Moreover, by Lemma~\ref{lem : est tw gamma} we have $\tw_{\gamma_k}(\delta,r(s_k)) \asya e_k$, and so the lemma follows.
\end{proof}

\begin{proof}[Proof of Proposition~\ref{prop : relevant terms}.] First, observe that by Corollary~\ref{cor : rel small intersect} (and since $\alpha$ is a fixed curve and $e_k \to \infty$), we have
\begin{equation} \label{eq : big O terms} \lim_{k \to \infty} \frac{\I(\delta,\gamma_k)}{e_k} =0,\; \lim_{k \to \infty} \frac{\I(\delta,\gamma_{k+1})}{e_k} =0, \;\text{and}\; \lim_{k \to \infty}\frac{\I(\delta,\alpha)}{e_k} = 0.
\end{equation}  
As in the proof of Lemma~\ref{lem : k contribution 1}, $\ell_{\gamma_k}(r(s_k)) \asym 1$, and so $\ell_{\gamma_k}(r(s_k)) e_k \to \infty$ and $\I(\delta,\gamma_k) \to \infty$.  From Lemma~\ref{lem : k contribution 1} and (\ref{eq : ld(g,t)}), we have $\ell_{\delta}(\gamma_k,s_k) \gmul e_k$.  
Moreover, $u_k \ell_\delta(r(s_k)) \to \I(\delta,\bar \nu) > 0$, then by (\ref{eq : length of d}), $u_k \lmul \tfrac1{e_k}$. Combining this with (\ref{eq : big O terms}) and appealing to (\ref{eq : length of d}) again, we see that
\[ i(\delta,\bar \nu) = \lim_{k \to \infty} u_k \ell_\delta(r(s_k)) = \lim_{k \to \infty} u_k(\ell_{\delta}(\gamma_k,s_k) + \ell_{\delta}(\gamma_{k+1},s_k) + \ell_{\delta}(\alpha,s_k)).\]

By similar reasoning, to eliminate the last term (and thus prove the proposition), it suffices to prove
\begin{equation} \label{eq : eliminate alpha} \lim_{k \to \infty} \frac{\ell_{\delta}(\alpha,s_k)}{e_k} = 0.
\end{equation}

To do this, first note that by Lemma~\ref{lem : lb for alpha}, $\ell_\alpha(r(s_k))\geq f_1(s_k) \geq F_{1,k}$, and so by Lemma~\ref{L:collar} we have
\[ w_{\alpha}(r(s_k)) \asya -2\log (\ell_\alpha(r(s_k))) \leq -2 \log(F_{1,k}). \]
By Lemma~\ref{lem : tw alpha}, we also have
\[ \tw_{\alpha}(\delta,r(s_k)) \asya d_\alpha(r(0),r(s_k)) \leq f_2(s_k) \leq F_{2,k},\]
where the additive constant depends on $\delta$.
Therefore, since $F_{1,k} \ladd 1$, and since $\ell_{\alpha}(r(s_k))$ is uniformly bounded by Proposition~\ref{prop : properties of r}, we have
\[ \ell_{\delta}(\alpha,s_k) \ladd \I(\delta,\alpha) \Big(-2 \log(F_{1,k})+ F_{2,k} \Big), \] 
with additive error that again depends on $\delta$.
By our growth condition (\ref{eq : gr2}), since $\I(\delta,\alpha)$ does not depend on $k$, there is a constant $c' > 0$ so that 
\[ \lim_{k \to \infty} \frac{\ell_{\delta}(\alpha,s_k)}{e_k} \leq  \lim_{k \to \infty} \frac{\I(\delta,\alpha) \Big(-2 \log(F_{1,k})+ F_{2,k} \Big)+c'}{e_k} = 0.\]
This proves (\ref{eq : eliminate alpha}), and hence the proposition.
\end{proof}

Continue to let $\{s_k\}_{k \in \mathcal K}$ be a sequence with $s_k \in [t_k',t_{k+1}']$ as above, and suppose that $r(s_k) \to [\bar \nu]$ as $k\to\infty$ in the Thurston compactification and that $\{u_k\}_k$ is a scaling sequence.  We set
\[ x(s_k) = w_{\gamma_k}(r(s_k)) + \ell_{\gamma_k}(r(s_k)) \tw_{\gamma_k}(\gamma_{\bar k},r(s_k)),\]
and
\[ y(s_k) = w_{\gamma_{k+1}}(r(s_k)) + \ell_{\gamma_{k+1}}(r(s_k)) \tw_{\gamma_{k+1}}(\gamma_{\overline{k+1}},r(s_k)).\]
\begin{lem} \label{lem : estimating coeffs} For any $\delta \in \cC_0(S)$ with $\I(\delta,\alpha) \neq 0$, we have
\[ \lim_{k \to \infty} \frac{x(s_k) \I(\delta,\gamma_k) + y(s_k) \I(\delta,\gamma_{k+1})}{\ell_{\gamma_k}(\delta,s_k) + \ell_{\gamma_{k+1}}(\delta,s_k)} = 1.\]
For $s_k = t_k'$, we have
\[ \lim_{k \to \infty} \frac{x(t_k') \I(\delta,\gamma_k)}{\ell_{\gamma_k}(\delta,t_k') + \ell_{\gamma_{k+1}}(\delta,t_k')} = 1.\]
\end{lem}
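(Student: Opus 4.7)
The plan is to compare the quantities $x(s_k)\I(\delta,\gamma_k)$ and $y(s_k)\I(\delta,\gamma_{k+1})$ directly to $\ell_\delta(\gamma_k,s_k)$ and $\ell_\delta(\gamma_{k+1},s_k)$, using (\ref{eq : ld(g,t)}). The two expressions differ only in the twisting terms: $\tw_{\gamma_k}(\gamma_{\bar k},r(s_k))$ versus $\tw_{\gamma_k}(\delta,r(s_k))$ (and the analogous pair for $\gamma_{k+1}$). By the triangle inequality for annular projections,
\[\bigl|\tw_{\gamma_k}(\gamma_{\bar k},r(s_k))-\tw_{\gamma_k}(\delta,r(s_k))\bigr|\ladd d_{\gamma_k}(\gamma_{\bar k},\delta),\]
and this last quantity is bounded uniformly in $k$ by a constant depending only on $\delta$ (this is implicit in the proof of Lemma~\ref{lem : est tw gamma}). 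Since $\ell_{\gamma_k}(r(s_k))\asym 1$ as observed in the proof of Lemma~\ref{lem : k contribution 1}, multiplying by $\I(\delta,\gamma_k)$ gives
\[\bigl|x(s_k)\I(\delta,\gamma_k)-\ell_\delta(\gamma_k,s_k)\bigr|\lmul \I(\delta,\gamma_k),\]
and the analogous estimate holds for the $\gamma_{k+1}$-terms.

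For part (1), this yields
\[\bigl|\bigl(x(s_k)\I(\delta,\gamma_k)+y(s_k)\I(\delta,\gamma_{k+1})\bigr)-\bigl(\ell_\delta(\gamma_k,s_k)+\ell_\delta(\gamma_{k+1},s_k)\bigr)\bigr|\lmul \I(\delta,\gamma_k)+\I(\delta,\gamma_{k+1}).\]
On the other hand, Lemma~\ref{lem : k contribution 1} (with $\gamma_{\bar k}$ playing the role of $\delta$, an identical argument applies) shows $x(s_k)\asym e_k$, so the denominator satisfies
\[\ell_\delta(\gamma_k,s_k)+\ell_\delta(\gamma_{k+1},s_k)\gmul e_k\,\I(\delta,\gamma_k).\]
Hence the ratio of the difference to the denominator is at most
\[\frac{\I(\delta,\gamma_k)+\I(\delta,\gamma_{k+1})}{e_k\,\I(\delta,\gamma_k)}\lmul \frac{1}{e_k}+\frac{I(k+1)}{e_k\,e_{k-2}},\]
using (\ref{eqn:intersection index change}) in the second fraction. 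Both terms tend to $0$ by the growth condition (\ref{eq : gr1}) and the fact that $e_{k-2}\geq 4$, so the limit of numerator over denominator is $1$.

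For part (2), it suffices (given part (1)) to show that
\[\lim_{k\to\infty}\frac{y(t_k')\,\I(\delta,\gamma_{k+1})}{\ell_\delta(\gamma_k,t_k')+\ell_\delta(\gamma_{k+1},t_k')}=0.\]
At the time $t_k'$, the second coarse equation of Lemma~\ref{lem : est tw gamma} (applied with $\gamma_{\overline{k+1}}$ in place of $\delta$---the proof goes through verbatim, since $\gamma_{\overline{k+1}}\in\{\gamma_0,\gamma_1\}$ is one of two fixed curves) gives $\tw_{\gamma_{k+1}}(\gamma_{\overline{k+1}},r(t_k'))\asya 1$. Together with the bounds $w_{\gamma_{k+1}}(r(t_k'))\asym 1$ and $\ell_{\gamma_{k+1}}(r(t_k'))\asym 1$ (Corollary~\ref{cor : special times r} and Lemma~\ref{L:collar}), this yields $y(t_k')\asym 1$. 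Combined with the bounds $\I(\delta,\gamma_{k+1})\leq \kappa I(k+1)$, the denominator bound $\ell_\delta(\gamma_k,t_k')\gmul e_k\,\I(\delta,\gamma_k)\gmul e_k\,e_{k-2}$ from above, and the growth condition (\ref{eq : gr1}), we conclude that the ratio is $\lmul I(k+1)/(e_k\,e_{k-2})\to 0$, completing the proof.

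The main obstacle is the careful bookkeeping to show that the $\gamma_{k+1}$-contribution at time $t_k'$ is negligible relative to the $\gamma_k$-contribution; this is where the asymmetry between the two parts of the lemma shows up and where the sharp estimate $\tw_{\gamma_{k+1}}(\gamma_{\overline{k+1}},r(t_k'))\asya 1$ (rather than $\asya e_{k+1}$) is essential.
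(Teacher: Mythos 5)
Your proof is correct, and while it uses exactly the same inputs as the paper (Lemma~\ref{lem : est tw gamma}, Lemma~\ref{lem : k contribution 1}, Corollary~\ref{cor : special times r}, and the growth condition (\ref{eq : gr1})), it organizes the estimate differently in a way that genuinely simplifies the argument. The paper proves the first limit by showing the two ratios $x(s_k)\I(\delta,\gamma_k)/\ell_\delta(\gamma_k,s_k)$ and $y(s_k)\I(\delta,\gamma_{k+1})/\ell_\delta(\gamma_{k+1},s_k)$ each tend to $1$; since the second ratio need not converge to $1$ when $y(s_k)$ stays bounded (the additive twisting error is then not multiplicatively negligible), the paper is forced into a case split --- $y(s_k)$ bounded versus $y(s_k)\to\infty$ --- after passing to a subsequence, disposing of the bounded case by showing both $\gamma_{k+1}$--terms are $o(e_k)$. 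You instead bound the \emph{additive} difference between numerator and denominator by a constant multiple (depending on $\delta$) of $\I(\delta,\gamma_k)+\I(\delta,\gamma_{k+1})$, using that the two expressions differ only through the uniformly bounded twisting discrepancy $d_{\gamma_k}(\gamma_{\bar k},\delta)$ (the same fact, inequality (\ref{eqn : bounded depending on delta}), that underlies the paper's proof of Lemma~\ref{lem : est tw gamma}), and then divide by the denominator, which is $\gmul e_k\,\I(\delta,\gamma_k)\gmul e_k\,e_{k-2}$ by Lemma~\ref{lem : k contribution 1} and (\ref{eqn:intersection index change}). This renders the case analysis unnecessary and handles all subsequences at once. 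Your second part --- establishing $y(t_k')\lmul 1$ and killing that term against the denominator --- is essentially the paper's Case 1 specialized to $s_k=t_k'$; your parenthetical that Lemma~\ref{lem : est tw gamma} applies to $\gamma_{\overline{k+1}}$ despite $\I(\gamma_{\overline{k+1}},\alpha)=0$ is the right observation (the paper's own proof relies on the equivalent coarse equation $\tw_{\gamma_{k+1}}(\delta,\cdot)\asya\tw_{\gamma_{k+1}}(\gamma_{\overline{k+1}},\cdot)$). What your version buys is the elimination of the subsequence and case split, at no cost in generality.
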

\begin{proof}  As in the proof of Lemma~\ref{lem : est tw gamma}
\[ \tw_{\gamma_k}(\delta,r(s_k)) \asya \tw_{\gamma_k}(\gamma_{\bar k},r(s_k))\]
and
\[ \tw_{\gamma_{k+1}}(\delta,r(s_k)) \asya \tw_{\gamma_k}(\gamma_{\overline{k+1}},r(s_k)) \]
where the implicit constant in these coarse equations depends on $\delta$.

According to Corollary~\ref{cor : general times r}, $\ell_{\gamma_k}(r(s_k)) \leq C'$.  From the preceding coarse equations and Lemma~\ref{lem : k contribution 1}, we have
\begin{equation}\label{eq : x estimate}
 x(s_k) \asya w_{\gamma_k}(r(s_k)) + \ell_{\gamma_k}(r(s_k)) \tw_{\gamma_k}(\delta,r(s_k))  \asya \ell_{\gamma_k}(r(s_k)) e_k.
  \end{equation}
Since $e_k \to \infty$ as $k \to \infty$, the following is immediate:
\begin{equation} \label{eq : asymptotic x} \lim_{k \to \infty} \frac{x(s_k) \I(\delta,\gamma_k)}{\ell_\delta(\gamma_k,s_k)} = \lim_{k \to \infty} \frac{x(s_k)}{w_{\gamma_k}(r(s_k)) + \ell_{\gamma_k}(r(s_k)) \tw_{\gamma_k}(\delta,r(s_k)) } = 1.
\end{equation}

%By that same corollary, we know that $\ell_{\gamma_{k+2}}(r(s_k)) \leq C'$, and since $\I(\gamma_k,\gamma_{k+2}) = 1$, by the Collar Lemma, $\ell_{\gamma_k}(r(s_k))$ is also uniformly bounded.  By Lemma~\ref{lem : est tw gamma} we have $\tw_{\gamma_k}(\delta,r(s_k)) \asya e_k$, and so $\lim_{k \to \infty} x(s_k) = \infty$, hence
%\[ \lim_{k \to \infty} \frac{x(s_k) \I(\delta,\gamma_k)}{\ell_\delta(\gamma_k,r(s_k))} = \lim_{k \to \infty} \frac{x(s_k)}{w_{\gamma_k}(r(s_k)) + \ell_{\gamma_k}(r(s_k)) \tw_{\gamma_k}(\delta,r(s_k)) } = 1.\]

Similar to (\ref{eq : x estimate}) we have 
\begin{equation} \label{eq : y estimate} 
y(s_k) \asya w_{\gamma_{k+1}}(r(s_k)) + \ell_{\gamma_{k+1}}(r(s_k)) \tw_{\gamma_{k+1}}(\delta,r(s_k)) = \tfrac{\ell_{\gamma_{k+1}}(\delta,s_k)}{\I(\delta,\gamma_{k+1})}. 
\end{equation}
By (\ref{eqn:intersection index change}) and the growth condition (\ref{eq : gr1}) we have
\[ \lim_{k \to \infty} \frac{\I(\delta,\gamma_{k+1})}{e_k} = 0.\]
After passing to a subsequence, there are two cases to consider:

\medskip
\noindent{\bf Case 1.} There exists $R >0$ so that $y(s_k) \leq R$ for all $k$.\\

In this case, appealing to (\ref{eq : big O terms}) and (\ref{eq : y estimate}) we have
\[ 0 = \lim_{k \to \infty} \frac{y(s_k) \I(\delta,\gamma_{k+1})}{e_k} = \lim_{k \to \infty} \frac{\ell_{\gamma_{k+1}}(\delta,s_k)}{e_k},\]
and thus
\begin{eqnarray*}
\lim_{k \to \infty} \frac{x(s_k) \I(\delta,\gamma_k) + y(s_k) \I(\delta,\gamma_{k+1})}{\ell_{\gamma_k}(\delta,s_k) + \ell_{\gamma_{k+1}}(\delta,s_k)} & = &  \lim_{k \to \infty} \frac{\tfrac{x(s_k) \I(\delta,\gamma_k)}{e_k} + \tfrac{y(s_k) \I(\delta,\gamma_{k+1})}{e_k}}{\tfrac{\ell_{\gamma_k}(\delta,s_k)}{e_k} + \tfrac{\ell_{\gamma_{k+1}}(\delta,s_k)}{e_k}}\\ 
& = & \lim_{k \to \infty} \frac{x(s_k) \I(\delta,\gamma_k)}{\ell_{\gamma_k}(\delta,s_k)} = 1.
\end{eqnarray*}

\noindent{\bf Case 2.} $\displaystyle{\lim_{k \to \infty} y(s_k) = \infty}$.\\

Here, we can argue as for $x(s_k)$, appealing to (\ref{eq : y estimate}) to deduce that
\[ \lim_{k \to \infty} \frac{y(s_k) \I(\delta,\gamma_{k+1})}{\ell_{\gamma_{k+1}}(\delta,s_k)} = 1.\]
Combined with (\ref{eq : asymptotic x}) we have
\[ \lim_{k \to \infty} \frac{x(s_k) \I(\delta,\gamma_k) + y(s_k) \I(\delta,\gamma_{k+1})}{\ell_{\gamma_k}(\delta,s_k) + \ell_{\gamma_{k+1}}(\delta,s_k)} = 1.\]

These two cases prove the first claim of the lemma.  
For the second claim, when $s_k = t_k'$, we note that by Corollary~\ref{cor : special times r} we have
\[ \ell_{\gamma_{k-1}}(r(t_k')), \ell_{\gamma_{k+1}}(r(t_k')) \leq C',\]
and so by Lemma~\ref{L:collar} (as in the proof of Lemma~\ref{lem : k contribution 1}) we have
\[ w_{\gamma_{k+1}}(r(t_k')) \asym 1 \quad \mbox{ and } \quad \ell_{\gamma_{k+1}}(r(t_k')) \asym 1 \]
so since $\tw_{\gamma_{k+1}}(\delta,r(t_k')) \asya 1$ by Lemma~\ref{lem : est tw gamma}, it follows that $y(t_k')$ is uniformly bounded, and thus as in Case 1, we deduce 
\[ \lim_{k \to \infty} \frac{x(t_k') \I(\delta,\gamma_k)}{\ell_{\gamma_k}(\delta,t_k') + \ell_{\gamma_{k+1}}(\delta,t_k')} = 1,\]
completing the proof.
\end{proof}

We are now ready for the
\begin{proof}[Proof of Theorem \ref{thm : limit of r}]   First, we show that $[\bar \lambda_0]$ and $[\bar \lambda_1]$ are in the limit set $\Lambda$ of $r$.  
Consider the sequence of times $\{t_{2k}'\}$ and pass to a subsequence so that $r(t_{2k}') \to [\bar \nu]$ in the Thurston compactification and let $\{u_k\}$ be a scaling sequence for $r(t_{2k}')$.
Let $\delta$ be any curve with $\I(\delta,\bar \nu) \neq 0$ and $\I(\delta,\alpha) \neq 0$.
By the second part of Lemma~\ref{lem : estimating coeffs}, together with Proposition~\ref{prop : relevant terms} we have
\begin{eqnarray*} 1 & = & \lim_{k \to \infty} \frac{x(t_{2k}') \I(\delta,\gamma_{2k})}{\ell_{\gamma_{2k}}(\delta,t_{2k}') + \ell_{\gamma_{2k+1}}(\delta,t_{2k}')}\\
&=&  \lim_{k \to \infty} \frac{u_k x(t_{2k}') \I(\delta,\gamma_{2k})}{u_k(\ell_{\gamma_{2k}}(\delta,t_{2k}') + \ell_{\gamma_{2k+1}}(\delta,t_{2k}'))}\\
& = & \frac{\displaystyle{\lim_{k \to \infty} \I(\delta,u_k x(t_{2k}') \gamma_{2k})}}{\I(\delta,\bar \nu)}.
\end{eqnarray*}
Therefore $\displaystyle{\lim_{k \to \infty} \I(\delta, u_k x(t_{2k}') \gamma_{2k}) = \I(\delta,\bar \nu)}$.
We apply this to a set of curves $\delta_1,\ldots, \delta_N$ sufficient for determining a measured lamination (see $\S$\ref{sec:prelim}), 
and so deduce that $\displaystyle{\lim_{k \to \infty} u_k x(t_{2k}') \gamma_{2k} = \bar \nu}$.

On the other hand, $[\gamma_{2k}] \to [\bar \lambda_0]$, hence $[\bar \nu] = [\bar \lambda_0]$, and so $[\bar \lambda_0]$ is in $\Lambda$.  
A similar argument using the sequence $\{t_{2k+1}'\}$ shows that $[\bar \lambda_1] \in \Lambda$.

Now suppose that $\{s_k\}_k$ is an arbitrary sequence so that $r(s_k) \to [\bar \nu]$ and let $\{u_k\}_k$ be a scaling sequence.
Adjusting indices and passing to a subsequence we can assume that $s_k \in [t_k',t_{k+1}']$ for all $k \in \mathcal K$ (some subset $\mathcal K \subseteq \mathbb N$).  
Passing to a further subsequence, if necessary, we may assume that $\mathcal{K}$ is either a subsequence of even integers or odd integers.
Arguing as above, appealing to the first part of Lemma~\ref{lem : estimating coeffs} and Proposition~\ref{prop : relevant terms} we have
\begin{eqnarray*} 
1 & = & \lim_{k \to \infty} \frac{x(s_k) \I(\delta,\gamma_k) + y(s_k)\I(\delta,\gamma_{k+1})}{\ell_{\gamma_k}(\delta,s_k) + \ell_{\gamma_{k+1}}(\delta,s_k)} \\
& = & \lim_{k \to \infty} \frac{u_k x(s_k) \I(\delta,\gamma_k) + u_k y(s_k)\I(\delta,\gamma_{k+1})}{u_k(\ell_{\gamma_k}(\delta,s_k) + \ell_{\gamma_{k+1}}(\delta,s_k))}\\
& = & \frac{\displaystyle{\lim_{k \to \infty} \I\big(\delta,u_k (x(s_k) \gamma_k + y(s_k) \gamma_{k+1})\big)}}{\I(\delta,\bar \nu)}.
\end{eqnarray*}
So, $\displaystyle{\lim_{k \to \infty} u_k (x(s_k) \gamma_k + y(s_k) \gamma_{k+1}) = \I(\delta,\bar \nu)}$, and as above
\[ \bar \nu = \lim_{k \to \infty} u_k(x(s_k) \gamma_k + y(s_k) \gamma_{k+1}) = \lim_{k \to \infty} u_kx(s_k) \gamma_k + \lim_{k \to \infty} u_k y(s_k) \gamma_{k+1}.\] 
Now, if $\mathcal{K}$ is a subset of even integers, then since the projective classes of the curves with even indices converge to $[\bar\lambda_0]$, 
the first limit on the right hand-side above is a multiple of $\bar\lambda_0$, and since the projective classes of the curves with odd indices converge to $[\bar\lambda_1]$
 the second limit above is a multiple of $\bar\lambda_1$, and hence
$[\bar \nu] \in [[\bar \lambda_0],[\bar \lambda_1]]$. When $\mathcal{K}$ is a subset of odd integers we have a similar conclusion.
This implies that $\Lambda$ is contained in $[[\bar \lambda_0],[\bar \lambda_1]]$.  Since $\Lambda$ contains the endpoints and is connected, it is the entire $1$--simplex, as was desired.
\end{proof}
\bibliographystyle{amsalpha}
\bibliography{reference}

\providecommand{\bysame}{\leavevmode\hbox to3em{\hrulefill}\thinspace}
\providecommand{\MR}{\relax\ifhmode\unskip\space\fi MR }
% \MRhref is called by the amsart/book/proc definition of \MR.
\providecommand{\MRhref}[2]{%
  \href{http://www.ams.org/mathscinet-getitem?mr=#1}{#2}
}
\providecommand{\href}[2]{#2}
\begin{thebibliography}{BLMR16b}

\bibitem[BLMR16a]{nue2}
Jeffrey Brock, Christopher Leininger, Babak Modami, and Kasra Rafi, \emph{Limit
  sets of {T}eicm\"{u}ller geodesics with minimal non-uniquely vertical
  laminations, {II}}, {J.} {R}eine {A}ngew. {M}ath. to appear.
  ar{X}iv:1601.03368 (2016).

\bibitem[BLMR16b]{wplimit}
\bysame, \emph{Limit sets of {W}eil-{P}etersson geodesics}, Int. Math. Res.
  Not. (IMRN) to appear, ar{X}iv:1611.02197 (2016).

\bibitem[BM07]{brockmargalit}
Jeffrey Brock and Dan Margalit, \emph{Weil-{P}etersson isometries via the pants
  complex}, Proc. Amer. Math. Soc. \textbf{135} (2007), no.~3, 795--803.

\bibitem[BM15]{wprecurnue}
Jeffrey Brock and Babak Modami, \emph{Recurrent {W}eil-{P}etersson geodesic
  rays with non-uniquely ergodic ending laminations}, Geom. Topol. \textbf{19}
  (2015), no.~6, 3565--3601.

\bibitem[BMM10]{bmm1}
Jeffrey Brock, Howard Masur, and Yair Minsky, \emph{Asymptotics of
  {W}eil-{P}etersson geodesics. {I}. {E}nding laminations, recurrence, and
  flows}, Geom. Funct. Anal. \textbf{19} (2010), no.~5, 1229--1257.

\bibitem[BMM11]{bmm2}
\bysame, \emph{Asymptotics of {W}eil-{P}etersson geodesics {II}: bounded
  geometry and unbounded entropy}, Geom. Funct. Anal. \textbf{21} (2011),
  no.~4, 820--850.

\bibitem[Bro03]{br}
Jeffrey~F. Brock, \emph{The {W}eil-{P}etersson metric and volumes of
  3-dimensional hyperbolic convex cores}, J. Amer. Math. Soc. \textbf{16}
  (2003), no.~3, 495--535.

\bibitem[Bus10]{buser}
Peter Buser, \emph{Geometry and spectra of compact {R}iemann surfaces}, Modern
  Birkh\"auser Classics, Birkh\"auser Boston Inc., Boston, MA, 2010, Reprint of
  the 1992 edition.

\bibitem[CMW14]{nuechaikaetl}
Jon Chaika, Howard Masur, and Michael Wolf, \emph{Limits in {PMF} of
  {T}eichm\"{u}ller geodesics}, J. Reine Angew. Math. to appear,
  arXiv:1406.0564 (2014).

\bibitem[CRS08]{lineminimateichgeod}
Young-Eun Choi, Kasra Rafi, and Caroline Series, \emph{Lines of minima and
  {T}eichm\"uller geodesics}, Geom. Funct. Anal. \textbf{18} (2008), no.~3,
  698--754.

\bibitem[DW03]{dwwp}
Georgios Daskalopoulos and Richard Wentworth, \emph{Classification of
  {W}eil-{P}etersson isometries}, Amer. J. Math. \textbf{125} (2003), no.~4,
  941--975.

\bibitem[FLP79]{FLP}
A.~Fathi, F.~Laudenbach, and V.~Poenaru, \emph{Travaux de {T}hurston sur les
  surfaces}, Ast{\'e}risque No. 66-67 (1979), 1--286.

\bibitem[Ham15]{Hamen-teich-wp}
Ursula Hamenst{\"a}dt, \emph{{W}eil-{P}etersson flow and {T}eichm\"{u}ller
  flow}, arXiv:1505.01113 (2015).

\bibitem[Ker80]{Kerck-asymp-teich}
Steven~P. Kerckhoff, \emph{The asymptotic geometry of {T}eichm\"uller space},
  Topology \textbf{19} (1980), no.~1, 23--41.

\bibitem[Khi64]{Khinchin-contfrac}
A.~Ya. Khinchin, \emph{Continued fractions}, The University of Chicago Press,
  Chicago, Ill.-London, 1964.

\bibitem[Len08]{lenzhen}
Anna Lenzhen, \emph{Teichm\"uller geodesics that do not have a limit in
  {${\mathcal{PMF}}$}}, Geom. Topol. \textbf{12} (2008), no.~1, 177--197.

\bibitem[LLR13]{nonuniqueerg}
Christopher Leininger, Anna Lenzhen, and Kasra Rafi, \emph{Limit sets of
  {T}eichm\"{u}ller geodesics with minimal non-uniquely ergodic vertical
  foliation}, {J.} {R}eine {A}ngew. {M}ath. to appear. arXiv: 1312.2305 (2013).

\bibitem[LM10]{lenmasdivergence}
Anna Lenzhen and Howard Masur, \emph{Criteria for the divergence of pairs of
  {T}eichm\"uller geodesics}, Geom. Dedicata \textbf{144} (2010), 191--210.

\bibitem[LMR16]{2dlimit}
Anna Lenzhen, Babak Modami, and Kasra Rafi, \emph{Teichmueller geodesics with
  d-dimensional limit sets}, {J.} {M}od {D}yn. to appear. arXiv:1608.07945
  (2016).

\bibitem[LRT15]{Shadow}
Anna Lenzhen, Kasra Rafi, and Jing Tao, \emph{The shadow of a {T}hurston
  geodesic to the curve graph}, J. Topol. \textbf{8} (2015), no.~4, 1085--1118.

\bibitem[Mas76]{maswp}
Howard Masur, \emph{Extension of the {W}eil-{P}etersson metric to the boundary
  of {T}eichmuller space}, Duke Math. J. \textbf{43} (1976), no.~3, 623--635.

\bibitem[Mas82]{2bdriesteich}
\bysame, \emph{Two boundaries of {T}eichm\"uller space}, Duke Math. J.
  \textbf{49} (1982), no.~1, 183--190.

\bibitem[Mas85]{maskit}
Bernard Maskit, \emph{Comparison of hyperbolic and extremal lengths}, Ann.
  Acad. Sci. Fenn. Ser. A I Math. \textbf{10} (1985), 381--386. \MR{802500}

\bibitem[Min96]{geometricccplx}
Yair~N. Minsky, \emph{A geometric approach to the complex of curves on a
  surface}, Topology and {T}eichm\"uller spaces ({K}atinkulta, 1995), World
  Sci. Publ., River Edge, NJ, 1996, pp.~149--158. \MR{1659683}

\bibitem[MM99]{mm1}
Howard~A. Masur and Yair~N. Minsky, \emph{Geometry of the complex of curves.
  {I}. {H}yperbolicity}, Invent. Math. \textbf{138} (1999), no.~1, 103--149.

\bibitem[MM00]{mm2}
H.~A. Masur and Y.~N. Minsky, \emph{Geometry of the complex of curves. {II}.
  {H}ierarchical structure}, Geom. Funct. Anal. \textbf{10} (2000), no.~4,
  902--974.

\bibitem[Mod15]{wpbehavior}
Babak Modami, \emph{Prescribing the behavior of {W}eil--{P}etersson geodesics
  in the moduli space of {R}iemann surfaces}, J. Topol. Anal. \textbf{7}
  (2015), no.~4, 543--676.

\bibitem[Mod16]{asympdiv}
\bysame, \emph{Asymptotics of a class of {W}eil--{P}etersson geodesics and
  divergence of {W}eil--{P}etersson geodesics}, Algebr. Geom. Topol.
  \textbf{16} (2016), no.~1, 267--323.

\bibitem[Ser85]{modsurf}
Caroline Series, \emph{The modular surface and continued fractions}, J. London
  Math. Soc. (2) \textbf{31} (1985), no.~1, 69--80.

\bibitem[Wol03]{wols}
Scott~A. Wolpert, \emph{Geometry of the {W}eil-{P}etersson completion of
  {T}eichm\"uller space}, Surveys in differential geometry, {V}ol.\ {VIII}
  ({B}oston, {MA}, 2002), Surv. Differ. Geom., VIII, Int. Press, Somerville,
  MA, 2003, pp.~357--393.

\bibitem[Wol08]{wolb}
\bysame, \emph{Behavior of geodesic-length functions on {T}eichm\"uller space},
  J. Differential Geom. \textbf{79} (2008), no.~2, 277--334.

\bibitem[Wol10]{wol}
\bysame, \emph{Families of {R}iemann surfaces and {W}eil-{P}etersson geometry},
  CBMS Regional Conference Series in Mathematics, vol. 113, Published for the
  Conference Board of the Mathematical Sciences, Washington, DC, 2010.

\end{thebibliography}
\end{document}